\newcommand{\beas}{\begin{eqnarray*}}
\newcommand{\eeas}{\end{eqnarray*}}
\newcommand{\bes} {\begin{equation*}}
\newcommand{\ees} {\end{equation*}}
\newcommand{\be} {\begin{equation}}
\newcommand{\ee} {\end{equation}}
\newcommand{\bea} {\begin{eqnarray}}
\newcommand{\eea} {\end{eqnarray}}
\newcommand{\R}{\mathbb R}
\newcommand{\C}{\mathbb C}
\newcommand{\Z}{\mathbb Z}%
\newcommand{\N}{\mathbb N}
\newcommand{\X}{\mathbb{X}}
\newcommand{\what}{\widehat}
\newcommand{\wtilde}{\widetilde}
\newcommand{\ol}{\overline}
\newcommand{\mf}{\mathfrak}
\newtheorem{theorem}{Theorem}[section]
\newtheorem{lemma}[theorem]{Lemma}
\newtheorem{proposition}[theorem]{Proposition}
\newtheorem{corollary}[theorem]{Corollary}
\theoremstyle{definition}
\theoremstyle{definition}
\newtheorem{remark}[theorem]{Remark}
\numberwithin{equation}{section}
\begin{document}
		\title{$L^p$-boundedness of Pseudo differential operators on rank one Riemannian symmetric spaces of noncompact type}

\author{Sanjoy Pusti, Tapendu Rana}	

\address{Department of Mathematics, IIT Bombay, Powai, Mumbai-400076, India}
\email{sanjoy@math.iitb.ac.in, tapendurana@gmail.com}

\begin{abstract}
The aim of this paper is to study $L^p$-boundedness property of the pseudo differential operator associated with a symbol, on rank one Riemannian symmetric spaces of noncompact type, where the symbol satisfies H\"ormander-type conditions near infinity.  
\end{abstract}

\subjclass[2010]{Primary 43A85; Secondary 22E30}

\keywords{Pseudo differential operator, multiplier, Riemannian symmetric spaces}
\thanks{Second author  is supported by a Ph.D fellowship of the CSIR(India).}
\maketitle

\section{\textbf{Introduction}}

A differential operator $p(x, D)=\sum_{|\alpha|\leq m} a_{\alpha}(x) \frac{\partial^\alpha}{\partial x^\alpha}$ can be represented as 
\bes
p(x, D) f(x)=\int_{\R^n} e^{2\pi i x\cdot\xi} p(x, \xi)\mathcal F f(\xi)\,d\xi,
\ees
where $p(x, \xi)=\sum_{|\alpha|\leq m} a_\alpha(x) \xi^\alpha$ and $\mathcal F f$ is the  Fourier transform of $f$  defined by \begin{equation*}
\mathcal F f(\xi) = \int_{\R^n} f(x)e^{-2\pi i x\cdot \xi}  dx.
		 \end{equation*}
Let $a(x, \xi)$ be a general function on $\R^n \times \R^n$ not necessarily a polynomial in the $\xi$ variable. Consider the operator $a(x, D)$ defined by
\bes
a(x, D)f(x)=\int_{\R^n} e^{2\pi i x\cdot\xi} a(x, \xi)\mathcal F f(\xi)\,d\xi.
\ees	
This operator $ a(x,D) $ is called pseudo differential operator associated with the `symbol' $a(x, \xi)$. Pseudo differential operators have many applications in the theory of partial differential operators. If the symbol $a(x, \xi)$ is independent 	in $x$ variable, say $a(x, \xi)=m(\xi)$, then the associated operator 
\bes
m(D)f(x)=\int_{\R^n} e^{2\pi i x\cdot\xi} m(\xi)\mathcal F f (\xi)\,d\xi,
\ees
is called  multiplier operator. Boundedness of the multiplier operator is well studied on $\R^n$.	
Let $m:\R^n\rightarrow \C$ satisfies the following H\"ormander-Mihlin differential inequalities:
\begin{equation}\label{eqn:condn_on_multiplier}
\left|\frac{d^j}{d\xi^j} m(\xi)\right|\leq A_j |\xi|^{-j},
\end{equation}
for all $\xi\in\R^n\setminus \{0\}$ and for all $0\leq j\leq [\frac{n}{2}]+1 $. Then  the multiplier operator $m(D)$ is a bounded operator on $L^p(\R^n), 1<p<\infty$.

Now  let $ G $ be a real rank one noncompact connected
semisimple Lie group with finite center. Let $ K $ be a maximal compact subgroup of G,
and let $ \X = G/K  $ be the associated symmetric space.   
Corresponding to a multiplier function $m$, the associated multiplier operator $T_m$ on $\X$ is defined by 
\be\label{multiplier-op-symm}
T_m(f)(g)=\int_\R \int_K m(\lambda) \wtilde{f}(\lambda, k)  e^{-(i\lambda +\rho)H(g^{-1}k)} |c(\lambda)|^{-2}\,d\lambda, \,\,g\in G,
\ee
where $\wtilde{f}$ is the (Helgason) Fourier transform of $f$ and $c(\lambda)$ is the Harish-Chandra $c$-function. The Boundedness of the multiplier operator on the symmetric space  has been studied by Clerc and Stein \cite{Clerc Stein}, Stanton and Tomas \cite{Stanton and Tomas},  Anker and Lohou\'e \cite{Anker-Lohoue}, Taylor \cite{Taylor}, Anker \cite{Anker} and Giulini, Mauceri, Meda \cite{GMM}. Let us elaborate. The pioneer work was done in   \cite{Clerc Stein}. It was observed there that if $ T_m  $ to be a continuous operator on $ L^p(G/K) $, then $ m $ is necessarily   holomorphic and bounded inside the strip $ S_p^\circ $, where   \bes S_p=\left\{\lambda\in\C\mid \,|\Im\lambda|\leq \left|\frac{2}{p}-1\right|\rho\right\} \quad \text{ for  }  p \in [1,\infty]  .
\ees 
Conversely the authors in \cite{Clerc Stein} also gave a sufficient conditions when $ G $ is complex. Similar results were obtained later when $ G $ is real rank one \cite{Stanton and Tomas} and when $ G $ is a normal real form \cite{Anker-Lohoue}.  In 1990,  Anker (\cite{Anker}) improved and generalized  previous results of \cite{Clerc Stein}, \cite{Stanton and Tomas}, \cite{Anker-Lohoue}, and \cite{Taylor} by proving the following multiplier theorem on $\X=G/K$:
\begin{theorem}$($Anker$)$ \label{Anker}
Let $1<p<\infty$, $v=| \frac{1}{p}-\frac{1}{2} |$ and $N=[v \dim \X] +1$. Let $m:\R\rightarrow \C$ extends to an even holomorphic function on $S_p^\circ$, $\frac{\partial^i}{\partial \lambda^i} m    
 \,  (i=0, 1, \cdots, N)$ extends continuously to $S_p$  and satisfies
\bes
\sup_{\lambda\in S_p} (1 + |\lambda|)^{-i}\left|\frac{\partial^i}{\partial \lambda^i} m(\lambda) \right|<\infty,
\ees
for $i=0, 1, \cdots, N$.
Then the associated multiplier operator 
$T_m$ 
is a bounded operator on $L^p(\X)$.
\end{theorem}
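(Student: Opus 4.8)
The plan is to realise $T_m$ as convolution by a bi-$K$-invariant kernel and to reduce the theorem to a weighted integrability estimate for that kernel. Writing the elementary spherical function as $\varphi_\lambda(g)=\int_K e^{-(i\lambda+\rho)H(g^{-1}k)}\,dk$, the evenness of $m$ gives $T_mf=f*k_m$, where
\bes
k_m(x)=\int_{\R} m(\lambda)\,\varphi_\lambda(x)\,|c(\lambda)|^{-2}\,d\lambda .
\ees
Since $T_m^*=T_{\overline m}$ and the hypotheses involve $p$ only through $v=|\tfrac1p-\tfrac12|$, which is invariant under $p\mapsto p'$, it suffices by duality to treat $1<p\le 2$; the case $p=2$ is immediate from Plancherel, as $\|T_m\|_{2\to2}=\sup_{\lambda\in\R}|m(\lambda)|$. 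For $1<p<2$ the tool I would use is the Kunze--Stein phenomenon in the form of Herz's majorization principle: for bi-$K$-invariant $k$,
\bes
\|f\mapsto f*k\|_{L^p(\X)\to L^p(\X)}\le C\int_G |k(x)|\,\varphi_{i\eta_p}(x)\,dx,\qquad \eta_p=\Big(\tfrac2p-1\Big)\rho=2v\rho ,
\ees
so the problem reduces to estimating $k_m$ against the spherical weight $\varphi_{i\eta_p}$ attached to the boundary of the strip $S_p$.

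Next I would separate the contributions near and far from the identity, since $\X$ is Euclidean at small scale but has exponential volume growth $\sim e^{2\rho r}$ at large scale (here $r=|x|$). For the part at infinity, the Harish--Chandra expansion $\varphi_\lambda=c(\lambda)\Phi_\lambda+c(-\lambda)\Phi_{-\lambda}$, with $\Phi_\lambda(x)\sim e^{(i\lambda-\rho)r}$, recasts the kernel as
\bes
k_m(x)=C_0\int_{\R} m(\lambda)\,\Phi_{-\lambda}(x)\,c(\lambda)^{-1}\,d\lambda .
\ees
The holomorphy of $m$ on $S_p^\circ$, together with the holomorphy of $c(\lambda)^{-1}$ in the lower half-plane, lets me shift the contour to $\Im\lambda=-\eta_p$; the shifted factor then satisfies $|\Phi_{-\lambda}(x)|\sim e^{-(\rho+\eta_p)r}=e^{-(2\rho/p)r}$, which against the weight $\varphi_{i\eta_p}(x)\sim e^{-(2\rho/p')r}$ and the measure $e^{2\rho r}\,dr$ leaves an exponentially \emph{neutral} integrand. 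What survives is purely polynomial: integrating by parts in $\lambda$ and using the bounds on $\partial_\lambda^i m$ together with the symbol estimates $|\partial_\lambda^i\, c(\lambda)^{-1}|\lesssim(1+|\lambda|)^{\frac{\dim\X-1}{2}-i}$ turns each derivative into a factor $r^{-1}$, and $N=[v\dim\X]+1$ derivatives are what make the remaining integral converge.

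For the local part, where $\X$ is comparable to $\R^n$ with $n=\dim\X$ and $|c(\lambda)|^{-2}\sim|\lambda|^{\dim\X-1}$, the spherical transform is modelled on the Euclidean Fourier--Bessel transform, and on the real axis the hypotheses are exactly Hörmander--Mihlin conditions of order $N$. Here I would transplant the classical multiplier theorem \eqref{eqn:condn_on_multiplier} in its sharp, $p$-dependent form, in which only $[v\dim\X]+1$ derivatives are needed for a fixed $p$: a Littlewood--Paley decomposition of $m$ in $\lambda$ combined with Calderón--Zygmund theory for the local piece of the kernel yields its $L^p$-boundedness. The local and global estimates are then recombined; if one prefers, the passage across the strip can also be organised by Stein's analytic interpolation, deforming $T_m$ through a holomorphic family and interpolating the boundary weighted estimate against the $L^2$ bound on the real axis.

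The step I expect to be the main obstacle is securing the \emph{sharp} count $N=[v\dim\X]+1$. Both regimes are borderline: near the identity one needs just enough regularity to cross the Hörmander threshold $v\dim\X$, while at infinity the decay gained from the contour shift cancels the volume growth and the Kunze--Stein weight exactly, leaving a polynomially divergent integral that only the last admissible derivative tames. A crude weighted-$L^1$ bound would waste derivatives here, so the delicate point is to extract the full $r$-decay from the oscillatory $\lambda$-integral—rather than from the leading exponential alone—and to match it precisely against the weight. Making the contour shift rigorous, by justifying the vanishing of the horizontal segments as $\Re\lambda\to\pm\infty$ from the decay of $m$, $\Phi_\lambda$ and $c(\lambda)^{-1}$, is the remaining technical point.
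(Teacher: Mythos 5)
This theorem is quoted from Anker's paper \cite{Anker}; the present paper gives no proof of it, so there is nothing in-paper to compare your argument against line by line. Your outline does follow the classical template (local/global splitting of the kernel, Harish--Chandra expansion and a contour shift to $\Im\lambda=-\eta_p$ for the part at infinity, Euclidean transference for the local part), which is also the skeleton of the authors' own analysis of $\Psi_\sigma$ in Sections \ref{Analysis on the local part} and \ref{analysis on global part}. However, as a proof of the statement \emph{as stated}, there is a genuine gap, and it is exactly the one you flag without resolving: the Herz/Kunze--Stein weighted-$L^1$ route cannot deliver the sharp derivative count $N=[v\dim\X]+1$.

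Concretely: after shifting the contour, the exponentials cancel exactly (as you note), and $N$ integrations by parts in $\lambda$ leave you with something controlled by
$\int_1^\infty r^{-N}\,dr\cdot\int_{\R}(1+|\lambda|)^{\frac{\dim\X-1}{2}-N}\,d\lambda$,
using $|\partial_\lambda^i c(\lambda)^{-1}|\lesssim(1+|\lambda|)^{\frac{\dim\X-1}{2}-i}$. The $\lambda$-integral converges only when $N>\frac{\dim\X+1}{2}$, and the $r$-integral only when $N>1$; but for $p$ near $2$ one has $N=[v\dim\X]+1=1$, so both diverge. This is precisely why Stanton--Tomas and Ionescu need on the order of $\left[\frac{\dim\X+1}{2}\right]+1$ derivatives, and why Anker's sharp result cannot be obtained by a pointwise weighted-$L^1$ bound on the kernel: his proof replaces it with weighted $L^2$ (H\"ormander-type) estimates on dyadic pieces of $m$, so that Plancherel absorbs the polynomial growth of $|c(\lambda)|^{-2}$, combined with a Calder\'on--Zygmund theory adapted to the exponential volume growth. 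Your sketch, carried out as written, proves a correct but strictly weaker theorem (with roughly $\frac{\dim\X}{2}$ derivatives in place of $[v\dim\X]+1$); supplying the missing $L^2$-based argument for the global part is the substance of Anker's proof, not a technical afterthought.
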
 
For technical reasons, it was necessary for the author to assume regularity assumptions on the boundary. Nonetheless, Anker mentioned that it should be possible to allow $ m $ to have a singularity at the boundary points $\pm  i\rho_p:= i\left| \frac{2}{p}-1\right|\rho$; and $ m $  will be  still a $ L^p $ multiplier on $\X  $. Later  Ionescu \cite{Ionescu 2000} improved the theorem above by replacing the continuity of the multiplier $m$ on the boundary with that of singularity condition at $\pm i\rho_p$.
 \begin{theorem}$($Ionescu$)$
 Let $1<p<\infty$. Let $m:\R\rightarrow \C$ extends to an even holomorphic function on $S_p^\circ$ and satisfies
\bes
\left|\frac{\partial^j}{\partial \lambda^j} m(\lambda) \right|\leq A_j\left[|\lambda + i\rho_p|^{-j} + |\lambda - i\rho_p|^{-j}\right], \lambda\in S_p,
\ees
for $j=0, 1, \cdots ,\left[\frac{\dim \X+1}{2}\right]+1$.
Then the associated multiplier operator 
$T_m$ 
is a bounded operator on $L^p(\X)$.
\end{theorem}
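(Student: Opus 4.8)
The plan is to realise $T_m$ as convolution with a $K$-bi-invariant kernel and then to deduce its $L^p$-boundedness from the Kunze--Stein phenomenon, after first extracting sharp decay of the kernel by deforming the spectral contour to the boundary of the strip. Since $m$ is even, Helgason--Fourier inversion gives $T_m f = f * k_m$ with $k_m(\exp tH) = \int_\R m(\lambda)\,\varphi_\lambda(\exp tH)\,|c(\lambda)|^{-2}\,d\lambda$, the inverse spherical transform of $m$, where $\varphi_\lambda$ is the elementary spherical function. The hypotheses on $m$ depend on $p$ only through $|2/p-1|$, so $S_p=S_{p'}$ and the singular points $\pm i\rho_p$ coincide for $p$ and $p'$; hence by duality it suffices to treat $1<p<2$, the case $p=2$ being immediate from Plancherel and $\sup_\R|m|<\infty$. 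For $1<p<2$ the target is the Herz majorising principle: a radial kernel $\kappa$ defines a bounded convolution operator on $L^p(\X)$ as soon as $\int_\X |\kappa(x)|\,\varphi_{i\rho_p}(x)\,dx<\infty$, where $\varphi_{i\rho_p}(\exp tH)\asymp e^{-(2/p')\rho t}$ encodes the critical $L^p$-growth.

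First I would estimate $k_m$ by contour deformation. Harish-Chandra's expansion $\varphi_\lambda(\exp tH)\approx c(\lambda)e^{(i\lambda-\rho)t}+c(-\lambda)e^{(-i\lambda-\rho)t}$ together with the evenness of $m$ reduces the kernel, up to subdominant terms, to $k_m(\exp tH)\approx e^{-\rho t}\int_\R m(\lambda)\,e^{i\lambda t}\,c(-\lambda)^{-1}\,d\lambda$. Because $m$ is holomorphic in $S_p^\circ$ and decays on horizontal lines, I would push the contour up to $\Im\lambda=\rho_p$; writing $\lambda=\xi+i\rho_p$ produces the gain $e^{-\rho_p t}$ and hence $k_m(\exp tH)\approx e^{-(\rho+\rho_p)t}I(t)=e^{-(2/p)\rho t}I(t)$, with $I(t)=\int_\R m(\xi+i\rho_p)\,e^{i\xi t}\,c(-\xi-i\rho_p)^{-1}\,d\xi$ (modulo residue terms from the poles of $1/c$, which are rapidly decaying). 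The decay of $I(t)$ as $t\to\infty$ is then harvested by integrating by parts $\left[\frac{\dim\X+1}{2}\right]+1$ times: the weighted bounds $|\partial_\lambda^j m|\le A_j(|\lambda+i\rho_p|^{-j}+|\lambda-i\rho_p|^{-j})$ control the derivatives of the integrand against the polynomial growth $c(\lambda)^{-1}\asymp|\lambda|^{(\dim\X-1)/2}$, and the stated number of derivatives is exactly what forces the dyadic pieces to be summable at $\xi=\infty$.

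Feeding the two estimates into the Herz criterion, the exponentials cancel: since $1/p+1/p'=1$, one gets $\int_\X|k_m|\varphi_{i\rho_p}\,dx\asymp\int_0^\infty e^{-(2/p)\rho t}\,e^{-(2/p')\rho t}\,e^{2\rho t}\,|I(t)|\,dt\asymp\int_0^\infty|I(t)|\,dt$ up to polynomial corrections. Thus the whole problem collapses to integrability of $I$ on the half-line, and it is natural to organise the estimates through a dyadic decomposition $m=\sum_j m_j$ in $|\lambda|\sim 2^j$ so that the integration-by-parts bounds become uniform and summable over $j$.

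The main obstacle is the endpoint. The decay of $I(t)$ for large $t$ is limited not by the behaviour at infinity but by the singularity of $m(\xi+i\rho_p)$ at $\xi=0$, i.e. of $m$ at the boundary point $i\rho_p$: there the hypothesis $|\partial_\lambda^j m|\le A_j|\lambda-i\rho_p|^{-j}$ is precisely a Mihlin condition at the origin, which yields only borderline, weak-type control, placing $I$ in an $L^{1,\infty}$-type Lorentz class rather than in $L^1$. Consequently the crude criterion $\int_\X|k_m|\varphi_{i\rho_p}\,dx<\infty$ just fails, and the decisive step---the one genuinely beyond Anker's theorem, which sidestepped the issue by assuming continuity at $\pm i\rho_p$ and so never needed the boundary contour---is to replace it by the sharp endpoint form of the Kunze--Stein inequality in Lorentz spaces (of type $L^{p,1}*L^{p',\infty}\to L^{p,1}$), combined with a careful splitting of $m$ near $\pm i\rho_p$ whose pieces are summed using the cancellation forced by holomorphy. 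Controlling this interaction between the boundary singularity and the endpoint of the Kunze--Stein phenomenon is where the real work lies.
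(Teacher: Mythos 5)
This statement is quoted in the paper as background (it is Ionescu's theorem from \cite{Ionescu 2000}); the paper does not reprove it, but Sections \ref{Analysis on the local part} and \ref{analysis on global part} adapt exactly Ionescu's method to the pseudo-differential setting, so a comparison is still meaningful. Your skeleton --- kernel realization, reduction to $1<p<2$ by duality, Harish-Chandra expansion away from the origin, contour deformation into the strip, integration by parts $\left[\frac{\dim\X+1}{2}\right]+1$ times, and a Herz-type majorization --- is the right family of ideas. But there are two genuine gaps.

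First, the single criterion $\int_\X |k_m|\,\varphi_{i\rho_p}\,dx<\infty$ cannot carry the whole proof, because it fails near the origin for reasons unrelated to the strip: under Mihlin-type hypotheses the kernel $k_m$ has a Calder\'on--Zygmund singularity $|k_m(x)|\sim d(x,\mathbf 0)^{-\dim\X}$ at $\mathbf 0$ and is not locally integrable, so no majorization principle applies there. One must split $T_m=T_m^{loc}+T_m^{glo}$ with a cutoff $\eta^\circ$ supported near the identity and treat the local part by transference to a Euclidean Mihlin--H\"ormander multiplier via the Stanton--Tomas expansion (Theorem \ref{thm:localexpansion-phi}); this is exactly what the paper does in Section \ref{Analysis on the local part}, and your proposal skips it entirely. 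Second, for the global part your diagnosis of the boundary difficulty is correct, but the proposed remedy is not the one that works. Ionescu (and the present paper, in the proofs of Lemmas \ref{lemma:est_of_I^{1,epsilon}_{1}} and \ref{lemma:est_of_E_0^+}) does not shift the contour all the way to $\Im\lambda=\rho_p$ and then invoke an endpoint Kunze--Stein inequality; instead the contour is moved to the \emph{spatially varying} line $\Im\lambda=\rho_p-\rho/[\ol v a_r]^+$. This keeps a distance $\sim \rho/r$ from the singular points $\pm i\rho_p$, so the hypothesis $|\partial_\lambda^j m|\lesssim |\lambda\mp i\rho_p|^{-j}$ yields bounds that grow like $r^{j}$ only on a $\xi$-interval of length $\sim 1/r$, which is exactly compensated by the $r^{-l}$ gained from integration by parts; the lost factor $e^{\rho}$ in $e^{(\rho_p-\rho/r)r}=e^{-\rho}e^{\rho_p r}$ is harmless. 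After H\"older in the $\ol N A$ coordinates the remaining integral is controlled by $\int_{\ol N}e^{-\frac{2}{p}\rho H(\ol v)}\,d\ol v<\infty$, i.e.\ \eqref{eqn:L1_norm_P(v-) (1+epsilon)_is_finite}. Your appeal to a Lorentz-space convolution inequality of the form $L^{p,1}*L^{p',\infty}\to L^{p,1}$ is not a correct formulation of the Kunze--Stein phenomenon (which concerns $L^{p}*L^{p}\subset L^{2}$-type inclusions for $p<2$ and their Lorentz refinements), and no such step is needed once the variable contour is used.
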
  
 	 
The boundedness of multiplier operator  for fixed $K$-types on $\mathrm{SL}(2, \R)$ has also been studied in \cite{Ricci}. Now one is naturally led to the following questions :  what happens if we replace the multiplier $m(\lambda)$ by a symbol $ \sigma(x,\lambda) $? and if the corresponding operator associated with the symbol should be bounded, then what are the conditions $ \sigma (x,\lambda) $ should satisfy? In this paper we introduce and study the boundedness of pseudo differential operators in the setting of noncompact symmetric spaces of rank one.  In preparation for the statement of our main results, let us recall the analogous theory  of pseudo differential operator in Euclidean setting and also let us introduce some more notations:

Let $\mathcal{S}^m $ be the symbols defined to be the set of all smooth functions $a:\R^n\times \R^n\rightarrow \C$ satisfies,  
		 \begin{equation}\label{eqn:condn_for_symbol_in_eulidean_case}
		 	   \left| {\partial^\beta_x}{\partial _{\xi} ^\alpha} a (x,\xi)\right| \leq  {C_{\alpha,\beta} }\, {( 1+ |\xi| )^{m-|\alpha|}},
		 \end{equation}
	 for all $x, \xi\in\R^n$ and for all multi indices $ \alpha $ and $ \beta $. 
Then we have the following theorem \cite{Stein 93}:
	 \begin{theorem}$($\cite[Chap VI, Theorem 1]{Stein 93}$)$\label{thm:pdo_on_Rn}
	 Let $a\in \mathcal{S}^0$ and  $ a(x, D) $ be the pseudo differential operator associated with the symbol $ a$. Then $a(x, D)$  extends to  a bounded operator on $ L^p(\R^n) $ to itself, for $ 1<p<\infty $.
 	 \end{theorem}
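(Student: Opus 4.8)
The plan is to realize $a(x,D)$ as a Calderón--Zygmund operator and then invoke the Calderón--Zygmund machinery. The argument separates into two essentially independent ingredients: an $L^2$ bound, and pointwise estimates on the distributional kernel away from the diagonal. Once both are in hand, the $L^p$ conclusion for all $1<p<\infty$ is automatic.

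First I would establish that $a(x,D)$ is bounded on $L^2(\R^n)$. The cleanest route is a $T^{*}T$ argument: one forms the composition $a(x,D)^{*}a(x,D)$, computes its symbol through the symbolic calculus, and checks that the result again lies in $\mathcal{S}^0$ (with leading term $|a|^2$ and lower-order corrections). Combined with the Cotlar--Stein almost-orthogonality lemma, applied to a dyadic decomposition $a=\sum_{j\ge 0} a_j$ with $a_j$ supported in $|\xi|\sim 2^j$, this yields the $L^2$ estimate. I expect this step to be the technical heart of the matter, since it is where the quantitative symbol bounds \eqref{eqn:condn_for_symbol_in_eulidean_case} enter most delicately and where the main obstacle lies; the kernel estimates below are comparatively routine.

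Next I would pass to the integral kernel
\[
K(x,y)=\int_{\R^n} e^{2\pi i (x-y)\cdot\xi}\, a(x,\xi)\,d\xi,
\]
interpreted as an oscillatory integral representing the operator off the diagonal, and show it obeys the standard Calderón--Zygmund estimates $|K(x,y)|\lesssim |x-y|^{-n}$ and $|\nabla_{x,y}K(x,y)|\lesssim |x-y|^{-n-1}$ for $x\ne y$. On the dyadic piece $a_j$ the support has measure $\sim 2^{jn}$, giving the trivial bound $|K_j|\lesssim 2^{jn}$, while integrating by parts $N$ times in $\xi$ converts the oscillation $e^{2\pi i(x-y)\cdot\xi}$ into gains of $|x-y|^{-1}$ at the cost of $\xi$-derivatives of $a_j$, each of which contributes a factor $2^{-j}$ by the symbol condition with $|\xi|\sim 2^j$. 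This produces $|K_j(x,y)|\lesssim 2^{jn}(2^{j}|x-y|)^{-N}$; summing over $j$, splitting the sum at $2^j\sim|x-y|^{-1}$ and choosing $N>n$ on the high-frequency part, yields the desired $|x-y|^{-n}$ bound, and differentiation under the integral sign handles the gradient estimates in the same way.

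With the $L^2$ bound and the Calderón--Zygmund kernel estimates established, the Calderón--Zygmund theorem shows that $a(x,D)$ is of weak type $(1,1)$ and hence, by Marcinkiewicz interpolation with the $L^2$ bound, bounded on $L^p$ for $1<p\le 2$. The range $2\le p<\infty$ then follows by duality, because the formal adjoint $a(x,D)^{*}$ has a symbol in the same class $\mathcal{S}^0$ (again via the symbolic calculus) and is therefore covered by the identical argument. This completes the plan, the principal difficulty being the $L^2$ estimate.
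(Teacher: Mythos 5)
The paper does not prove this statement itself but cites it directly from Stein's book, and your proposal reconstructs essentially the argument given there: an $L^2$ bound (Stein obtains it by Fourier-expanding the symbol in $x$ to reduce to multipliers, while your $T^{*}T$/Cotlar--Stein route on dyadic pieces is an equally standard variant), Calder\'on--Zygmund kernel estimates via integration by parts on the dyadic decomposition, and then weak $(1,1)$, interpolation, and duality. Your outline is correct and matches the cited approach in all essentials.
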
 
 	 Moreover, a genralized version of this theorem is true for  a family of symbols $\{a_\tau :\tau \in \Lambda \}$ on $\R^n$:
 	 \begin{theorem}\label{thm:pdo_for_family_of_symbols}
 	 	Suppose $ \{a_\tau :\tau \in \Lambda \}$ be a family of  symbols in $ \mathcal{S}^0 $ such that
 	 	\begin{equation}\label{eqn:condn_on_symbol_family}
 	 		\left| {\partial^\beta_x}{\partial _{\xi} ^\alpha} a _\tau(x,\xi)\right| \leq  {C_{\alpha,\beta} }\, {( 1+ |\xi| )^{-|\alpha|}}, 
 	 	\end{equation} 
 	 	where $ C_{\alpha,\beta} $ does not depend on $ \tau $. Then for $ 1<p<\infty $, there is a constant $ C_p>0  $ independent of $ \tau \in \Lambda $, such that
 	 	\begin{equation}
 	 		\|a_\tau (x, D) (f)\|_{L^p(\R^n)} \leq C_p \|f\|_{L^p(\R^n)},
 	 	\end{equation}
 	 	for all   $ \tau \in \Lambda$ and $ f \in L^p(\R^n) $.
 	 \end{theorem}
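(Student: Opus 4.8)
The plan is to prove this statement as the uniform (family) analogue of Theorem~\ref{thm:pdo_on_Rn}, and the whole point is that the proof of Theorem~\ref{thm:pdo_on_Rn} is genuinely \emph{quantitative}: it does not merely assert that $a(x,D)$ is bounded for each individual $a\in\mathcal{S}^0$, but in fact controls the operator norm $\|a(x,D)\|_{L^p\to L^p}$ by a single fixed seminorm of the symbol. Concretely, for $k\in\N$ set
\[
\|a\|_{(k)}:=\max_{|\alpha|+|\beta|\le k}\ \sup_{x,\xi\in\R^n}(1+|\xi|)^{|\alpha|}\,\bigl|\partial_x^\beta\partial_\xi^\alpha a(x,\xi)\bigr|.
\]
I would first establish that there exist an integer $k=k(n)$ and a constant $C=C(n,p)$, depending only on the dimension $n$ and on $p$, such that every $a\in\mathcal{S}^0$ satisfies $\|a(x,D)\|_{L^p\to L^p}\le C\,\|a\|_{(k)}$. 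Granting this, the theorem is immediate: hypothesis~\eqref{eqn:condn_on_symbol_family} gives $\|a_\tau\|_{(k)}\le\max_{|\alpha|+|\beta|\le k}C_{\alpha,\beta}=:C'$ uniformly in $\tau$, whence $\|a_\tau(x,D)\|_{L^p\to L^p}\le C\,C'=:C_p$ for all $\tau\in\Lambda$.

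To establish the quantitative bound I would retrace the Calder\'on--Zygmund proof of Theorem~\ref{thm:pdo_on_Rn} and track the dependence of every constant on the symbol. First, the $L^2$-boundedness: writing $a(x,D)$ as an integral operator and applying an almost-orthogonality (Cotlar--Stein) decomposition in the frequency variable, one obtains $\|a(x,D)\|_{L^2\to L^2}\le C(n)\,\|a\|_{(k_0)}$ for some $k_0=k_0(n)$; this is the content of the Calder\'on--Vaillancourt estimate, whose proof bounds the $L^2$-norm by finitely many derivative seminorms of $a$. Second, the kernel
\[
K(x,y)=\int_{\R^n}e^{2\pi i(x-y)\cdot\xi}\,a(x,\xi)\,d\xi,
\]
interpreted as an oscillatory integral, satisfies off-diagonal estimates $|K(x,y)|\le C_N\,\|a\|_{(k_N)}\,|x-y|^{-N}$ for every $N$, together with the H\"ormander regularity condition $\int_{|x-y|>2|y-y'|}|K(x,y)-K(x,y')|\,dx\le C(n)\,\|a\|_{(k_1)}$; both are obtained by integration by parts in $\xi$, and each constant is a finite seminorm of $a$.

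With the $L^2$ bound and the kernel estimates in hand, the Calder\'on--Zygmund theorem yields the weak $(1,1)$ inequality, and interpolation with $L^2$ gives boundedness on $L^p$ for $1<p\le 2$ with norm $\le C(n,p)\,\|a\|_{(k)}$. For $2\le p<\infty$ I would pass to the adjoint: by the symbolic calculus the formal adjoint $a(x,D)^*$ is again a pseudo differential operator whose symbol obeys the $\mathcal{S}^0$ estimates with seminorms controlled by those of $a$, so the $L^{p'}$ bound on the adjoints transfers to the $L^p$ bound on the originals, again with constant depending only on $\|a\|_{(k)}$. Assembling the two ranges produces the desired $k(n)$ and $C(n,p)$, and hence the uniform constant $C_p$.

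The main obstacle is one of bookkeeping rather than of new ideas: one must verify that the $L^2$ step---the only place where the argument is not a direct integration by parts---really is quantitative in the stated sense, i.e.\ that the Cotlar--Stein sum and the Calder\'on--Vaillancourt estimate close using only a fixed, dimension-determined number of symbol derivatives, with no hidden dependence on $a$ beyond $\|a\|_{(k)}$. Once this is confirmed, uniformity over $\tau$ costs nothing, since the entire proof of Theorem~\ref{thm:pdo_on_Rn} factors through the seminorm $\|\cdot\|_{(k)}$, on which~\eqref{eqn:condn_on_symbol_family} imposes a bound independent of the parameter.
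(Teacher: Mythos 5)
Your proposal is correct and is exactly what the paper intends: the authors omit the proof with the remark that it is ``exactly similar'' to that of Theorem~\ref{thm:pdo_on_Rn}, the point being precisely that the standard Calder\'on--Zygmund argument bounds $\|a(x,D)\|_{L^p\to L^p}$ by a fixed symbol seminorm $\|a\|_{(k)}$, which \eqref{eqn:condn_on_symbol_family} controls uniformly in $\tau$. Your write-up simply makes this quantitative bookkeeping explicit (the only cosmetic quibble is that for the class $\mathcal{S}^0=S^0_{1,0}$ the $L^2$ step in Stein's proof proceeds by a dyadic reduction rather than literally by Calder\'on--Vaillancourt, but either route yields the seminorm-controlled bound you need).
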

 	 Proof of this theorem (Theorem \ref{thm:pdo_for_family_of_symbols}) is exactly similar to  Theorem \ref{thm:pdo_on_Rn} and therefore we omit the proof.
 	 \begin{remark}
 	 	 We would like to mention that for pseudo differential operator  one cannot weaken the regularity assumption on the symbol $ a(x,\xi) $,  having singularity near $ \xi=0 $. Particularly, if the symbol $a  $ satisfies the following simpler looking condition
 	 	\begin{equation}\label{eqn:condn_for_symbol_L2_unbddness}
 	 		\left| {\partial^\beta_x}{\partial _{\xi} ^\alpha} a (x,\xi)\right| \leq  {C_{\alpha,\beta} }\, {|\xi| ^{-|\alpha|}},
 	 	\end{equation}
  	then the corresponding operator may not be bounded. We refer the interested reader to the discussion \cite[page  267]{Stein 93}.
 	 \end{remark}
 
Now suppose $\sigma: \X\times \C\rightarrow \C$ be a suitable function. We define the  pseudo differential operator associated with the symbol $ \sigma $ by
	\begin{align}\label{eqn:defn_of_T_sigmaf_sym_space}
	 \Psi_\sigma f(x)& =  \int_{\R } \int_{K} \sigma(x,\lambda) \tilde{f}(\lambda,k)   e^{-(i\lambda+\rho)H(x^{-1}k)} |c(\lambda)|^{-2} d\lambda \, dk,
	\end{align}
for any smooth compactly supported function $ f $ on $ \X $. Our aim of this paper is to study boundedness of the pseudo differential operator $\Psi_\sigma$ ($\Psi$DO) on rank one symmetric spaces. We divide the study into two cases: when $p=2$ and when $p\not=2$.
First we state our result for the case when $p\not =2$.
 		\begin{theorem}\label{thm:pdo_for_sym_space}
			Suppose  $ p\in (1,2) \cup (2,\infty)$ and $\X=G/K$.  Let  $ \sigma : \X\times  \C \rightarrow \C  $ be a smooth function such that 
			for each $ x\in \X$, $ \lambda \mapsto \sigma(x,\lambda)$ is an even holomorphic function on $ S_p^\circ$ and  satisfies the differential inequalities:
\begin{equation}\label{eqn:hypothesis_of_sigma}
			\left| \frac{\partial^\beta}{{\partial s}^\beta}\frac{\partial^\alpha}{\partial \lambda ^\alpha} \sigma (x a_s,\lambda)\right| \leq  {C_{\alpha,\beta}}{(1+|\lambda| )^{-\alpha}},
		\end{equation}
	for all $ \alpha=0,1,\cdots, \left[\frac{\dim \X+1}{2}\right]+1; \beta=0,1,2 $;  for all $ x \in G, s\in \R$ and $ \lambda \in S_p $.			
Then the operator $ \Psi_\sigma $ extends to a bounded operator on $ L^p{(\mathbb X)} $ to itself.
				\end{theorem}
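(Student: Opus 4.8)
\emph{Proof proposal.} The plan is to view $\Psi_\sigma$ as a field of multiplier operators and to reduce its boundedness to the Anker--Ionescu multiplier theorem (\cite{Anker}, \cite{Ionescu 2000}) together with the Euclidean result for families of symbols, Theorem \ref{thm:pdo_for_family_of_symbols}. For each fixed base point $x$ the map $\lambda\mapsto\sigma(x,\lambda)$ is an even holomorphic function on $S_p^\circ$ which, by the $\beta=0$ instance of \eqref{eqn:hypothesis_of_sigma}, satisfies exactly the hypotheses of Ionescu's theorem with constants independent of $x$. Hence the ``frozen'' operators $T_{\sigma(x,\cdot)}$ are radial convolution operators $f\mapsto f\ast k_x$, uniformly bounded on $L^p(\X)$, and one has the pointwise identity $\Psi_\sigma f(x)=(T_{\sigma(x,\cdot)}f)(x)=\int_G f(y)\,k_x(y^{-1}x)\,dy$, each kernel $k_x$ being radial. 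The entire difficulty beyond the multiplier case is the dependence of $k_x$ on the base point $x$, and this is precisely what the order-two derivatives $\partial_s^\beta$ along the $A$-direction in \eqref{eqn:hypothesis_of_sigma} are designed to control.

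I would then split the operator by a radial cut-off near the identity, $\Psi_\sigma=\Psi_\sigma^{\mathrm{loc}}+\Psi_\sigma^{\mathrm{glob}}$, according to whether $d(x,y)\le 1$ or $d(x,y)>1$ in $k_x(y^{-1}x)$. For the global part only the uniform multiplier bounds are needed: shifting the contour of the $\lambda$-integral defining $k_x$ to the boundary $\Im\lambda=\pm\rho_p$ of $S_p$, which is legitimate by holomorphy and the $\beta=0$ bounds, yields the Anker--Ionescu estimate showing that $|k_x(z)|$ decays like $e^{-\rho_p d(z)}$ up to polynomial factors away from the identity, with constants uniform in $x$. Setting $\Phi(z):=\mathbf 1_{\{d(z)>1\}}\sup_x|k_x(z)|$, which is again radial, gives the pointwise domination $|\Psi_\sigma^{\mathrm{glob}}f|\le |f|\ast\Phi$, and the $L^p$-bound follows from the integrability criterion underlying the Kunze--Stein phenomenon (as in Anker's proof), since $\Phi$ inherits the exponential decay that makes convolution by it bounded on $L^p(\X)$. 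No regularity of $\sigma$ in $x$ is used here.

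The core of the argument is therefore the local part, where the kernels $k_x$ carry the H\"ormander--Mihlin singularity at the diagonal and a crude domination by $\sup_x|k_x|$ is no longer integrable. Here I would transfer the problem to $\R^n$, $n=\dim\X$: in normal coordinates the symmetric space is comparable to Euclidean space, the spherical functions $\varphi_\lambda$ are locally comparable to Euclidean exponentials, and $\Psi_\sigma^{\mathrm{loc}}$ is realized as a superposition of Euclidean pseudo differential operators. The device for the base-point dependence is a Fourier expansion in the radial variable: writing $\sigma(x a_s,\lambda)=\sum_{k}\widehat\sigma_k(x,\lambda)\,e^{2\pi i k s}$ over a fixed interval, the two $s$-derivatives in \eqref{eqn:hypothesis_of_sigma} force $\|\widehat\sigma_k(x,\cdot)\|\lesssim (1+|k|)^{-2}$ uniformly, so the contributions are summable. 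Each Fourier coefficient produces a Euclidean symbol of class $\mathcal S^0$, while the remaining parameters (the compact angular variable in $K/M$ and the chart localization) enter as the parameter $\tau$ of Theorem \ref{thm:pdo_for_family_of_symbols}, for which no regularity is required, only uniform bounds. Applying Theorem \ref{thm:pdo_for_family_of_symbols} to this uniformly bounded family and summing the resulting estimates against $\sum_k(1+|k|)^{-2}<\infty$ bounds $\Psi_\sigma^{\mathrm{loc}}$ on $L^p$.

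The main obstacle is this local step. One must make the passage to Euclidean coordinates precise and, above all, verify that the finite regularity actually at hand --- only two derivatives in the single radial $A$-direction, supplemented by evenness and holomorphy in $\lambda$ and by the compactness of $K/M$ in the angular directions --- is exactly enough to realize $\Psi_\sigma^{\mathrm{loc}}$ as a uniformly bounded family of genuine $\mathcal S^0$ symbols on $\R^n$. Transferring the symbol estimates \eqref{eqn:hypothesis_of_sigma} from the strip $S_p$ to the Euclidean symbol classes, and securing the summability of the reassembly in the Fourier index $k$, is where the bulk of the technical work lies; once this is in place, adding the local and global bounds yields the asserted $L^p(\X)$-boundedness of $\Psi_\sigma$.
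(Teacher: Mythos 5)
Your overall architecture (local/global splitting, Euclidean pseudodifferential theory for the local piece, holomorphy and contour shifting for the global piece) matches the paper's, and your global argument is a genuinely different packaging: instead of proving bilinear estimates for the operators $\mathfrak{E}^{\pm}$ in $\ol N A$ coordinates as the paper does, you freeze the base point, majorize pointwise by $|f|\ast\Phi$ with $\Phi=\sup_x|k_x|\cdot\mathbf 1_{\{d>1\}}$ radial, and invoke Herz's majorizing principle. This is plausible and arguably cleaner, but two caveats: first, the decay you state, $e^{-\rho_p d(z)}$ "up to polynomial factors," is not enough — the Herz integral $\int_G\Phi\,\varphi_{i\gamma_p\rho}$ would diverge; you need the full $e^{-(\rho+\rho_p)d(z)}\,d(z)^{-l}$ with $l>1$, where the extra $e^{-\rho d(z)}$ comes from the Harish-Chandra expansion prefactor and the $d(z)^{-l}$ from integrating by parts $l>\frac{d+1}{2}$ times in $\lambda$. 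Second, establishing that uniform kernel bound still requires essentially all of the paper's Section 5 machinery (the expansion of Proposition \ref{propn:global_expansion_of_phi_lambda}, the $e^{-\epsilon\lambda^2}$ regularization, the contour shift to $\Im\lambda=\rho_p$, and the $c(-\lambda)^{-1}$ estimates), so the saving is organizational rather than substantive.

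The genuine gap is in the local part. You propose to transfer $\Psi_\sigma^{\mathrm{loc}}$ to $\R^n$ in normal coordinates and treat the angular base-point dependence as the parameter $\tau$ of Theorem \ref{thm:pdo_for_family_of_symbols}, with a Fourier expansion in $s$ handling the radial dependence. But a chart-based realization as an $n$-dimensional pseudodifferential operator makes \emph{all} coordinates of the base point spatial variables of the Euclidean operator, and membership in $\mathcal S^0$ then requires derivative bounds of the symbol in every spatial direction; the hypothesis \eqref{eqn:hypothesis_of_sigma} only controls derivatives along the one-dimensional $A$-direction, and compactness of $K/M$ is no substitute for derivative bounds. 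Your Fourier expansion in $s$ addresses precisely the variable that is already controlled and does nothing for the angular dependence, so the reduction to a uniformly bounded family of genuine $\mathcal S^0$ symbols — which you correctly identify as "where the bulk of the technical work lies" — is exactly the step that fails as described. The missing idea is the Coifman--Weiss transference principle: after the Cartan decomposition and Minkowski's inequality over $K$, the representation $\mathcal R_tf(x)=f(xa_t)$ of $\R$ lets one bound $\|\Psi_\sigma^{\mathrm{loc}}\|_{L^p(\X)\to L^p(\X)}$ by the supremum over $z\in G$ of the $L^p(\R)$ norms of \emph{one-dimensional} operators along the orbits $s\mapsto za_s$ (Lemma \ref{lemma:Coifman-Weiss_condition_our_case}); only then does the transverse variable $z$ legitimately become a parameter, and the resulting one-dimensional kernels are identified with Euclidean $\mathcal S^0$ pseudodifferential operators via the Stanton--Tomas Bessel expansion of $\phi_\lambda$ near the origin, with an $L^1$-dominated error. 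Without this (or an equivalent device), your local step does not go through.
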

 
			\begin{remark}
				\begin{enumerate}
					\item Let us compare our result with the Euclidean case.   {As mentioned before in multiplier case, } the holomorphic extension property of the symbol $ \sigma $ is a new and necessary condition  for the pseudo differential operator $ \Psi_{\sigma} $ to be bounded on $ L^p(G/K) $. It is the  boundary behavior of $ \sigma $ which is similar to the Euclidean case. Here we have considered H\"ormander-type conditions near infinity.  
					\item When $ \sigma $ is a multiplier, then one can write $ \Psi_{\sigma} $ as a convolution operator with a $ K $-biinvariant kernel. This is the fundamental difference between the multiplier case and our situation. Particularly, in multiplier theory the author in \cite{Ionescu 2000} used a transference theorem,  which is comparable to the Herz majorizing principle. For higher rank  case  the author used the same principle  to estimate the $ L^p $ norm of multiplier operator (see \cite[Lemma 4.3]{Ionescu 2002}). But in our case one cannot apply the methods above directly, to find the $ L^p $ norm estimate of $ \Psi_{\sigma} $.
					\item We   also have established a connection between the $ L^p $ boundedness of (the local part of ) pseudo differential operator on $G/K  $ with bounded Euclidean pseudo differential operators (see $\S$ \ref{Analysis on the local part}). From the Euclidean counterpart, we derived the derivative condition on the $ ``x"$ variable of $ \sigma(x,\lambda) $.
				\end{enumerate}
			\end{remark}

 Next we consider the case when $p=2$. For $ p=2 $, the boundedness of the multiplier operator  follows easily due to Plancherel theorem in both Euclidean spaces and symmetric spaces. But in  the same way we could not prove $L^2$ boundedness for the pseudo differential operator associated with a symbol.  In this case we get only partial results, in the $K$-biinvariant setting.  Let $ \sigma :\X \times \R \rightarrow \C $ be a smooth  function  such that $ x\mapsto \sigma(x,\lambda) $ is a $ K $-biinvariant function and let $\Psi_\sigma$ be the ``radial'' pseudo differential operator associated with $ \sigma $. That is,
 \begin{equation}\label{eqn:defn_of_Tsigmaf_for_K_biinv_case_real_sym_space}
 	 \Psi_\sigma(f)(x) = \int_{\R}  \sigma(x,\lambda)  \what f (\lambda) \phi_\lambda(x) |c(\lambda)|^{-2} d\lambda,
 \end{equation}
for $ f\in C_c^\infty(G//K) $, where $\what{f}$ is the spherical Fourier transform of the $K$-biinvariant function $f$ and $\phi_\lambda$ is the elementary spherical function. Suppose $\dim \X=d= m_{1}+m_{2}+1$, then we have the following theorem:
\begin{theorem}\label{thm_for_L2_cap_Lp_to_Lp_K_biinv}
	Let  $ \sigma :\X \times \R \rightarrow \C $ be a smooth  function  such that $ x\mapsto \sigma(x,\lambda) $ is $ K $-biinvariant and \bes \sigma(x,\lambda) =\sigma(x,-\lambda)  \text{ for all }  \lambda \in \R. 
 \ees  Assume that $ \sigma $ satisfies the following  differential inequalities: 
\begin{equation}\label{eqn:condn_of_sigma_for_real_rank_one_L2 case}
	\left|\frac{\partial^\beta}{{\partial t}^\beta} \, \frac{\partial^\alpha}{{\partial \lambda }^\alpha } \sigma(a_t,\lambda) \right| \leq C_{\alpha,\beta} (1+|\lambda|)^{-\alpha-d-1},
\end{equation}
for all $ \lambda , t \in \R  $ and  $ \alpha,\beta=0, 1, 2$. Then the operator $ \Psi_\sigma $ extends to a bounded operator from $ L^2(G//K)\cap L^p(G//K) $ to $ L^2{(G//K)} $ for any $ 1\leq p<2 $.
\end{theorem}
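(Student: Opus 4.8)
The plan is to view $\Psi_\sigma$ as a variable--kernel (``frozen coefficient'') convolution operator, to dominate its kernel pointwise, uniformly in the base point, by a single $K$-biinvariant function lying in $L^2(G//K)$, and then to invoke the Kunze--Stein phenomenon to convert this into an $L^p\to L^2$ bound. First I would fix $x\in\X$ and introduce the $K$-biinvariant function $\kappa_x$ on $G$ whose spherical Fourier transform is $\what{\kappa_x}(\lambda)=\sigma(x,\lambda)$; this is legitimate since $\sigma(x,\cdot)$ is even, smooth and bounded. Writing $\what f(\lambda)=\int_G f(y)\phi_\lambda(y)\,dy$ and using the convolution theorem together with the spherical inversion formula, one checks the identity
\begin{equation*}
\Psi_\sigma f(x)=(f\ast\kappa_x)(x)=\int_G f(y)\,\kappa_x(y^{-1}x)\,dy ,
\end{equation*}
so that $\Psi_\sigma$ is genuinely a pseudo differential operator with frozen kernel $\kappa_x$. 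Hence $|\Psi_\sigma f(x)|\le\int_G|f(y)|\,|\kappa_x(y^{-1}x)|\,dy$, and everything reduces to a pointwise estimate on $\kappa_x$ that is uniform in $x$.

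The heart of the argument is the kernel bound: I claim there is one $K$-biinvariant $\Phi\in L^2(G//K)$ with $|\kappa_x(a_r)|\le\Phi(a_r)$ for every $x\in\X$ and every $r\ge0$. The naive estimate using only $|\phi_\lambda(a_r)|\le\phi_0(a_r)$, the decay $|\sigma(x,\lambda)|\le C(1+|\lambda|)^{-d-1}$, and $|c(\lambda)|^{-2}\lesssim(1+|\lambda|)^{d-1}$ gives merely $|\kappa_x(a_r)|\lesssim\phi_0(a_r)$; since $\phi_0(a_r)^2(\sinh r)^{m_1}(\sinh 2r)^{m_2}\asymp(1+r)^2$, the function $\phi_0$ only just fails to be square integrable. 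To recover the missing decay I would insert the Harish--Chandra expansion
\begin{equation*}
\phi_\lambda(a_r)=c(\lambda)\,e^{(i\lambda-\rho)r}\bigl(1+\textstyle\sum_{k\ge1}\Gamma_k(\lambda)e^{-2kr}\bigr)+c(-\lambda)\,e^{(-i\lambda-\rho)r}\bigl(1+\textstyle\sum_{k\ge1}\Gamma_k(-\lambda)e^{-2kr}\bigr),
\end{equation*}
use the identity $c(\lambda)|c(\lambda)|^{-2}=c(-\lambda)^{-1}$, and integrate by parts in $\lambda$ against the oscillatory factor $e^{\pm i\lambda r}$. Because the amplitude $\sigma(x,\lambda)c(-\lambda)^{-1}$ and its $\lambda$-derivatives (up to the order $2$ allowed by the hypothesis) are integrable---the decay $(1+|\lambda|)^{-\alpha-d-1}$ dominates the $|\lambda|^{(d-1)/2}$ growth of $c(-\lambda)^{-1}$---each integration by parts produces a factor $r^{-1}$, so that
\begin{equation*}
|\kappa_x(a_r)|\le C\,(1+r)^{-2}e^{-\rho r},\qquad x\in\X,\ r\ge0 .
\end{equation*}
With $\Phi(a_r)=C(1+r)^{-2}e^{-\rho r}$ one has $\|\Phi\|_{L^2(G//K)}^2\asymp\int_0^\infty(1+r)^{-4}e^{-2\rho r}(\sinh r)^{m_1}(\sinh 2r)^{m_2}\,dr<\infty$, so $\Phi\in L^2(G//K)$ and dominates every $\kappa_x$.

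Given the majorization, $|\Psi_\sigma f|\le|f|\ast\Phi$ pointwise, since $\Phi$ is $K$-biinvariant. I would then finish with the Kunze--Stein phenomenon: for $G$ semisimple with finite centre and $1\le p<2$ one has $L^p(G)\ast L^2(G)\subseteq L^2(G)$, i.e. $\|h\ast g\|_{2}\le C_p\|h\|_{p}\|g\|_{2}$ (for $p=1$ this is just Young's inequality). Applying it with $h=|f|$ and $g=\Phi$ yields
\begin{equation*}
\|\Psi_\sigma f\|_{L^2(\X)}\le\||f|\ast\Phi\|_{L^2(\X)}\le C_p\,\|f\|_{L^p(\X)}\,\|\Phi\|_{L^2(\X)}\le C\,\|f\|_{L^p(\X)},
\end{equation*}
first for $f\in C_c^\infty(G//K)$ and then, by density, for all $f\in L^2(G//K)\cap L^p(G//K)$; in fact this gives the stronger $L^p\to L^2$ bound.

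I expect the main obstacle to be the uniform kernel estimate, precisely the upgrade from $\phi_0$-domination (which lands only in weak-$L^2$, failing $L^2$ by a logarithmic factor) to genuine $L^2$-domination. The delicate points are that the individual terms of the Harish--Chandra expansion and the coefficients $\Gamma_k(\lambda)$, together with $c(-\lambda)^{-1}$, are singular at $\lambda=0$, even though $\phi_\lambda(a_r)$ is regular there; I would circumvent this by using the expansion only on $|\lambda|\gtrsim1$ and treating the compactly supported low-frequency part $|\lambda|\lesssim1$ separately, where $\sigma(x,\cdot)|c|^{-2}$ is a fixed amplitude and the regularity of $\lambda\mapsto\phi_\lambda(a_r)$ again permits integrating by parts to gain the decay in $r$. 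The $x$-independence of the constants $C_{\alpha,\beta}$, together with the regularity of $\sigma$ in the space variable, guarantees that the majorant $\Phi$ can be chosen uniformly in $x$.
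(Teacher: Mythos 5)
Your argument is correct, and it reaches the conclusion by a genuinely different route from the paper. The paper multiplies by a cut-off $\eta$ supported away from the origin, uses the Harish--Chandra expansion to rewrite $\eta(t)\Psi_\sigma f(a_t)e^{\rho t}$ as Euclidean pseudo differential operators $\tilde{\Psi}_{a_i}$ acting on the Abel transform $\mathcal A f$, invokes the Euclidean $L^2$ theory, and then controls $\|\mathcal A f\|_{L^2(\R)}=\|\what f\|_{L^2(d\lambda)}$ by splitting into low frequencies (bounded by $\|f\|_{L^p}$ via $|\what f(\lambda)|\le\|f\|_p\|\phi_0\|_{p'}$) and high frequencies (bounded by $\|f\|_{L^2}$ via Plancherel, since $|c(\lambda)|^{-2}\gtrsim 1$ there); the vanishing of $|c(\lambda)|^{-2}$ at $\lambda=0$ is exactly why the paper's route cannot discard the $L^2$ norm and only obtains an $L^2\cap L^p\to L^2$ bound. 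Your route instead freezes the base point, dominates the kernel $\kappa_x$ uniformly by a single radial majorant, and applies Kunze--Stein; the computations check out: the amplitude $b(\lambda)=\sigma(x,\lambda)(1+a(\lambda,r))c(-\lambda)^{-1}$ satisfies $|\partial_\lambda^j b|\lesssim(1+|\lambda|)^{-(d+3)/2-j}$ for $j\le 2$ by \eqref{eqn:condn_of_sigma_for_real_rank_one_L2 case}, \eqref{eqn:est_for_a(lambda,r)} and \eqref{est: cminusla}, so two integrations by parts give $|\kappa_x(a_r)|\le C(1+r)^{-2}e^{-\rho r}$ uniformly in $x$ (in fact one integration by parts, hence one $\lambda$-derivative, already suffices for square-integrability of the majorant against $\Delta(r)\,dr$). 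Two small remarks: your worry about singularities of the expansion coefficients at $\lambda=0$ is moot if you use the form of the expansion in Proposition \ref{propn:global_expansion_of_phi_lambda}, where $a(\lambda,t)$ satisfies the symbol estimates on all of $\R$ for $t\ge 1/10$ and $c(-\lambda)^{-1}$ is smooth with the bounds \eqref{est: cminusla} down to $\lambda=0$; and your conclusion is strictly stronger than the theorem as stated, namely $L^p(G//K)\to L^2(G//K)$ for all $1\le p<2$ without the $L^2$ hypothesis on $f$, which the paper only obtains (in Theorem \ref{thm:pdo_real_sym_space_K_biinv_2<p<3}, for $\tfrac32<p<2$ and with many more derivatives assumed) by a detour through the Abel transform and interpolation. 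What the paper's approach buys is a template that also runs in situations where a pointwise majorization is unavailable; what yours buys is a shorter proof and a sharper statement under the stated hypotheses, at the cost of relying on the strong decay $(1+|\lambda|)^{-d-1}$ of the symbol to make the kernel genuinely square-integrable.
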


We improve this result in the following subcases:

\begin{theorem}\label{thm:pdo_real_sym_space_K_biinv_2<p<3}
Let $ \frac{3}{2}<p<2 $ and  $\X=G/K$. Let  $ \sigma :\X \times \R \rightarrow \C $ be a smooth  function  such that $ x\mapsto \sigma(x,\lambda) $ is $ K $-biinvariant and satisfies \begin{equation}\nonumber
			\left| \frac{\partial^\beta}{{\partial s}^\beta}\frac{\partial^\alpha}{\partial \lambda ^\alpha} \sigma (xa_s,\lambda)\right| \leq  {C_{\alpha,\beta}}{(1+|\lambda| )^{-\alpha-d-1}},
		\end{equation}
	for all $ \alpha=0,1,\cdots ,\dim \X+2$; $\beta=0,1,2$; for all $ x\in G, s\in \R$ and $ \lambda \in S_p $.	 Then the operator $ \Psi_\sigma $ extends to a bounded operator from $ L^p(G//K)$ to $ L^2{(G//K)} $.
\end{theorem}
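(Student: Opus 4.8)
The plan is to combine a kernel representation of $\Psi_\sigma$ with the Kunze--Stein phenomenon, the genuine difficulty being the variable-coefficient (non-convolution) nature of the operator. First I would reduce, by density of $C_c^\infty(G//K)$ in $L^p(G//K)$, to proving $\|\Psi_\sigma f\|_{L^2}\le C\|f\|_{L^p}$ for $f\in C_c^\infty(G//K)$. For such $f$ the spherical transform $\what f$ is entire, so inserting $\what f(\lambda)=\int_G f(u)\phi_\lambda(u)\,du$ into \eqref{eqn:defn_of_Tsigmaf_for_K_biinv_case_real_sym_space} and using the product formula $\phi_\lambda(x)\phi_\lambda(u)=\int_K\phi_\lambda(xku)\,dk$ for elementary spherical functions gives, after a change of variables exploiting the left $K$-invariance of $f$, the representation
\[
\Psi_\sigma f(x)=\int_G k_x(xu)\,f(u)\,du, \qquad k_x(z)=\int_\R \sigma(x,\lambda)\,\phi_\lambda(z)\,|c(\lambda)|^{-2}\,d\lambda,
\]
where $k_x$ is the inverse spherical transform of the frozen symbol $\lambda\mapsto\sigma(x,\lambda)$. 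Thus $\Psi_\sigma$ is a convolution against the $x$-dependent radial kernel $k_x$.

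The engine of the $L^p\to L^2$ bound would be the Kunze--Stein phenomenon: for a \emph{fixed} radial kernel $k\in L^2(G//K)$ one has $\|f*k\|_{L^2}\le C_p\|f\|_{L^p}\|k\|_{L^2}$ for every $1\le p<2$. I would first check that the frozen kernels lie in $L^2$ uniformly in $x$: by Plancherel $\|k_x\|_{L^2}^2=\int_\R|\sigma(x,\lambda)|^2|c(\lambda)|^{-2}\,d\lambda$, and the hypothesis with $\alpha=0$ gives $|\sigma(x,\lambda)|\le C(1+|\lambda|)^{-d-1}$, which against the Plancherel density $|c(\lambda)|^{-2}\le C(1+|\lambda|)^{d-1}$ yields a convergent integral, with bound independent of $x$. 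Moreover, integrating by parts in $\lambda$ and using the bounds for $\alpha$ up to $d+2$ should produce pointwise decay of $z\mapsto k_x(z)$ along $A^+$; in particular $k_x\in L^r(G//K)$ for all $r\ge 2$, uniformly in $x$, which I expect to need for the error analysis.

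If $\Psi_\sigma$ were an honest convolution these two steps would already give the conclusion for all $1\le p<2$; the point is that $k_x$ depends on the base point $x$, so, as emphasised in the remarks after Theorem~\ref{thm:pdo_for_sym_space}, neither Kunze--Stein nor the Herz majorising principle applies directly. The heart of the proof is to control this base-point variation, and here both the $A$-derivative hypotheses (on $\partial_s^\beta\partial_\lambda^\alpha\sigma(xa_s,\lambda)$, $\beta\le2$) and the holomorphic extension of $\lambda\mapsto\sigma(x,\lambda)$ to $S_p^\circ$ enter. My plan is to decompose $\sigma$ in the single effective Cartan variable $s$, writing $\Psi_\sigma$ as a superposition of frozen convolution operators, each carrying an extra modulation in $s$, so that the Kunze--Stein estimate above applies to each frozen piece; the $C^2$-control in $s$ (the rank-one analogue of the finitely many $x$-derivatives that suffice in the Euclidean Theorem~\ref{thm:pdo_for_family_of_symbols} once the base variable is effectively one-dimensional) should make the constants summable. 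The role of holomorphy is that controlling the $A$-translation $x\mapsto xa_s$ on the spectral side is naturally an estimate on the strip $S_p$; the complexification $\lambda\in S_p^\circ$ is what keeps the modulated frozen kernels in $L^2$ with norms that remain summable across the superposition, which is the mechanism behind the improvement of Theorem~\ref{thm:pdo_real_sym_space_K_biinv_2<p<3} over the $L^2\cap L^p\to L^2$ bound of Theorem~\ref{thm_for_L2_cap_Lp_to_Lp_K_biinv}.

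The step I expect to be the main obstacle is precisely this reconciliation of the non-convolution structure with the convolution-based Kunze--Stein inequality: one must quantify how the $x$-dependence of $\sigma$ spreads the frozen kernels and show the spread is absorbed, with summable constants, using only $C^2$-control in $s$ and the $\lambda$-decay furnished by $\alpha\le d+2$. I anticipate that tracking these exponents on the strip is what forces $\tfrac32<p<2$ (equivalently $p'\in(2,3)$): it is the range in which the half-width $\rho_p=(\frac{2}{p}-1)\rho$ of $S_p$ is small enough for the complexified variation estimate to close while the Kunze--Stein gain still delivers the $L^2$ target. Once the superposition is summed and the uniform bounds assembled, the density of $C_c^\infty(G//K)$ in $L^p(G//K)$ finishes the proof.
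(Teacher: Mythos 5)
Your reduction to the frozen-kernel representation $\Psi_\sigma f(x)=\int_G k_x(xu)f(u)\,du$ is correct, and the uniform bound $\sup_x\|k_x\|_{L^2(G//K)}<\infty$ does follow from the hypothesis with $\alpha=0$ together with $|c(\lambda)|^{-2}\lesssim(1+|\lambda|)^{d-1}$. But the heart of your argument --- ``decompose $\sigma$ in the Cartan variable $s$, apply Kunze--Stein to each frozen piece, and sum using the $C^2$-control in $s$'' --- is exactly the step you do not carry out, and as sketched it does not close. The function $s\mapsto\sigma(xa_s,\lambda)$ is only assumed bounded with two bounded derivatives; it has no decay in $s$, so its Fourier transform in $s$ is a distribution, not a function with quantitative decay, and the superposition $\int\|k_\xi\|_{L^2}\,d\xi$ you would need to sum is not under control. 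Localizing in $s$ first destroys the convolution structure that Kunze--Stein requires, so one is back to an almost-orthogonality argument that is precisely the missing content. A second, telling symptom: Kunze--Stein holds for all $1\le p<2$, and nothing in your outline genuinely produces the restriction $\tfrac32<p<2$; your suggestion that it comes from the width of the strip $S_p$ is speculation and is not where the restriction actually originates.

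The paper's proof is quite different and more elementary. Away from the origin it substitutes the Harish-Chandra expansion $\phi_\lambda(a_t)=e^{-\rho t}\bigl(e^{i\lambda t}c(\lambda)(1+a(\lambda,t))+e^{-i\lambda t}c(-\lambda)(1+a(-\lambda,t))\bigr)$ and the slice projection identity $\mathcal F\mathcal Af=\widehat f$ to write, for $q=p'\in(2,3)$,
\begin{equation*}
\eta(t)\,\Psi_\sigma f(a_t)\,e^{\frac{2\rho}{q}t}=e^{(\frac{2}{q}-1)\rho t}\,\tilde\Psi_{a_1}(\mathcal Af)(t)+e^{(\frac{2}{q}-1)\rho t}\,\tilde\Psi_{a_2}(\mathcal Af)(t),
\end{equation*}
with $a_i$ genuine Euclidean $\mathcal S^0$ symbols built from $\sigma/c(-\lambda)$ and $\sigma\,a(t,\lambda)/c(-\lambda)$. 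Euclidean pseudodifferential boundedness on $L^q(\R)$ then gives $\|\Psi_\sigma f\|_{L^q(\X),\,t\ge2}\le C\|\mathcal Af\|_{L^q(\R)}$, and the Ray--Sarkar mapping property of the Abel transform gives $\|\mathcal Af\|_{L^q(\R)}\le C\|f\|_{L^{q'}(\X)}$; this last step is valid only for $q<1/\gamma_{q'}=q/(q-2)$, i.e.\ $q<3$, which is the true source of the range $\tfrac32<p<2$. Combining the resulting $L^p\to L^{p'}$ bound with the $L^p\to L^p$ bound from Theorem \ref{thm:pdo_for_sym_space} and interpolating yields $L^p\to L^2$. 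So your proposal is not a variant of the paper's argument with details to fill in; it is a different strategy whose central summation step is open and whose predicted range of validity does not match the theorem.
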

We have the following drawbacks in the last two theorems:
\begin{enumerate}
\item We had to take more decay condition on the symbol $\sigma$ than the required $\mathcal S^0$ condition.
\item Also we did not get the required $L^2-L^2$ bound.
\end{enumerate}  
 All of these drawbacks are due to technical difficulties. We could not use direct semisimple theory to the proof; instead, we prove the theorems using boundedness of Euclidean pseudo differential operator. We  overcome these drawbacks in complex symmetric spaces (in the $K$-biinvariant case). The following theorem is an analogue of 
 Calder\'on and Vaillancourt theorem \cite[Theorem 5.1]{Tosio76}, where the assumption on the symbol is weaker than usual $S^0$ condition.
 
 \begin{theorem}\label{thm:pdo_complex-case}
 Let $\X$ be an arbitrary rank complex symmetric space.
Let $ \sigma : \X \times \mathfrak{a}^*  \rightarrow \C$ be a $C^\infty$ function, such that $x\mapsto \sigma(x, \lambda)$ is $ K $-biinvariant, $ \lambda \rightarrow \sigma(x,\lambda)$ is $ W $-invariant and satisfies the following inequalities:
\begin{equation}\label{eqn:condn_of_sigma_for_complex_higher_rank}
	\left|\frac{\partial^\beta}{{\partial H}^\beta} \, \frac{\partial^\alpha}{{\partial \lambda}^\alpha} \sigma(H,\lambda) \right| \leq C_{\alpha,\beta} 
\end{equation}	
 for  all  $\lambda \in  \mathfrak{a}^*, H\in \mathfrak{a}$ and for all multi index $\alpha, \beta$ \text{with $|\alpha|,|\beta|  \leq [\text{rank of } \X/2]+1$}.
Then the pseudo differential operator $ \Psi_\sigma $ extends to a bounded operator on $ L^2{(G//K)} $.
\end{theorem}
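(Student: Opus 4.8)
The plan is to exploit the closed product formulas available on a complex symmetric space to conjugate $\Psi_\sigma$ into a genuine Euclidean pseudo differential operator on $\mathfrak{a}\cong\R^n$, where $n=\mathrm{rank}\,\X$, and then to invoke the Calder\'on--Vaillancourt theorem \cite[Theorem 5.1]{Tosio76}. Recall that for a complex group the elementary spherical function and the Plancherel density are explicit:
\[
\phi_\lambda(\exp H)=\frac{\pi(\rho)}{\pi(i\lambda)}\,\frac{\sum_{w\in W}\det(w)\,e^{i\langle w\lambda,H\rangle}}{\mathbf{D}(H)},\qquad |c(\lambda)|^{-2}=c_0\,\pi(\lambda)^2,
\]
where $\pi(\lambda)=\prod_{\alpha\in\Sigma^+}\langle\lambda,\alpha\rangle$ and $\mathbf{D}(H)=\prod_{\alpha\in\Sigma^+}2\sinh\langle\alpha,H\rangle=\sum_{w\in W}\det(w)\,e^{\langle w\rho,H\rangle}$ is the Weyl denominator, whose square is precisely the density $\delta(H)$ entering the polar-coordinate integration on $G//K$. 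The first step is therefore to introduce the ``flattening'' map $f\mapsto F$ given by $F(H)=\mathbf{D}(H)\,f(\exp H)$. Since $f(\exp\cdot)$ is $W$-invariant while $\mathbf{D}$ is $W$-skew, $F$ is $W$-skew and $|F|^2$ is $W$-invariant, so that $\|F\|_{L^2(\mathfrak{a})}^2=|W|\int_{\mathfrak{a}^+}|F(H)|^2\,dH=|W|\,\|f\|_{L^2(G//K)}^2$; thus $|W|^{-1/2}f\mapsto F$ is an isometry of $L^2(G//K)$ onto the space of $W$-skew elements of $L^2(\mathfrak{a})$.

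The heart of the argument is the computation that this map intertwines $\Psi_\sigma$ with a Euclidean symbol operator. First I would show that the spherical transform becomes a Euclidean one: writing $\hat f(\lambda)$ as an integral over $\mathfrak{a}^+$, folding the skew-symmetric numerator of $\phi_{-\lambda}$ against the skew function $F$, and using that the Weyl chambers tile $\mathfrak{a}$, one obtains $\hat f(\lambda)=\frac{\pi(\rho)}{\pi(-i\lambda)}\,\mathcal{F}F(\lambda)$ with $\mathcal{F}F(\lambda)=\int_{\mathfrak{a}}F(H)e^{-i\langle\lambda,H\rangle}\,dH$. Substituting this, together with the two displayed formulas, into the definition of $\Psi_\sigma$ produces two simultaneous simplifications: the density $\pi(\lambda)^2$ cancels the factors $\pi(i\lambda)\pi(-i\lambda)=\pi(\lambda)^2$ coming from $\phi_\lambda$ and $\hat f$, while the $W$-invariance of $\sigma$ in $\lambda$ combined with the $W$-skewness of $\mathcal{F}F$ collapses the sum $\sum_{w}\det(w)e^{i\langle w\lambda,H\rangle}$ (after the change of variables $\lambda\mapsto w^{-1}\lambda$) to $|W|$ copies of a single Euclidean exponential. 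The net outcome is
\[
\mathbf{D}(H)\,\Psi_\sigma f(\exp H)=\int_{\mathfrak{a}^*}\sigma(H,\lambda)\,\mathcal{F}F(\lambda)\,e^{i\langle\lambda,H\rangle}\,d\lambda=\widetilde\sigma(H,D)F(H),
\]
the Euclidean pseudo differential operator on $\R^n$ with symbol $\widetilde\sigma(H,\lambda)=\sigma(H,\lambda)$, after a harmless rescaling $\lambda\mapsto 2\pi\lambda$ to match the normalization of $a(x,D)$ used in the excerpt.

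Once this identity is in place, the conclusion is immediate. The hypothesis \eqref{eqn:condn_of_sigma_for_complex_higher_rank} states exactly that $\widetilde\sigma$ obeys the Calder\'on--Vaillancourt bounds $|\partial_H^\beta\partial_\lambda^\alpha\widetilde\sigma(H,\lambda)|\le C_{\alpha,\beta}$ for all multi-indices with $|\alpha|,|\beta|\le[n/2]+1$, so $\widetilde\sigma(H,D)$ is bounded on $L^2(\mathfrak{a})=L^2(\R^n)$ by \cite[Theorem 5.1]{Tosio76}. Feeding this through the isometry of the first paragraph gives
\[
\|\Psi_\sigma f\|_{L^2(G//K)}=|W|^{-1/2}\|\mathbf{D}\,\Psi_\sigma f\|_{L^2(\mathfrak{a})}=|W|^{-1/2}\|\widetilde\sigma(H,D)F\|_{L^2(\mathfrak{a})}\le C\,\|f\|_{L^2(G//K)},
\]
which is the desired bound.

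The main obstacle is the reduction in the second paragraph rather than the boundedness input, which is a black box: the delicate part is carrying out the two cancellations cleanly and, crucially, recognizing that the $W$-invariance of $\sigma$ in the $\lambda$-variable is exactly what is needed to fold the Weyl sum into a single Euclidean frequency (without it, the operator would not descend to a Euclidean symbol operator). A secondary point to check carefully is that $\exp\colon\mathfrak{a}\to A$ transports the $H$- and $\lambda$-derivative bounds in \eqref{eqn:condn_of_sigma_for_complex_higher_rank} to uniform bounds for $\widetilde\sigma$ on $\mathfrak{a}\times\mathfrak{a}^*$, and that the rescaling matching the two Fourier normalizations does not disturb the order of derivatives required by Calder\'on--Vaillancourt; both are routine since the bound in the hypothesis is by a constant independent of $(H,\lambda)$.
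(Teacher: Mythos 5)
Your proposal is correct and follows essentially the same route as the paper: flatten by the Weyl denominator ($F=\mathbf{D}\cdot f$, which is the paper's $g=f\phi$), convert the spherical transform to a Euclidean Fourier transform via the skew-symmetry of $F$, cancel the $c$-function factors against the Plancherel density, collapse the Weyl sum using the $W$-invariance of $\sigma$ in $\lambda$, and apply Calder\'on--Vaillancourt to the resulting Euclidean operator before transferring back through the $L^2$ isometry. The only differences are notational (writing $c(\lambda)$ via $\pi(i\lambda)$ and tracking slightly different powers of $|W|$), which do not affect the argument.
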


For the boundedness of the pseudo differential operators  on nilpotent Lie groups and stratified Lie groups, we refer to \cite{BT, EPP, FR} and references there in.

 Layout of this paper is as follows: Section \ref{Preliminaries} sets the harmonic analysis background on semisimple Lie groups and symmetric spaces. In Section \ref{decomposition of PSi},  we will write $ \Psi_{\sigma} $ as singular integral operators. We will prove Theorem \ref{thm:pdo_for_sym_space} in Sections \ref{Analysis on the local part} and \ref{analysis on global part}.  We prove Theorem \ref{thm_for_L2_cap_Lp_to_Lp_K_biinv}, Theorem \ref{thm:pdo_real_sym_space_K_biinv_2<p<3} and Theorem \ref{thm:pdo_complex-case} in Section \ref{L2 theorems proof}.
In Appendix we state Coifman-Weiss transference theorem.

			\section{\textbf{Preliminaries}} \label{Preliminaries}In this section, we describe the necessary preliminaries regarding semisimple Lie groups and harmonic analysis on Riemannian symmetric spaces. These are standard and can be found, for example, in \cite{GV, H2, Helgason GA}. To make the article self-contained, we shall gather only those results  used throughout this paper.
Let $ G $ be a  noncompact connected semisimple  real rank one Lie group with finite center, with its Lie algebra $\mathfrak g$. Let $ \theta $ be a Cartan involution of $ \mathfrak{g} $ and $ \mathfrak{g} =\mathfrak{k+p} $ be the associated Cartan decompostion. Let $ K=\exp \mathfrak{k} $ be a maximal compact subgroup of $ G $ and let $ \X =G/K $ be an associated symmetric space with origin $ \textbf{0} =\{K\} $.  Let $ \mathfrak{a} $ be a maximal abelian subspace of $ \mathfrak{p} $. Since the group $ G $ is of real rank one,  dim $\mathfrak{a} =1 $.  Let $ \Sigma $ be the set of nonzero roots of the pair $ (\mathfrak{g,a}) $, and let $ W $ be the associated Weyl group. For rank one case, it is well known that  either $ \Sigma =\{-\alpha,\alpha\} $ or $ \{-2\alpha,-\alpha,\alpha,2\alpha \} $, where $ \alpha $ is a positive root  and the Weyl group $ W $ associated to $ \Sigma  $ is  \{-Id, Id\}, where Id is the identity operator. Let $ \mathfrak{a}^+ =\{ H \in \mathfrak{a} : \alpha(H)>0 \} $ be a positive Weyl chamber, and let $ \Sigma^+ $ be the corresponding set of positive roots.  In our case, $ \Sigma ^+ = \{\alpha\}$ or $ \{ \alpha, 2\alpha\} $. For any root $ \beta \in \Sigma  $, let $ \mathfrak{g}_\beta $ be the root space associated to $ \beta $. Let 
			\bes 
			\mathfrak{n} =\sum_{\beta \in \Sigma^+} \mathfrak{g}_\beta, 
			\ees and let \bes \ol { \mf{n}} =\theta(\mf{n} ).\ees  Also let 
			\bes  N =\exp  \mathfrak{n},   \ol N =\exp \ol{ \mathfrak{n}}.
			\ees
The group $ G $ has an Iwasawa decomposition \bes G= K (\exp \mathfrak{a})N,
			\ees and a Cartan decomposition 
			\bes G=K(\exp \mathfrak{a}^+) K.\ees This decomposition is unique. For each $ g \in G $, we denote $ H(g) \in \mathfrak{a} $
			and $ g^+ \in \ol {\mf{a}^+} $ are the unique elements such that
			\be\label{defn:Hg}
			g=k \exp H(g) n, k\in K, n\in N,
			\ee  and 
\be\label{defn:gplus}
g=k_1 \exp (g^+) k_2, k_1, k_2\in K.
\ee		
We also have another Iwasawa  decomposition \bes G=\ol N (\exp \mf{a})K. \ees
Let  $H_0$ be the unique element in $\mathfrak a$ such that $\alpha(H_0)=1$ and through this we identify $\mathfrak a$ with $\R$ as $t\leftrightarrow tH_0$ and $\mathfrak a_+= \{H\in \mathfrak a\mid \alpha(H)>0\}$ is identified with the set of positive real numbers.   We also identify $\mathfrak a^*$ and its complexification $\mathfrak a^*_\C$ with $\R$  and $\C$ respectively by $t\leftrightarrow t\alpha$ and  $z\leftrightarrow z\alpha$, $t\in \R$, $z\in \C$. Let $A=\exp\mathfrak{a}=\left\{a_t:=\exp(t H_0)\mid t\in\R\right\}$ and $A^+=\left\{a_t\mid t>0\right\}$.
 Let $m_1=\dim \mathfrak g_\alpha$ and $m_{2}=\dim \mathfrak g_{2\alpha}$ where $\mathfrak g_\alpha$ and
 $\mathfrak g_{2\alpha}$ are the root spaces corresponding to $\alpha$ and $2\alpha$. As usual then $\rho=\frac 12(m_1+2m_2)\alpha$ denotes the half sum of the positive roots.
By abuse of notation we will denote $\rho(H_0)=\frac 12(m_1+2m_2)$ by $\rho$.

Let $ dg, dk,  dn $ and $d\ol{n}$ be the Haar measures on the groups $ G, K, N$ and $\ol{N}$ respectively. We  normalize $ dk $ such that $ \int_K dk =1 $.  We have the following integral formulae corresponding to the Iwasawa and Cartan
decomposition respectively, which holds for any integrable function $f$:

\begin{equation}\label{eqn:integral-decom-nbar-a}
				\int_{G}f(g) dg =\int_{\ol N} \int_{\R} \int_{K} f(\ol  n a_t k)e^{2\rho t} dk dt d\ol n,
			\end{equation}
	and		
			\begin{equation}
\int_Gf(g)dg=\int_K\int_{\R^+}\int_K f(k_1a_tk_2) \Delta(t)\,dk_1\,dt\,dk_2. \label{polar}
\end{equation} 
where $\Delta(t)=(2\sinh t)^{m_1 + m_2}(2\cosh t)^{m_2}$.

\subsection{Iwasawa meets Cartan:} We will use the Iwasawa decomposition of $ G =\ol NAK $ and its connection with the Cartan decomposition. This idea was also used by \cite{Stromberg} and later in this direction   Ionescu proved the following: 

\begin{lemma}$($\cite[Lemma 3]{Ionescu 2002}$)$\label{lemma:expression_of_v-a _r}
	If $ \ol v \in \ol N $ and $ r\geq 0 $, then 
	\begin{equation}\label{eqn:expression_of_[va_r]}
		[\ol v a_r]^+ = r +H(\ol v)+ E(\ol v, r), 
	\end{equation}
	where \begin{equation}
		0\leq E(\ol v, r)\leq 2 e^{-2r}.
	\end{equation}
\end{lemma}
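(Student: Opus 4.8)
The statement merely \emph{defines} $E(\ol v,r)$ as the discrepancy $[\ol v a_r]^+-(r+H(\ol v))$, so the entire content is the two-sided estimate $0\le E(\ol v,r)\le 2e^{-2r}$. The plan is to compute both projections — the Cartan projection $[\,\cdot\,]^+$ and the Iwasawa $A$-projection $H(\cdot)$ — in terms of one explicit scalar, and then read off the bounds. The nonnegativity of $E$ is soft and holds in any rank, whereas the sharp exponent $e^{-2r}$ in the upper bound genuinely uses the geometry of the rank one space.

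\textbf{Step A (re-decomposing $\ol v a_r$ in $KAN$).} First I would write $\ol v=\kappa\,\exp H(\ol v)\,n_0$ with $\kappa\in K$, $n_0\in N$, the $KAN$ decomposition defining $H(\ol v)$. Since $A$ normalizes $N$ and $\exp H(\ol v)$ commutes with $a_r$,
\bes
\ol v a_r=\kappa\,\exp\!\big(H(\ol v)+rH_0\big)\,\big(a_r^{-1}n_0\,a_r\big).
\ees
Hence $n_r:=a_r^{-1}n_0a_r\in N$ and the $A$-component of $\ol v a_r$ in $KAN$ is exactly $H(\ol v)+rH_0$; that is, $H(\ol v a_r)=H(\ol v)+r$ as real numbers. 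Because $\mathrm{Ad}(a_r^{-1})$ contracts $\mf g_\alpha$ by $e^{-r}$ and $\mf g_{2\alpha}$ by $e^{-2r}$, the factor $n_r$ tends to the identity as $r\to\infty$. I will also use the standard fact that $\alpha(H(\ol v))\ge 0$ for $\ol v\in\ol N$.

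\textbf{Step B (lower bound, $E\ge 0$).} Kostant's convexity theorem places $H(g)$ in the convex hull of $\{w\cdot g^+:w\in W\}$; in rank one $W=\{\pm\mathrm{Id}\}$, so this reads $|H(g)|\le[g]^+$. Applying it to $g=\ol v a_r$ and combining with Step A and $H(\ol v)\ge0$ gives $r+H(\ol v)=|H(\ol v a_r)|\le[\ol v a_r]^+$, i.e. $E(\ol v,r)\ge0$. (The same inequality drops out of the $\cosh$ identity below by monotonicity of $\cosh$, using $M\ge r$.)

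\textbf{Step C (upper bound, $E\le 2e^{-2r}$).} Here I would pass from the Iwasawa to the Cartan picture through an explicit realization of $\X$ as a real/complex/quaternionic/octonionic hyperbolic space — equivalently, a fundamental representation $\pi$ in which $e^{2\mu(g^+)}=\|\pi(g)\|_{\mathrm{op}}^2$. Writing $[\ol v a_r]^+=d(\mathbf 0,\ol v a_r\cdot\mathbf 0)$ and using that the left factor $\kappa\in K$ fixes $\mathbf 0$, the distance formula of the model expresses $\cosh$ of the Cartan parameter as $e^{\,r+H(\ol v)}$ plus a correction controlled by $n_r$ and $e^{-r}$. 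In the $\mathrm{SL}(2,\R)$ model this is the clean identity $2\cosh\big([\ol v a_r]^+\big)=e^{\,r+H(\ol v)}+e^{-r}$, and the general rank one case has the same leading shape. Setting $M=r+H(\ol v)$ and using $e^M+e^{-M}\le e^M+e^{-r}=2\cosh([\ol v a_r]^+)$ together with $\log(1+t)\le t$,
\bes
[\ol v a_r]^+\le M+\log\!\big(1+e^{-r-M}\big)\le M+e^{-r-M}\le M+e^{-2r},
\ees
where the last step uses $M=r+H(\ol v)\ge r$, which is precisely where uniformity in $\ol v$ enters. This gives $E(\ol v,r)\le e^{-2r}\le2e^{-2r}$.

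\textbf{Main obstacle.} The delicate point is the exponent in the upper bound: a crude estimate by the triangle inequality or submultiplicativity of the Cartan projection only yields $E=O(e^{-r})$, because the $\mf g_\alpha$-component of the contracted factor $n_r$ decays merely like $e^{-r}$. The improvement to $e^{-2r}$ reflects that, after the large transvection $\exp(H(\ol v)+rH_0)$, the horospherical displacement produced by $n_r$ is essentially transverse to the geodesic joining $\mathbf 0$ to $\ol v a_r\cdot\mathbf 0$, so it perturbs the radial distance only to second order. Capturing this cancellation forces one to use the exact distance formula (equivalently the exact $\cosh$ identity above) rather than any soft inequality, and to handle the $\mf g_{2\alpha}$ block uniformly across the four families of rank one spaces; the constant $2$ in $2e^{-2r}$ is just the harmless slack left by $\log(1+t)\le t$.
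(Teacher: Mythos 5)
First, note that the paper does not actually prove this lemma: it is quoted verbatim from Ionescu (\cite[Lemma 3]{Ionescu 2002}) and used as a black box, so there is no in-paper proof to compare against. Judged on its own terms, your Steps A and B are correct and complete: the $KAN$ factorization $\ol v a_r=\kappa\,\exp(H(\ol v)+rH_0)\,(a_r^{-1}n_0a_r)$ does give $H(\ol v a_r)=H(\ol v)+r$, and Kostant's convexity theorem (together with $H(\ol v)\ge 0$, which the paper records as a consequence of the explicit formula for $e^{2\alpha(H(\ol v))}$) yields $r+H(\ol v)\le[\ol v a_r]^+$, i.e.\ $E\ge 0$. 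You also correctly diagnose why the triangle inequality only gives $E=O(e^{-r})$ and why an exact distance identity is needed for the exponent $2$.

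The gap is in Step C. The identity $2\cosh([\ol v a_r]^+)=e^{r+H(\ol v)}+e^{-r}$ is exact only for the real hyperbolic plane; for a general rank one space the Cartan projection of $\ol v a_r$ is governed by a formula involving \emph{both} coordinates $X\in\mf g_{-\alpha}$ and $Y\in\mf g_{-2\alpha}$ of $\ol v$ — the Cartan analogue of the quoted Iwasawa identity $e^{2\alpha(H(\ol v))}=\bigl(1+c_0|X|^2\bigr)^2+4c_0|Y|^2$ — and the assertion that it ``has the same leading shape'' is precisely the content of the lemma, not something that can be waved through. In particular, one must check that after subtracting the main term $e^{2(r+H(\ol v))}$ the remainder is bounded by a constant multiple of $e^{-2r}\,e^{2(r+H(\ol v))}$ \emph{uniformly in} $X$ and $Y$; the worry is exactly the one you name, that the $\mf g_{-\alpha}$ contribution of the contracted factor $n_r$ decays only like $e^{-r}$, and showing that it enters the radial distance only at second order requires writing the general formula down (e.g.\ $e^{2\alpha([\ol v a_r]^+)}\le e^{2(r+H(\ol v))}\bigl(1+4e^{-2r}\bigr)$, which after $\tfrac12\log(1+t)\le\tfrac12 t$ produces the stated constant $2$). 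Until that computation is carried out for all four families of rank one spaces (or reduced to them via a rank one reduction), the upper bound $E\le 2e^{-2r}$ is established only for $\mathrm{SL}(2,\R)$. To repair the proof, either supply that formula (it is derived in \cite{Ionescu 2002}, Section 3, from the explicit models) or simply cite Ionescu's lemma as the paper does.
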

Let $ P(\ol v) = e^{-\rho (H(\ol v))} $;  then for any $ \epsilon_0 >0 $ we have (see \cite[(3.11)]{Ionescu 2002})
\begin{equation}\label{eqn:L1_norm_P(v-) (1+epsilon)_is_finite}
	\int_{\ol N} P(\ol v)^{1+\epsilon_0} d\ol v =C_{\epsilon_0 }<\infty.
\end{equation}
Also we have (see \cite[Theorem 6.1, Chapter II, p. 180]{Helgason GA})  
\bes
e^{2\alpha (H(\ol v))} = \left[ 1+c_0 |X|^2\right]^2+ 4 c_0 |Y|^2 ,
\ees
where $ X $ and $ Y $ are the coordinates of $ \ol v $ in $ \ol N $ corresponding to the root spaces $ \mathfrak{g}_{-\alpha} $ and $ \mathfrak{g}_{-2\alpha} $, and $ c_0^{-1}=\frac{1}{4} (m_\alpha + m_{2\alpha}) $. Therefore,
\be
\label{eqn:H(v)-positive}
H(\ol v) \geq 0,  \,\, \text{for all } \ol v \in \ol N.
\ee

We recall the abelian group $ A $ acts as a dilation on $ \ol N  $, by the mapping
\begin{eqnarray}
	\ol n\rightarrow a \ol n a^{-1} \in \ol N.
\end{eqnarray}
Moreover if  $\delta_r$ be a dilation of $\ol N$, defined by
\bes
\delta_r(\ol n):= \exp (r H_0)\ol n \exp(-r H_0),
\ees 
then the following is true
\begin{equation}\label{eqn:dialation_formula_for_N-}
	\int_{\ol N} \mathfrak{h}(\delta_r(\ol n)) \,d\ol n =  e^{2\rho r} \int_{\ol N} \mathfrak{h}( \ol n)  \,d\ol n,
\end{equation}
for any integrable function $ \mathfrak{h} $ on $ \ol N $.


\subsection{Fourier transform}For a sufficiently nice function $f$ on $\X$, its (Helgason) Fourier transform $\widetilde{f}$ is a function defined on $\C \times K$ given by 
\be \label{defn:hft}
\widetilde{f}(\lambda,k) = \int_{G} f(g) e^{(i\lambda- \rho)H(g^{-1}k)} dg,\:\:\:\:\:\: \lambda \in \C,\:\: k \in K, 
\ee
whenever the integral exists (\cite[p. 199]{Helgason GA}). 

It is known that if $f\in L^1(\X)$ then $\widetilde{f}(\lambda, k)$ is a continuous function of $\lambda \in \R$, for almost every $k\in K$. If in addition $\widetilde{f}\in L^1(\R\times K, |c(\lambda)|^{-2}~d\lambda~dk)$ then the following Fourier inversion holds,
\be\label{hft}
f(gK)= |W|^{-1}\int_{\R\times K}\widetilde{f}(\lambda, k)~e^{-(i\lambda+\rho)H(g^{-1}k)} ~ |c(\lambda)|^{-2}d\lambda~dk,
\ee
for almost every $gK\in \X$ (\cite[Chapter III, Theorem 1.8, Theorem 1.9]{Helgason GA}), where $c(\lambda)$ is the Harish Chandra's $c$-function given by \bes c(\lambda)=\frac{2^{\rho-i\lambda} \Gamma(\frac{m_1 + m_2 +1}{2})\Gamma(i\lambda)}{\Gamma(\frac{\rho+ i\lambda}{2}) \Gamma(\frac{m_1+ 2}{4} + \frac{i\lambda}{2})}.\ees
It is normalized such that $c(-i\rho)=1$.

 Moreover, $f \mapsto \widetilde{f}$ extends to an isometry of $L^2(\X)$ onto $L^2(\R\times K, |c(\lambda)|^{-2}~d\lambda~dk )$ (\cite[Chapter III, Theorem 1.5]{Helgason GA}).

A function $f$ is called $K$-biinvariant if \bes f(k_1xk_2)=f(x) \text{ for all } x\in G, k_1, k_2\in K.\ees  We denote the set of all $K$-biinvariant functions by $\mathcal F (G//K)$.
Let $\mathbb D(G/K)$ be the algebra of $G$-invariant differential operators on $G/K$. The elementary spherical functions $\phi$ are $C^\infty$ functions and are joint eigenfunctions of all $D\in\mathbb D(G/K)$ for some complex eigenvalue $\lambda(D)$. That is $$D\phi=\lambda(D)\phi, D\in\mathbb D(G/K).$$They are parametrized by $\lambda\in\C$. The algebra $\mathbb D(G/K)$ is generated by the Laplace-Beltrami operator $ L$. Then we have, for all $\lambda\in\C, \phi_\lambda$ is a $C^\infty$ solution of  
\begin{equation}\label{phi-lambda}
L\phi=-(\lambda^2 + \rho^2)\phi.
\end{equation}
			For any $\lambda\in \C$  the elementary spherical function $\phi_\lambda$ has the following integral representation
$$\phi_\lambda(x)=\int_K e^{-(i\lambda+\rho)H(xk)}\,dk \text{ for all } x\in G.$$
The spherical transform $\what{f}$ of a  suitable $K$-biinvariant function $f$ is defined by the formula:
$$\what{f}(\lambda)=\int_Gf(x)\phi_\lambda(x^{-1})\,dx.$$
It is easy to check that for suitable $K$-biinvariant function $f$ on $G$, its (Helgason) Fourier transform $\widetilde{f}$ reduces to the spherical transform $\what{f}$.

We now list down some well known properties of the elementary spherical functions which are important for us (\cite[ Prop 3.1.4 and Chapter 4, \S 4.6]{GV}, \cite[Lemma 1.18, p. 221]{Helgason GA}).

\begin{enumerate}
\item[(1)] $\phi_\lambda(g)$ is $K$-biinvariant in $g\in G$,  $\phi_\lambda=\phi_{-\lambda}$, $\phi_\lambda(g)=\phi_\lambda(g^{-1})$.
\item[(2)] $\phi_\lambda(g)$ is $C^\infty$ in $g\in G$ and holomorphic in $\lambda\in\C$.
\item[(3)] The following inequality holds:
\bes
e^{-\rho t} \leq \phi_0(a_t)\leq \left(1+|t|\right)~e^{-\rho t}, \:\: t\geq 0.
\ees
\item[(4)] $|\phi_\lambda(x)|\leq 1$ for all $x\in G$ if and only if $\lambda\in S_1=\left\{\lambda\in \C \mid |\Im\lambda|\leq \rho\right\}$.
\item[(5)] For all $\lambda\in \R$ we have
\bes
|\phi_\lambda(g)| \leq  \phi_0(g)\leq 1.
\ees
\item [(6)]The function $\phi_\lambda$ satisfies the following identity (\cite[Chapter 4, Lemma 4.4, p. 418]{H2}):
\be\label{eqn:identity-phi}
\phi_\lambda(y^{-1}x)=\int_K e^{-(i\lambda +\rho)H(x^{-1}k)} e^{(i\lambda-\rho)H(y^{-1}k)}\,dk,
\ee
for all $x, y\in G$ and $\lambda\in\R$.
\end{enumerate}		

Let $C_c^\infty(G//K)$ be the set of all $C^\infty$ compactly supported $K$-biinvariant functions on $G$.  Also let $PW(\C)$ be the set of all entire functions $h:\C\rightarrow \C$ such that $ h $ is of exponential type $ T $ for some $ T > 0 $, that is, for each $N\in \N$, $$\sup_{\lambda\in\C}\,(1 +|\lambda|)^N |h(\lambda)| e^{-T |\Im\lambda|}<\infty$$   and let $PW(\C)_e$ be the set of all even functions in $PW(\C)$. 

Then we have the following Paley-Wiener  theorem:

\begin{theorem}\label{Paley-Wiener}
The function $f\mapsto \what{f}$ is a topological isomorphism between $C_c^\infty(G//K)$ and $PW(\C)_e$. 
\end{theorem}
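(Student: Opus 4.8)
The plan is to reduce the statement to the one-dimensional Euclidean Paley--Wiener theorem by factoring the spherical transform through the Abel transform. For $ f\in C_c^\infty(G//K) $ set
\bes
\mk{A}f(s)=e^{\rho s}\int_{\ol N} f(a_s\ol n)\,d\ol n,\qquad s\in\R,
\ees
which is an even smooth function of $ s $. Inserting the integral representation $ \phi_\lambda(x)=\int_K e^{-(i\lambda+\rho)H(xk)}\,dk $ into $ \what f(\lambda)=\int_G f(x)\phi_\lambda(x^{-1})\,dx $ and using the Iwasawa integration formula \eqref{eqn:integral-decom-nbar-a} together with the $ K $-biinvariance of $ f $, one obtains the factorization
\bes
\what f(\lambda)=\int_\R \mk{A}f(s)\,e^{i\lambda s}\,ds=\mk{F}\big(\mk{A}f\big)(\lambda),
\ees
where $ \mk{F} $ is the Euclidean Fourier transform on $ \R $. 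Thus the theorem will follow once the Abel transform is shown to be a topological isomorphism of $ C_c^\infty(G//K) $ onto the space $ C_c^\infty(\R)_e $ of even compactly supported smooth functions that preserves the support radius, i.e.\ $ f $ is supported in $ \{x:|x|\le R\} $ exactly when $ \mk{A}f $ is supported in $ [-R,R] $ (here $ |x| $ denotes the geodesic distance to the origin).

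For the forward inclusion, suppose $ f $ is supported in the ball of radius $ R $. The Abel transform preserves the support radius, so $ \mk{A}f\in C_c^\infty(\R)_e $ with support in $ [-R,R] $; this rests on the fact that in the Cartan decomposition the $ A^+ $-radial part of $ a_s\ol n $ is at least $ |s| $, so the integrand defining $ \mk{A}f(s) $ vanishes once $ |s|>R $. The classical Euclidean Paley--Wiener theorem applied to $ \mk{F}(\mk{A}f) $ then shows that $ \what f $ extends to an even entire function with, for every $ N $,
\bes
\sup_{\lambda\in\C}(1+|\lambda|)^N\,|\what f(\lambda)|\,e^{-R|\Im\lambda|}<\infty,
\ees
that is, $ \what f\in PW(\C)_e $. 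The exponential type comes out as exactly $ R $ precisely because the weight $ e^{\rho s} $ built into $ \mk{A} $ cancels the exponential growth $ \Delta(s)\sim e^{2\rho s} $ of the polar measure and the decay $ \phi_\lambda(a_s)\sim c(\lambda)e^{(i\lambda-\rho)s} $ of the spherical function; this is why one routes the argument through $ \mk{A} $ rather than estimating the polar integral directly.

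The substantive part is the converse, i.e.\ surjectivity together with support control. Given $ h\in PW(\C)_e $ of exponential type $ R $, define
\bes
f(x)=|W|^{-1}\int_\R h(\lambda)\,\phi_\lambda(x)\,|c(\lambda)|^{-2}\,d\lambda.
\ees
Since $ \phi_\lambda $ is $ K $-biinvariant and even in $ \lambda $, $ f $ is $ K $-biinvariant, and the rapid decay of $ h $ dominates the polynomial growth of $ |c(\lambda)|^{-2} $, so $ f\in C^\infty(G//K) $; that $ \what f=h $ is immediate from the spherical inversion and Plancherel theorems recorded in Section \ref{Preliminaries}. The main obstacle is to prove $ f $ is supported in $ \{x:|x|\le R\} $. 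For this I would substitute the Harish--Chandra expansion $ \phi_\lambda(a_s)=c(\lambda)\Phi_\lambda(a_s)+c(-\lambda)\Phi_{-\lambda}(a_s) $, with $ \Phi_\lambda(a_s)=e^{(i\lambda-\rho)s}\sum_{n\ge0}\Gamma_n(\lambda)e^{-2ns} $, then use the evenness of $ h $ and of $ |c(\lambda)|^{-2}=\big(c(\lambda)c(-\lambda)\big)^{-1} $ to symmetrize the two terms and write
\bes
f(a_s)=2|W|^{-1}\int_\R h(\lambda)\,\frac{\Phi_\lambda(a_s)}{c(-\lambda)}\,d\lambda .
\ees
One then shifts the contour into the upper half plane $ \Im\lambda=M $: the factor $ e^{(i\lambda-\rho)s} $ contributes $ e^{-Ms} $ while the Paley--Wiener bound gives $ |h(\lambda)|\le C_N(1+|\lambda|)^{-N}e^{RM} $, so the shifted integral is $ O\!\left(e^{-M(s-R)}\right) $, and letting $ M\to\infty $ forces $ f(a_s)=0 $ for $ s>R $. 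Justifying the shift — the holomorphy of $ \Phi_\lambda(a_s)/c(-\lambda) $ on the relevant region (the poles of the coefficients $ \Gamma_n(\lambda) $ and the zeros of $ c(-\lambda) $ must be seen not to obstruct the deformation) and the uniform convergence permitting term-by-term estimation of the Harish--Chandra series — is the technical heart of the proof and is where I expect the real difficulty; an equivalent and in some respects cleaner route is to exhibit the explicit inverse Abel transform of rank one (a composition of Weyl fractional integrals) and to check directly that it carries $ C_c^\infty(\R)_e $ back into $ C_c^\infty(G//K) $ while preserving the support radius.

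Finally, to upgrade the algebraic bijection to a \emph{topological} isomorphism, one observes that the seminorm estimates produced above are precisely the continuity estimates for $ f\mapsto\what f $ and for its inverse in the natural inductive-limit topologies on $ C_c^\infty(G//K) $ and $ PW(\C)_e $; alternatively, since both spaces are complete, the closed graph theorem promotes the continuous bijection to a topological isomorphism, completing the argument.
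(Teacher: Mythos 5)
Your proposal cannot be compared against an in-paper argument, because the paper does not prove Theorem \ref{Paley-Wiener}: it is quoted as a classical result, with the standard references (\cite{GV}, \cite{Helgason GA}) given in the preliminaries. What you have written is essentially a reconstruction of the classical Gangolli--Helgason proof, and it is correct in outline. The forward direction is exactly the route the paper itself has set up: the slice projection identity \eqref{prop:abelslice}, evenness of the Abel transform \eqref{prop:abel-even}, and the support inequality $[\ol v a_r]^+\geq r$, which in rank one is immediate from Lemma \ref{lemma:expression_of_v-a _r} together with \eqref{eqn:H(v)-positive}. For the converse, your symmetrization of the inversion integral and the contour shift through the Harish--Chandra expansion is the standard argument; one point you flag as a possible obstruction is in fact already settled by material in the paper: in rank one, $\lambda\mapsto c(-\lambda)^{-1}$ is holomorphic on all of $\Im\lambda\geq 0$ with the polynomial bounds \eqref{est: cminusla}, so zeros of $c(-\lambda)$ do not obstruct the deformation at all; the genuine technical work is the uniform (Gangolli-type) estimates $|\Gamma_n(\lambda)|\leq C(1+n)^K$ on $\Im\lambda\geq 0$, needed because the series for $\Phi_\lambda(a_s)$ converges only for $s>0$ and must be integrated term by term on the shifted contour. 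You correctly identify this, and the alternative via the explicit rank-one inverse Abel transform (Weyl fractional integrals), as the heart of the matter, but you do not carry either out; since these details are precisely the content of the cited classical sources, your sketch matches the intended provenance of the theorem rather than filling a gap the paper left open. Two minor points worth fixing if you write this up in full: the paper's normalization of $\mathcal F$ in \eqref{prop:abelslice} carries a $2\pi$ in the exponent, so your identity $\what f(\lambda)=\int_\R \mk{A}f(s)e^{i\lambda s}\,ds$ needs the convention stated explicitly for the type-$R$ claim to be exact; and the closed-graph promotion to a topological isomorphism should specify the LF-space structures on $C_c^\infty(G//K)$ and $PW(\C)_e$ for the theorem you invoke to apply.
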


The Abel transform of a suitable $K$-biinvariant function is defined by
\begin{equation*}
	\mathcal{A}(f)(t)= e^{\rho t}\int_{\ol N} f(\ol v a_t)\,  d\ol v,
\end{equation*}
whenever the integral exists.  It satisfies the following slice projection theorem: \be \label{prop:abelslice}
\mathcal F\mathcal A f(\lambda)=\widehat{f}(\lambda),
\ee
for all $\lambda\in\C$, for which both side exists. From this, it follows that for a  $ K $-biinvariant function $f$, \be\label{prop:abel-even}
\mathcal A f(t)=\mathcal A f(-t),
\ee for all $t\in\C$ for which Abel transform of $f$ exists.		
We need the following mapping property of the Abel transform \cite{RS}:
\begin{theorem}[{S. K. Ray and R.P. Sarkar}]\label{thm:Ray-Sarkar}
	Suppose $ 1\leq p<2 $ and $ \gamma_p =\frac{2}{p}-1 $. Then 
	\begin{equation}\label{eqn:result_of_Sk_ray_and_RP_sarkar}
		\left(  \int_{\R} |\mathcal{A}f(a_t)|^r e^{\rho \beta |t|} dt \right)^{\frac{1}{r}} \leq C\| f\|_{L^p(\X)}  \, \text{ for } 1\leq  r< \frac{1}{\gamma_p} , 0<\beta <\gamma_p r.
	\end{equation}
\end{theorem}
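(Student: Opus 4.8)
The plan is to realize $\mathcal{A}$, acting on the radial profile of $f$, as an explicit triangular (conjugate-Hardy) integral operator, and to reduce the asserted bound to a one-dimensional weighted Hardy inequality whose defining condition turns out to be exactly $\beta\le\gamma_p r$. Since $f$ is $K$-biinvariant, write $f=F\circ[\,\cdot\,]^+$ for its radial profile $F$ on $[0,\infty)$, so that $\|f\|_{L^p(\X)}^p=c\int_0^\infty|F(s)|^p\Delta(s)\,ds$ by \eqref{polar}; by \eqref{prop:abel-even} the function $\mathcal{A}f$ is even, so it suffices to control the integral over $t\ge0$. For $t\ge0$ one has $\mathcal{A}f(a_t)=e^{\rho t}\int_{\ol N}F([\ol v a_t]^+)\,d\ol v$, and Lemma \ref{lemma:expression_of_v-a _r} gives $[\ol v a_t]^+=t+H(\ol v)+E(\ol v,t)$ with $0\le E\le 2e^{-2t}$ and $H(\ol v)\ge0$ by \eqref{eqn:H(v)-positive}. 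Pushing the Haar measure of $\ol N$ forward under $\ol v\mapsto[\ol v a_t]^+$ turns this into a triangular integral operator $\mathcal{A}f(a_t)=\int_t^\infty F(s)\,K(t,s)\,ds$ supported on $\{s\ge t\}$.

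The heart of the matter is a pointwise bound on $K$. Using the formula $e^{2\alpha(H(\ol v))}=(1+c_0|X|^2)^2+4c_0|Y|^2$ together with $\dim\mf g_{-\alpha}=m_1$, $\dim\mf g_{-2\alpha}=m_2$ and $\rho=\tfrac12(m_1+2m_2)$, a direct volume computation shows that the pushforward density of $\ol v\mapsto H(\ol v)$ is comparable to $\sigma^{(m_1+m_2)/2-1}$ as $\sigma\to0$ and to $e^{\rho\sigma}$ as $\sigma\to\infty$ — the latter being the borderline encoded in \eqref{eqn:L1_norm_P(v-) (1+epsilon)_is_finite}, since then $\int_{\ol N}P(\ol v)^{1+\epsilon_0}\,d\ol v\asymp\int_0^\infty e^{-\rho(1+\epsilon_0)\sigma}e^{\rho\sigma}\,d\sigma<\infty$. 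Treating the bounded term $E(\ol v,t)$ as a smooth perturbation (controlled by Lemma \ref{lemma:expression_of_v-a _r}) then yields $K(t,s)\lesssim e^{\rho t}(s-t)^{(m_1+m_2)/2-1}$ for $s-t\le1$ and $K(t,s)\lesssim e^{\rho s}$ for $s-t\ge1$. Establishing this \emph{pointwise} kernel bound — rather than the cruder bound on the cumulative distribution, and in particular absorbing the perturbation $E$ without destroying $L^p$-control — is the step I expect to be the main obstacle; in rank one it can alternatively be read off the explicit realization of $\mathcal{A}$ as a Weyl-type fractional integral.

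Granting the kernel bound, split $\mathcal{A}=\mathcal{A}_{\mathrm{loc}}+\mathcal{A}_{\mathrm{glob}}$ along the regions $\{s-t\le1\}$ and $\{s-t\ge1\}$. For the global part $|\mathcal{A}_{\mathrm{glob}}f(a_t)|\lesssim\int_t^\infty|F(s)|e^{\rho s}\,ds$; setting $G(s)=|F(s)|e^{\rho s}$ and using $\Delta(s)\asymp e^{2\rho s}$ at infinity, the claim becomes the weighted Hardy inequality
\[
\Big(\int_0^\infty\Big|\int_t^\infty G\Big|^r e^{\rho\beta t}\,dt\Big)^{1/r}\le C\Big(\int_0^\infty|G(s)|^p e^{\rho(2-p)s}\,ds\Big)^{1/p}
\]
for the conjugate Hardy operator, with weights $w(t)=e^{\rho\beta t}$ and $u(s)=e^{\rho(2-p)s}$. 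By the Muckenhoupt--Bradley criterion this holds iff $\sup_{\tau>0}(\int_0^\tau w)^{1/r}(\int_\tau^\infty u^{1-p'})^{1/p'}<\infty$, and evaluating the integrals this supremum is $\asymp\sup_\tau e^{\rho\tau(\beta/r-\gamma_p)}$, finite exactly when $\beta\le\gamma_p r$ — precisely the hypothesis, the strict inequality $\beta<\gamma_p r$ producing a genuine exponential gain. Since $p=1$ forces the empty range $r<1$, one assumes $1<p<2$, so $p'<\infty$; the case $r<p$ uses Bradley's integral condition, verified identically thanks to the exponential gap.

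It remains to treat $\mathcal{A}_{\mathrm{loc}}$: on $\{|s-t|\le1\}$ one has $e^{\rho t}\asymp e^{\rho s}$ and $e^{\rho\beta t}\asymp1$, so after removing the matched exponential factor this is convolution against the integrable, near-diagonally-supported kernel $\sigma^{(m_1+m_2)/2-1}\mathbf{1}_{(0,1)}(\sigma)$, a one-sided fractional integral whose $L^p\to L^r$ bound in the stated range is classical (Young / Hardy--Littlewood--Sobolev). As a consistency check on the exponent, note that $f\in L^p(\X)$ forces $\what f$ to extend holomorphically to the strip $|\Im\lambda|<\gamma_p\rho$ (because $\phi_\lambda\in L^{p'}(\X)$ exactly there), whence by the slice projection \eqref{prop:abelslice} the transform $\mathcal{A}f=\mathcal F^{-1}\what f$ decays like $e^{-\gamma_p\rho|t|}$ — exactly the decay recorded by the weight $e^{\rho\beta|t|}$ with $\beta<\gamma_p r$.
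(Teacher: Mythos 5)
Your strategy — realizing $\mathcal A$ as a triangular kernel operator via the pushforward of $d\ol v$ under $\ol v\mapsto[\ol v a_t]^+$ and reducing to a weighted Hardy inequality — is genuinely different from the paper's proof, which contains no kernel estimates at all. The paper instead interpolates in Lorentz spaces between two known endpoints: the elementary $L^1$-type bounds $\int_\R|\mathcal Af(a_t)|e^{\rho\gamma_p|t|}\,dt\lesssim\|f\|_{L^{p,1}(\X)}$ (which follow from the slice projection \eqref{prop:abelslice} and Fubini) and Ionescu's Kunze--Stein endpoint $\|\mathcal Af\|_{L^\infty(\R,\,e^{\rho|t|}dt)}\lesssim\|f\|_{L^{2,1}(\X)}$; two applications of real interpolation then give \eqref{eqn:result_of_Sk_ray_and_RP_sarkar}. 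Your weighted-Hardy computation is correct and illuminating — the exponent arithmetic $\beta/r-\gamma_p$ does reproduce exactly the stated threshold — but the interpolation route buys a complete proof from off-the-shelf inputs, whereas yours requires new pointwise information about the Abel kernel.

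And that is where the genuine gap sits, as you yourself flag. The bound $K(t,s)\lesssim e^{\rho t}(s-t)^{(m_1+m_2)/2-1}$ for $s-t\le1$ does not follow from ``treating $E(\ol v,t)$ as a smooth perturbation'': the pushforward of a measure under a perturbed map is not controlled by the pushforward under the unperturbed map (a bounded perturbation can concentrate mass, even create atoms), so one needs quantitative non-degeneracy of $\ol v\mapsto H(\ol v)+E(\ol v,t)$, uniformly in $t$. Worse, for small $t$ the term $E$ is not a perturbation but the dominant term: at $t=0$ one has $[\ol v]^+\asymp|\ol v|$ while $H(\ol v)\asymp|\ol v|^2$ near the identity, so the true near-diagonal density is $\asymp(s-t)^{d-2}$, not $(s-t)^{(d-1)/2-1}$, and your claimed exponent is simply not the local one. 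Relatedly, the treatment of $\mathcal A_{\mathrm{loc}}$ is not a plain Young's inequality: on the region $s,t\lesssim1$ the measure is $\Delta(s)\,ds\asymp s^{d-1}ds$, so after normalizing $G(s)=F(s)s^{(d-1)/p}$ the local operator carries the singular weight $s^{(d-1)(1/2-1/p)}$ (negative power since $p<2$) in addition to the Riemann--Liouville factor $(s-t)^{(d-3)/2}$; the scaling exponents do work out in the stated range, but this requires a weighted fractional-integration lemma, not Young. (Also, $e^{\rho\beta t}\asymp1$ on $|s-t|\le1$ is false as written; you mean $e^{\rho\beta(t-s)}\asymp1$.) Until the uniform pointwise kernel bound and the weighted local estimate are supplied, the argument is a program rather than a proof.
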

Putting $r=p$ in (\ref{eqn:result_of_Sk_ray_and_RP_sarkar}) for $1<p<2$ we get
\bes
\left(  \int_{\R} |\mathcal{A}f(a_t)|^p dt \right)^{\frac{1}{p}} \leq C\| f\|_{L^p(\X)}.
\ees


	\subsection{Spherical function} In our study of  $ L^p $ boundedness of $ \Psi_{\sigma} $, representation  of spherical function $ \phi_{\lambda} $ is crucial. Although, such formulations   of   spherical function are well known and ubiquitous in the literature, for completeness we briefly recall the explicit expression of $ \phi_{\lambda} (a_t)$ for small and large values of $ t $. 
	
	 Let $J_\mu(z)$ be the Bessel functions of first kind and 
		 \bes
		 \mathcal J_\mu(z)=\frac{J_\mu(z)}{z^\mu} \Gamma(\mu + \frac{1}{2}) \Gamma(\frac{1}{2}) 2^{\mu-1},
		 \ees
		 and \bes
		 c_0=\pi^{\frac{1}{2}} 2^{\frac{m_1+2m_{2}}{2}-2}\frac{\Gamma(\frac{d-1}{2})}{\Gamma(\frac{d}{2})}.
		 \ees
		 We have the following expansion of $ \phi_{\lambda} $ near origin:
			\begin{theorem}$($\cite[Theorem 2.1]{Stanton and Tomas}$)$\label{thm:localexpansion-phi}
				There exist $ R_0>1 $ and $ R_1>1  $ such that for any $ t $ with $ 0\leq t\leq R_0 $ and any $ M\geq 0 $,
				\begin{eqnarray}\nonumber
					\phi_{\lambda}(a_t) =c_0 \left[ \frac{t^{d-1}}{\Delta(t)}\right]^{\frac{1}{2}} \sum_{m=0}^{\infty} t^{2m} a_m(t)  \mathcal{J}_{(d-2)/2 +m}(\lambda t)	 	 \\
				\nonumber	\phi_{\lambda}(a_t) =c_0 \left[ \frac{t^{d-1}}{\Delta(t)}\right]^{\frac{1}{2}} \sum_{m=0}^{M} t^{2m} a_m(t)  \mathcal{J}_{(d-2)/2 +m}(\lambda t)	+E_{M+1}(\lambda t) 	  		 	 
				\end{eqnarray}
				where \begin{align}\nonumber
					a_0(t) &=1\\ \notag
					|a_m(t)|& \leq C R_1^{-m}
				\end{align}
				\begin{align} \nonumber 
				|E_{M+1}(\lambda t)| &\leq C_M \,t^{2(M+1)} \hspace{3.5cm} \text{ if }|\lambda t|\leq 1\\
					&\leq \nonumber C_M \, t^{2(M+1)} (\lambda t )^{-((d-1)/2 +M+1)} \hspace*{.3cm} \text{ if }|\lambda t|>1.
				\end{align}
			\end{theorem}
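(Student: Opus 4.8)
The plan is to reduce the statement to the radial part of the eigenvalue equation and to recognize the resulting ODE as a perturbation of a Bessel equation. Since $\phi_\lambda$ is $K$-biinvariant and satisfies $L\phi_\lambda=-(\lambda^2+\rho^2)\phi_\lambda$ by \eqref{phi-lambda}, the function $u(t):=\phi_\lambda(a_t)$ is the unique solution, regular (even) at $t=0$, of
\[
u''(t)+\frac{\Delta'(t)}{\Delta(t)}\,u'(t)+(\lambda^2+\rho^2)\,u(t)=0,\qquad u(0)=1,\ u'(0)=0,
\]
where $\Delta(t)=(2\sinh t)^{m_1+m_2}(2\cosh t)^{m_2}$. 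Writing $A(t)=\Delta'(t)/\Delta(t)=(m_1+m_2)\coth t+m_2\tanh t$, one has near the origin $A(t)=\tfrac{d-1}{t}+A_0(t)$ with $A_0$ odd and real-analytic. Setting $\nu:=\tfrac{d-2}{2}$ and substituting $u(t)=\big[t^{d-1}/\Delta(t)\big]^{1/2}F(t)$, a direct computation that removes the first-order term and cancels the centrifugal singularity $\tfrac{(d-1)(d-3)}{4t^2}$ shows that $F$ solves the perturbed Bessel equation
\[
F''+\frac{2\nu+1}{t}\,F'+\lambda^2 F=\tilde V(t)\,F,
\]
where $\tilde V:=V-\rho^2$ with $V$ the real-analytic, even part of $\tfrac14A^2+\tfrac12A'$ on $[0,R_0]$.

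Next I would construct $F$ as a series $F(t)=\sum_{m\ge0}t^{2m}a_m(t)\,\mathcal{J}_{\nu+m}(\lambda t)$ with $\lambda$-independent coefficients, starting from $a_0\equiv1$ so that the leading term is the regular solution $\mathcal{J}_\nu(\lambda t)$. The corrections are generated by the variation-of-parameters (Duhamel) operator $\mathcal{G}_\nu$ inverting $\mathcal{L}_\nu+\lambda^2$, where $\mathcal{L}_\nu:=\partial_t^2+\tfrac{2\nu+1}{t}\partial_t$, via the integral equation $F=\mathcal{J}_\nu(\lambda\,\cdot)+\mathcal{G}_\nu(\tilde V F)$, which I solve by iteration. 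The crucial point, which keeps every coefficient independent of $\lambda$, is that solving $\mathcal{L}_\nu+\lambda^2$ against a Bessel source yields a Bessel function of order one higher with a $\lambda$-free prefactor: since $(\mathcal{L}_\nu+\lambda^2)\partial_\lambda\mathcal{J}_\nu(\lambda t)=-2\lambda\,\mathcal{J}_\nu(\lambda t)$ and $\partial_\lambda\mathcal{J}_\nu(\lambda t)=-\tfrac{\lambda t^2}{2\nu+1}\mathcal{J}_{\nu+1}(\lambda t)$ (the raising relation $\tfrac{d}{dt}\mathcal{J}_\mu(\lambda t)=-\tfrac{\lambda^2 t}{2\mu+1}\mathcal{J}_{\mu+1}(\lambda t)$), the constant part of $\tilde V$ produces the first correction $a_1\equiv\tfrac{\tilde V(0)}{2(2\nu+1)}$ multiplying $t^2\mathcal{J}_{\nu+1}(\lambda t)$, with all factors of $\lambda$ cancelling. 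Iterating, and expanding $\tilde V$ in its even Taylor series, each step raises the Bessel order by one and gains a factor $t^2$; reorganizing the iterates gives the stated form together with a recursion expressing $a_m$ as a quadrature in $a_0,\dots,a_{m-1}$ and the Taylor coefficients of $\tilde V$. The constant $c_0$ is then fixed by $\phi_\lambda(e)=1$ and $\lim_{t\to0}\Delta(t)/t^{d-1}=2^{m_1+2m_2}$.

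To prove convergence and the bound $|a_m(t)|\le C R_1^{-m}$, I would induct on $m$ over the fixed interval $[0,R_0]$: since $\tilde V$ is bounded there and each application of $\mathcal{G}_\nu$ contributes a factor $t^2\le R_0^2$ together with the shrinking normalizing constants (of size $\tfrac{1}{2(2\nu+2m\pm1)}$) produced by the Bessel identities above, the map $a_{m-1}\mapsto a_m$ contracts by a factor that can be forced below $R_1^{-1}<1$ after choosing $R_0$ appropriately. This yields geometric decay, hence absolute and uniform convergence of $\sum_m t^{2m}a_m(t)\mathcal{J}_{\nu+m}(\lambda t)$ for $0\le t\le R_0$ and $\lambda\in\R$, proving the first identity. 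For the truncation, $E_{M+1}(\lambda t)=c_0\big[t^{d-1}/\Delta(t)\big]^{1/2}\sum_{m>M}t^{2m}a_m(t)\mathcal{J}_{\nu+m}(\lambda t)$, and I would estimate it using the elementary bounds $|\mathcal{J}_\mu(z)|\le C$ for $|z|\le1$ and $|\mathcal{J}_\mu(z)|\le C|z|^{-\mu-1/2}$ for $|z|>1$ (from $J_\mu(z)=O(z^\mu)$ at $0$ and $J_\mu(z)=O(z^{-1/2})$ at infinity). For $|\lambda t|\le1$ the geometric tail in $t^{2m}$ gives $O(t^{2(M+1)})$; for $|\lambda t|>1$ the dominant term $m=M+1$ gives $t^{2(M+1)}(\lambda t)^{-(\nu+M+1)-1/2}=t^{2(M+1)}(\lambda t)^{-((d-1)/2+M+1)}$, and the remaining tail is controlled by the same factor, which is exactly the two stated bounds.

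The hard part will be the construction of the coefficients together with the uniform-in-$\lambda$ control, rather than the reduction or the final estimates. The subtlety is that $\{\mathcal{J}_{\nu+m}(\lambda t)\}_m$ are not independent functions of $t$ (they satisfy the Bessel recurrences), so the expansion cannot be obtained by naive term-by-term coefficient matching; one is forced into the Duhamel iteration, and it is precisely the resonant step solved through $\partial_\lambda\mathcal{J}_\nu$ that guarantees both that the $a_m$ come out $\lambda$-independent and that each iteration raises the order by exactly one while gaining $t^2$. The delicate quantitative point is keeping all constants uniform in $\lambda$ and in the growing Bessel order $\nu+M+1$ in the large-argument regime, where $|\mathcal{J}_\mu(z)|\le C|z|^{-\mu-1/2}$ must hold with $C$ controlled as $\mu$ increases; this is what determines the thresholds $R_0>1$ and the geometric ratio $R_1>1$.
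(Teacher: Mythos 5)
This theorem is quoted in the paper directly from Stanton--Tomas \cite{Stanton and Tomas} with no proof of its own, so there is no internal argument to compare against; your proposal is, in outline, a faithful reconstruction of the original Stanton--Tomas proof (radial eigenvalue ODE, the substitution $u=[t^{d-1}/\Delta(t)]^{1/2}F$ reducing to a perturbed Bessel equation, a Bessel series with $\lambda$-independent coefficients generated by the raising relation $\frac{d}{dt}\mathcal{J}_\mu(\lambda t)=-\frac{\lambda^2 t}{2\mu+1}\mathcal{J}_{\mu+1}(\lambda t)$, geometric bounds on the $a_m$, and tail estimates from Bessel asymptotics). The one point you rightly flag as delicate --- a bound $|\mathcal{J}_\mu(z)|\le C|z|^{-\mu-1/2}$ for $|z|>1$ cannot hold with $C$ uniform in $\mu$ near the turning point $z\sim\mu$, so the tail of the error series must be summed with order-dependent constants played off against the geometric decay $R_1^{-m}$ --- is exactly how the original source handles it, and it does not affect the correctness of your outline.
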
			
		 
Next we have the following expansion of $ \phi_{\lambda} $ away from origin.
			
			\begin{proposition} $($\cite[Prop. A1]{Ionescu 2002}$)$\label{propn:global_expansion_of_phi_lambda}
				If $ t\geq 1/10 $ and $ N\in\N $, then $ \phi_\lambda(a_t) $ can be written in the following form
				\begin{equation}\label{eqn:phi-lambda-assymp-expansion}
					\phi_\lambda(a_t)= e^{-\rho t} \left( e^{i\lambda t}c(\lambda)\left(1+a(\lambda,t)\right) +e^{-i\lambda t} c(-\lambda) \left(1+a(-\lambda,t)\right) \right),
				\end{equation}
				where the function $ a(\lambda,t) $ satisfies the following inequalities,
				\begin{equation}\label{eqn:est_for_a(lambda,r)}
					\left|  \frac{\partial^\alpha}{\partial \lambda^\alpha} \frac{\partial^l}{\partial t^l} a(\lambda,t) \right| \leq C (1+|\Re \lambda | )^{-\alpha},
				\end{equation}
				for all integers $ \alpha\in [0,N] $, $ l\in \{0,1\} $, and for all $ \lambda  $ in the region $ 0\leq \Im \lambda \leq  \rho+1/10. $
			\end{proposition}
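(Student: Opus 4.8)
The plan is to realize $\phi_\lambda(a_t)$ through the radial part of the Laplace--Beltrami operator and the associated Harish-Chandra series, and then to read off both the expansion (\ref{eqn:phi-lambda-assymp-expansion}) and the estimates (\ref{eqn:est_for_a(lambda,r)}) from an explicit recursion for the series coefficients. Recall from (\ref{phi-lambda}) that $u(t)=\phi_\lambda(a_t)$ solves
\begin{equation*}
u''(t) + \frac{\Delta'(t)}{\Delta(t)}\,u'(t) + (\lambda^2 + \rho^2)\,u(t) = 0,
\end{equation*}
where $\Delta(t) = (2\sinh t)^{m_1+m_2}(2\cosh t)^{m_2}$. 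Since $\coth t\to 1$ and $\tanh t\to 1$ as $t\to\infty$, the coefficient $\Delta'/\Delta = (m_1+m_2)\coth t + m_2\tanh t$ admits, for $t$ bounded away from $0$, an absolutely convergent expansion $\Delta'(t)/\Delta(t) = 2\rho + \sum_{j\geq 1} g_j\, e^{-2jt}$ with constants satisfying $|g_j|\leq 4\rho$.

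First I would look for solutions of the form $\Phi_{\pm\lambda}(a_t)=e^{(\pm i\lambda-\rho)t}\sum_{k\geq 0}\Gamma_k(\pm\lambda)\,e^{-2kt}$ with $\Gamma_0\equiv 1$. Substituting into the ODE, the coefficient of $e^{(i\lambda-\rho)t}$ cancels identically (the indicial equation), and for $k\geq 1$ one obtains the recursion
\begin{equation*}
4k\,(k-i\lambda)\,\Gamma_k(\lambda) = -\sum_{j=1}^{k} g_j\,\big(i\lambda-\rho-2(k-j)\big)\,\Gamma_{k-j}(\lambda).
\end{equation*}
Setting $a(\lambda,t)=\sum_{k\geq 1}\Gamma_k(\lambda)\,e^{-2kt}$ gives $\Phi_\lambda(a_t)=e^{(i\lambda-\rho)t}(1+a(\lambda,t))$, and the classical connection formula $\phi_\lambda(a_t)=c(\lambda)\Phi_\lambda(a_t)+c(-\lambda)\Phi_{-\lambda}(a_t)$ (the rank one case of $\phi_\lambda=\sum_{s\in W}c(s\lambda)\Phi_{s\lambda}$) then yields exactly (\ref{eqn:phi-lambda-assymp-expansion}). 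Near $\lambda=0$ the two summands individually blow up through the pole of $c$ at the origin, but their sum stays regular because $\phi_\lambda$ is entire in $\lambda$, so the identity persists there by analytic continuation.

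The heart of the matter is the estimate (\ref{eqn:est_for_a(lambda,r)}), which I would derive from the uniform coefficient bound
\begin{equation*}
\left|\frac{\partial^\alpha}{\partial\lambda^\alpha}\Gamma_k(\lambda)\right| \leq C_\alpha\,(1+k)^{D}\,(1+|\Re\lambda|)^{-\alpha},\qquad 0\leq\alpha\leq N,
\end{equation*}
valid on the strip $0\leq\Im\lambda\leq\rho+1/10$. The crucial elementary fact is that on this strip the denominator obeys $|k-i\lambda|\geq\max(k,|\Re\lambda|)$, so the poles $\lambda=-ik$ lie strictly below the strip, and division by $4k(k-i\lambda)$ simultaneously supplies the factor $k^{-1}$ that drives convergence and converts each $\lambda$-differentiation into a gain of $(1+|\Re\lambda|)^{-1}$. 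I would prove the displayed bound by a double induction, on $k$ and on $\alpha$, directly from the recursion, applying the Leibniz rule to the products $(k-i\lambda)^{-1}$ and using the boundedness of the $g_j$. With this in hand, the series for $a(\lambda,t)$ and for its $t$-derivative (which merely inserts the polynomially bounded factor $-2k$) converge absolutely and uniformly for $t\geq 1/10$, since $e^{-2kt}\leq e^{-k/5}$ dominates the polynomial growth in $k$; term-by-term differentiation is thereby justified, and summing the resulting geometric-type series produces (\ref{eqn:est_for_a(lambda,r)}) for $l\in\{0,1\}$.

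The main obstacle is precisely the inductive verification of the per-derivative gain $(1+|\Re\lambda|)^{-\alpha}$ uniformly over the entire strip: the polynomial growth of $\Gamma_k$ in $k$ must be tracked simultaneously with the decay in $|\Re\lambda|$, and one must ensure that the inductive constants $C_\alpha$ and the exponent $D$ do not deteriorate as $k\to\infty$. By contrast, setting up the radial ODE and the recursion, and establishing convergence for $t\geq 1/10$, are routine once the lower bound $|k-i\lambda|\geq\max(k,|\Re\lambda|)$ has been recorded.
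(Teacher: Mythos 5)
Your proposal is correct and follows essentially the same route as the source this paper cites for the statement (the paper gives no proof of its own, quoting \cite[Prop. A1]{Ionescu 2002}): the radial ODE for $\phi_\lambda$, the Harish-Chandra series $\Phi_{\pm\lambda}(a_t)=e^{(\pm i\lambda-\rho)t}\sum_k\Gamma_k(\pm\lambda)e^{-2kt}$ with precisely the recursion $4k(k-i\lambda)\Gamma_k(\lambda)=-\sum_{j=1}^{k}g_j\bigl(i\lambda-\rho-2(k-j)\bigr)\Gamma_{k-j}(\lambda)$ and $|g_j|\leq 4\rho$ (both of which check out against $\Delta'/\Delta=(m_1+m_2)\coth t+m_2\tanh t$), the connection formula $\phi_\lambda=c(\lambda)\Phi_\lambda+c(-\lambda)\Phi_{-\lambda}$ with the removable singularity at $\lambda=0$ handled by analyticity of $\phi_\lambda$, and inductive symbol bounds on $\partial_\lambda^\alpha\Gamma_k$ driven by $|k-i\lambda|\geq\max(k,|\Re\lambda|)$, which holds on the strip $0\leq\Im\lambda\leq\rho+1/10$ exactly because the poles $\lambda=-ik$ lie below it. The double induction you flag as the main obstacle closes in the standard way (bounding $M_k=\max_{\beta\leq\alpha}|\partial_\lambda^\beta\Gamma_k|(1+|\Re\lambda|)^{\beta}$ by $Ck^{-1}\sum_{j<k}M_j$, whence $M_k\lesssim(1+k)^{C}$ uniformly on the strip), so the polynomial growth in $k$ is absorbed by $e^{-2kt}\leq e^{-k/5}$ for $t\geq 1/10$, as you say.
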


The following two lemmas give the estimates of $c(-\lambda)^{-1}$ and $|c(-\lambda)|^{-2}$, along with their derivatives.		
\begin{lemma}$($\cite[App. A]{Ionescu 2000}$)$
The function $ \lambda \rightarrow c(-\lambda)^{-1} $ is holomorphic inside the region $ \Im \lambda\geq 0 $ and satisfies the estimates
			
			\begin{equation}\label{est: cminusla}
				\left|	\frac{d^\alpha}{d\lambda^\alpha} c(-\lambda)^{-1}\right | \leq C_\alpha(1+|\lambda|)^{\frac{d-1}{2}-\alpha},
			\end{equation}
			for all integers $ \alpha = 0,1,... N$ and for all $ \lambda  $ with  $ 0\leq \Im \lambda \leq \rho $.
\end{lemma}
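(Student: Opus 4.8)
The plan is to prove the statement by direct analysis of the explicit Gamma-function formula for the $c$-function recorded above: I would write $c(-\lambda)^{-1}$ out, read off holomorphy from the location of the poles of $\Gamma$, and then reduce the expression to a product of two ratios of Gamma functions to which a \emph{differentiable} form of Stirling's asymptotics applies. First I would substitute $-\lambda$ for $\lambda$ in the formula for $c(\lambda)$ and take reciprocals, obtaining
\[
c(-\lambda)^{-1} = \frac{2^{-\rho-i\lambda}\,\Gamma\!\big(\tfrac{\rho-i\lambda}{2}\big)\,\Gamma\!\big(\tfrac{m_1+2}{4}-\tfrac{i\lambda}{2}\big)}{\Gamma\!\big(\tfrac{m_1+m_2+1}{2}\big)\,\Gamma(-i\lambda)}.
\]
Setting $z=-i\lambda$, so that $\Re z=\Im\lambda\ge 0$ on the region in question, the two numerator factors have arguments with real parts $\tfrac{\rho+\Im\lambda}{2}\ge\tfrac{\rho}{2}>0$ and $\tfrac{m_1+2}{4}+\tfrac{\Im\lambda}{2}>0$, so neither ever meets a pole of $\Gamma$; since $1/\Gamma$ is entire, the factor $1/\Gamma(-i\lambda)$ is holomorphic throughout (it only contributes a zero at $\lambda=0$). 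This gives the holomorphy assertion on $\Im\lambda\ge 0$.

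For the estimates I would apply the Legendre duplication formula $\Gamma(z)=\frac{2^{z-1}}{\sqrt\pi}\Gamma(\tfrac z2)\Gamma(\tfrac z2+\tfrac12)$ to the denominator, so that $1/\Gamma(-i\lambda)$ acquires a factor $2^{1-z}$ which, against the prefactor $2^{-\rho-i\lambda}=2^{-\rho}2^{z}$, cancels all $z$-dependent exponentials and leaves a bounded constant. After this cancellation the function takes the form
\[
c(-\lambda)^{-1} = C\,\frac{\Gamma\!\big(\tfrac z2+\tfrac\rho2\big)}{\Gamma\!\big(\tfrac z2\big)}\,\frac{\Gamma\!\big(\tfrac z2+\tfrac{m_1+2}{4}\big)}{\Gamma\!\big(\tfrac z2+\tfrac12\big)}.
\]
Now I would invoke the ratio asymptotic $\Gamma(w+a)/\Gamma(w+b)=w^{a-b}\big(1+O(1/w)\big)$, valid for $|\arg w|\le\pi-\delta$, with $w=z/2$; here $\arg w\to\pm\tfrac\pi2$ stays safely inside the sector and $|z|$ is comparable to $1+|\lambda|$. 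The exponents are $\tfrac\rho2$ and $\tfrac{m_1+2}{4}-\tfrac12=\tfrac{m_1}{4}$, whose sum is $\tfrac{m_1+2m_2}{4}+\tfrac{m_1}{4}=\tfrac{m_1+m_2}{2}=\tfrac{d-1}{2}$. This yields the case $\alpha=0$ of the claimed bound.

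For the derivatives I would use that this ratio expansion holds to all orders and may be differentiated term by term, or equivalently pass to the logarithmic derivative and use the polygamma asymptotics $\psi^{(k)}(w)\sim(-1)^{k-1}(k-1)!\,w^{-k}$: each differentiation in $\lambda$ is $-i$ times a differentiation in $z$ and lowers the power of $|z|$ by one, producing exactly the gain $(1+|\lambda|)^{-\alpha}$. For $|\lambda|$ bounded the estimate is immediate from holomorphy and continuity on the compact strip $0\le\Im\lambda\le\rho$, and it remains only to glue the two regimes. I expect the main obstacle to be making the differentiated asymptotics rigorous and \emph{uniform} over the whole strip, in particular controlling the derivatives of the error terms. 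Because we are confined to a horizontal strip of finite height, the per-derivative gain cannot be extracted by naive Cauchy estimates on expanding disks, which would leave the region; the gain must instead come from the structure of the Stirling/polygamma expansion. This bookkeeping, rather than any conceptual difficulty, is where the actual work lies.
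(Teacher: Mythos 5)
Your proposal is correct; the paper itself states this lemma without proof, deferring to Appendix A of Ionescu's paper, and your argument — reading off holomorphy from the pole locations of the Gamma factors, cancelling the exponentials via the Legendre duplication formula, and extracting the exponent $\tfrac{\rho}{2}+\tfrac{m_1}{4}=\tfrac{d-1}{2}$ together with the per-derivative gain from the differentiated Stirling/polygamma asymptotics of $\Gamma(w+a)/\Gamma(w+b)$ — is essentially that standard proof. The one technical point you flag (uniformity of the differentiated asymptotics on the strip, where Cauchy estimates on large disks are unavailable) is handled exactly as you suggest, via the logarithmic derivative $\psi(w+a)-\psi(w+b)=(a-b)/w+O(w^{-2})$ and $\psi^{(k)}(w)=O(w^{-k-1})$, uniformly in the sector $|\arg w|\le\pi/2$.
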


			\begin{lemma} $($ \cite[Lemma 4.2]{Stanton and Tomas}$)$ \label{lemma:est_of_|c(-lambda)|^-2}
				We have \begin{equation}\label{estimate of |c(-lambda)|^-2}
					\left|	\frac{d^\alpha}{d\lambda^\alpha} |c(-\lambda)|^{-2}\right | \leq C_\alpha(1+|\lambda|)^{d-1-\alpha},
				\end{equation} 
				for all $ \lambda \in \R $ and $ \alpha \in [0,N] $. 
			\end{lemma}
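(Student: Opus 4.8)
The plan is to exploit the explicit expression of the Harish-Chandra $c$-function in terms of Gamma functions and to read off the growth of $|c(-\lambda)|^{-2}$ together with its derivatives from Stirling's asymptotics. First I would observe that for real $\lambda$ one has $c(-\lambda)=\overline{c(\lambda)}$ (since $\rho$, $m_1$, $m_2$ are real and $\overline{\Gamma(z)}=\Gamma(\bar z)$), so that
\[
|c(-\lambda)|^{-2}=|c(\lambda)|^{-2}=c(\lambda)^{-1}c(-\lambda)^{-1}
\]
is an even function of $\lambda$. From the displayed formula for $c$,
\[
c(\lambda)^{-1}=\frac{2^{i\lambda-\rho}}{\Gamma(\tfrac{m_1+m_2+1}{2})}\,\frac{\Gamma(\tfrac{\rho+i\lambda}{2})\,\Gamma(\tfrac{m_1+2}{4}+\tfrac{i\lambda}{2})}{\Gamma(i\lambda)},
\]
and since $1/\Gamma$ is entire while the two numerator Gamma factors have poles only at $\lambda\in i(\rho+2\N)$ on the imaginary axis, the product $|c(\lambda)|^{-2}$ has no singularities for $\lambda\in\R$; in particular it is real-analytic, hence $C^\infty$, on the whole line.

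Next I would split the estimate into the ranges $|\lambda|\le 1$ and $|\lambda|\ge 1$. On the compact set $\{|\lambda|\le 1\}$ the smoothness just established bounds every derivative by a constant, and since $(1+|\lambda|)^{d-1-\alpha}$ is bounded below there by a positive constant, \eqref{estimate of |c(-lambda)|^-2} holds trivially. The substance of the lemma lies in the region $|\lambda|\ge 1$, where I would set $F(\lambda)=\log|c(\lambda)|^{-2}$, a finite linear combination of terms $\log\Gamma(a_j\pm\tfrac{i\lambda}{2})$ and $-\log\Gamma(\pm i\lambda)$, and estimate $F$ and its derivatives using Stirling's expansion $\log\Gamma(z)=(z-\tfrac12)\log z-z+\tfrac12\log 2\pi+O(|z|^{-1})$ together with $\psi^{(k)}(z)=O(|z|^{-k})$ for $k\ge 1$, valid uniformly since all the arguments remain at distance $\asymp|\lambda|$ from the poles.

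The leading behaviour is pinned down by the real parts of Stirling's formula: using $|\Gamma(x+iy)|\sim\sqrt{2\pi}\,|y|^{x-1/2}e^{-\pi|y|/2}$ as $|y|\to\infty$, the exponential factors cancel exactly (as they must, $|c|^{-2}$ growing polynomially), and the surviving powers of $|\lambda|$ combine, with $\rho=\tfrac12(m_1+2m_2)$, to the exponent $\bigl(\tfrac{\rho}{2}-\tfrac12\bigr)+\bigl(\tfrac{m_1+2}{4}-\tfrac12\bigr)+\tfrac12=\tfrac{m_1+m_2}{2}$ for $|c(\lambda)|^{-1}$, hence to $m_1+m_2=d-1$ for $|c(\lambda)|^{-2}$. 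Thus $F(\lambda)=(d-1)\log|\lambda|+O(1)$, which gives $|c(\lambda)|^{-2}\asymp|\lambda|^{d-1}$, while differentiating the Stirling and digamma expansions yields $F^{(k)}(\lambda)=O(|\lambda|^{-k})$ for every $k\ge 1$. Finally I would invoke Faà di Bruno's formula, writing $\tfrac{d^\alpha}{d\lambda^\alpha}e^{F}=e^{F}\sum\prod(F^{(k)})^{m_k}$ over partitions with $\sum_k k\,m_k=\alpha$; every such term is $O\bigl(|\lambda|^{d-1}\cdot|\lambda|^{-\alpha}\bigr)$, so that $\bigl|\tfrac{d^\alpha}{d\lambda^\alpha}|c(\lambda)|^{-2}\bigr|\le C_\alpha|\lambda|^{d-1-\alpha}$ for $|\lambda|\ge 1$, which combined with the compact range is exactly \eqref{estimate of |c(-lambda)|^-2}. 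The main obstacle I anticipate is the bookkeeping in the last step: one must verify that the various Stirling leading coefficients assemble to precisely $d-1$ and that the subleading $O(|z|^{-1})$ errors, and their derivatives, are genuinely uniform in $\lambda$ and contribute only lower-order terms, so that no spurious growth enters $F^{(k)}$.
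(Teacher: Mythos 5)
Your proposal is correct in substance, but it is worth noting that the paper does not prove this lemma at all: it is imported verbatim from Stanton--Tomas \cite[Lemma 4.2]{Stanton and Tomas}, so you have supplied a self-contained proof where the paper offers only a citation. Moreover, within the paper's own toolkit there is a much shorter route than yours: since $c(-\lambda)=\overline{c(\lambda)}$ for $\lambda\in\R$, one has $|c(-\lambda)|^{-2}=c(\lambda)^{-1}c(-\lambda)^{-1}$ with $\frac{d^{j}}{d\lambda^{j}}c(\lambda)^{-1}=\overline{\frac{d^{j}}{d\lambda^{j}}c(-\lambda)^{-1}}$ on the real line, so the Leibniz rule combined with the preceding lemma (estimate (\ref{est: cminusla}), cited from Ionescu) gives
\begin{equation*}
\Bigl|\frac{d^{\alpha}}{d\lambda^{\alpha}}|c(-\lambda)|^{-2}\Bigr|\leq \sum_{j=0}^{\alpha}\binom{\alpha}{j}C_{j}C_{\alpha-j}(1+|\lambda|)^{\frac{d-1}{2}-j}(1+|\lambda|)^{\frac{d-1}{2}-(\alpha-j)}\leq C_{\alpha}(1+|\lambda|)^{d-1-\alpha}
\end{equation*}
in two lines. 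Your Stirling/Fa\`a di Bruno argument proves more from scratch (it also yields the lower bound $|c(\lambda)|^{-2}\asymp|\lambda|^{d-1}$, which the Leibniz route does not), at the cost of heavier asymptotic bookkeeping; the Leibniz route buys brevity but rests on the unproved input (\ref{est: cminusla}).

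One caveat on your derivative bounds: the termwise estimates $\psi^{(k)}(z)=O(|z|^{-k})$, $k\geq 1$, do not by themselves give $F^{(k)}(\lambda)=O(|\lambda|^{-k})$. Indeed $F^{(k)}$ is a signed combination of terms $(ib_{j})^{k}\psi^{(k-1)}(a_{j}+ib_{j}\lambda)$, which individually are only $O(|\lambda|^{-(k-1)})$ for $k\geq 2$ and grow like $\log|\lambda|$ for $k=1$; the claimed decay requires the leading terms to cancel across the six Gamma factors. They do cancel, because the relevant coefficient is $\sum_{j}\epsilon_{j}b_{j}=(\tfrac12+\tfrac12-1)+(-\tfrac12-\tfrac12+1)=0$ (and the constant and $\log|b_j|$ contributions to $F'$ cancel similarly), which is the quantitative form of your remark that $F(\lambda)=(d-1)\log|\lambda|+O(1)$ with a differentiable expansion. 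So the gap is only in the stated justification, not in the conclusion: if you phrase the step as differentiating the full Stirling expansion of $F$ (legitimate by Cauchy estimates, since each $\log\Gamma$ is holomorphic in a sector around the relevant ray), the argument closes as you anticipated.
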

		Later we will require the following lemma.
		
		\begin{lemma}\label{lemma:estimate_of_sigma_(lambda)_(c(-lambda)_e(-epsilo^2_lamda))}
			Let $ \sigma $ satisfies \eqref{eqn:hypothesis_of_sigma} and $ N\in \N $, then the following is true  for all $ j\in [0,N]  $,
			\begin{equation}\nonumber
				\lim_{\epsilon \rightarrow 0} \left| \frac{\partial^j}{\partial \lambda^j} \left( \frac{\sigma(x,\lambda)\, e^{-\epsilon \lambda^2}}{c(-\lambda)}\right)\right| \leq \frac{C_j}{(1+|\lambda|)^{j-\frac{(d-1)}{2}}} \text{ for all $ \lambda $ with $ 0 \leq \Im \lambda \leq \rho_p $},
			\end{equation}
		where $ C_j $'s are independent of $ x\in \X. $
		\end{lemma}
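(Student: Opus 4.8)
The plan is to expand the $j$-th $\lambda$-derivative of the product by the general Leibniz rule, treating $\sigma(x,\lambda)e^{-\epsilon\lambda^2}c(-\lambda)^{-1}$ as a product of three factors that are all holomorphic in $\lambda$ on the relevant region (here $\sigma(x,\cdot)$ is holomorphic on $S_p^\circ$ by hypothesis, $c(-\lambda)^{-1}$ is holomorphic on $\Im\lambda\geq 0$ by \eqref{est: cminusla}, and the Gaussian is entire), and then to pass to the limit $\epsilon\to 0$ factor by factor. Concretely, I would write
\[
\frac{\partial^j}{\partial\lambda^j}\left(\frac{\sigma(x,\lambda)\,e^{-\epsilon\lambda^2}}{c(-\lambda)}\right)
=\sum_{a+b+c=j}\frac{j!}{a!\,b!\,c!}\,
\frac{\partial^a}{\partial\lambda^a}\sigma(x,\lambda)\;
\frac{\partial^b}{\partial\lambda^b}e^{-\epsilon\lambda^2}\;
\frac{\partial^c}{\partial\lambda^c}c(-\lambda)^{-1},
\]
which is a finite sum, so the limit may be taken termwise.

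Next I would dispose of the Gaussian factor. Using the Hermite identity $\frac{d^b}{d\lambda^b}e^{-\epsilon\lambda^2}=\epsilon^{b/2}(-1)^b H_b(\sqrt{\epsilon}\,\lambda)\,e^{-\epsilon\lambda^2}$, where $H_b$ is the Hermite polynomial, one sees that for each \emph{fixed} $\lambda$ in the strip $0\leq\Im\lambda\leq\rho_p$ the terms with $b\geq 1$ carry a prefactor $\epsilon^{b/2}\to 0$, while $H_b(\sqrt{\epsilon}\,\lambda)e^{-\epsilon\lambda^2}$ stays bounded (indeed converges to $H_b(0)$); hence all $b\geq 1$ terms vanish as $\epsilon\to 0$, and the $b=0$ term has $e^{-\epsilon\lambda^2}\to 1$. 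Consequently
\[
\lim_{\epsilon\to 0}\frac{\partial^j}{\partial\lambda^j}\left(\frac{\sigma(x,\lambda)\,e^{-\epsilon\lambda^2}}{c(-\lambda)}\right)
=\sum_{a+c=j}\binom{j}{a}\,
\frac{\partial^a}{\partial\lambda^a}\sigma(x,\lambda)\;
\frac{\partial^c}{\partial\lambda^c}c(-\lambda)^{-1}.
\]

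Finally I would estimate the surviving sum. The strip $\{0\leq\Im\lambda\leq\rho_p\}$ lies inside $S_p$, so \eqref{eqn:hypothesis_of_sigma} with $s=0$ and $\beta=0$ gives $\left|\partial_\lambda^a\sigma(x,\lambda)\right|\leq C_a(1+|\lambda|)^{-a}$; and since $\rho_p\leq\rho$, the same strip lies inside $\{0\leq\Im\lambda\leq\rho\}$, so \eqref{est: cminusla} gives $\left|\partial_\lambda^c c(-\lambda)^{-1}\right|\leq C_c(1+|\lambda|)^{\frac{d-1}{2}-c}$. Multiplying and using $a+c=j$ yields $(1+|\lambda|)^{-a}(1+|\lambda|)^{\frac{d-1}{2}-c}=(1+|\lambda|)^{\frac{d-1}{2}-j}$ for every term, and summing the finitely many terms gives the asserted bound $C_j(1+|\lambda|)^{\frac{d-1}{2}-j}$. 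The constants furnished by \eqref{eqn:hypothesis_of_sigma} are uniform in $x\in G$ and those in \eqref{est: cminusla} do not involve $x$ at all, so $C_j$ is independent of $x\in\X$.

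The argument is essentially routine; the only genuine point of care is the vanishing of the Gaussian-derivative terms, which works precisely because the limit is taken pointwise in $\lambda$ (so the explicit $\epsilon^{b/2}$ prefactor can be exploited) rather than uniformly over the strip. I would also flag the implicit constraint $N\leq\left[\frac{\dim\X+1}{2}\right]+1$, needed so that \eqref{eqn:hypothesis_of_sigma} indeed supplies the $\sigma$-derivative bounds up to order $j$.
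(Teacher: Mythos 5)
Your proposal is correct and follows essentially the same route as the paper: a Leibniz expansion, the observation that each $\lambda$-derivative falling on the Gaussian contributes a factor $\epsilon^{1/2}$ (so those terms vanish in the pointwise limit $\epsilon\to 0$), and then the termwise estimate combining \eqref{eqn:hypothesis_of_sigma} with \eqref{est: cminusla}. The paper groups $\sigma(x,\lambda)e^{-\epsilon\lambda^2}$ as a single factor and uses the elementary bound $|\lambda^n e^{-\epsilon\lambda^2}|\leq C_n\epsilon^{-n/2}$ in place of your Hermite identity, but this is only a cosmetic difference; your closing remark about the implicit constraint $N\leq\left[\frac{\dim\X+1}{2}\right]+1$ is a fair observation that the paper leaves tacit.
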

		\begin{proof}
			Let $ 0<\epsilon\leq 1$. By a  simple calculation we have,  
			\begin{equation}\label{eqn:est_of_lambda^ne^-elambda^2}
				|\lambda^n e^{-\epsilon \lambda^2}| \leq \left( \left(\frac{n}{2}\right)^{\frac{n}{2}}e^{-\frac{n}{2}} \right)\epsilon^{-\frac{n}{2}} \text{ for all }  \lambda \in \R, n\in \N.
			\end{equation}
			Then for $ j\in [0,N] $ and $ x\in \X $,   
			\begin{align*}
				\left| \frac{\partial^j}{\partial \lambda^j} \left( \frac{\sigma(x,\lambda)\, e^{-\epsilon \lambda^2}}{c(-\lambda)}\right)\right|\leq C \sum_{l=0}^{j}     \left|  \frac{\partial^l }{\partial \lambda^l} \left( {\sigma(x,\lambda)\, e^{-\epsilon \lambda^2}}\right) \right| \left|  \frac{\partial^{j-l} }{\partial \lambda^{j-l}}  c(-\lambda)^{-1} \right|.
			\end{align*}
			Now we apply  \eqref{eqn:hypothesis_of_sigma}, (\ref{est: cminusla}) and \eqref{eqn:est_of_lambda^ne^-elambda^2} repeatedly to get,
			\begin{align*}
				\sum_{l=0}^{j}     \left|  \frac{\partial^l }{\partial \lambda^l} \left( {\sigma(x,\lambda)\, e^{-\epsilon \lambda^2}}\right) \right| \left|  \frac{\partial^{j-l} }{\partial \lambda^{j-l}} c(-\lambda)^{-1} \right| \leq \frac{1}{(1+\lambda)^{j-\frac{d-1}{2}}} \left(  1+\epsilon^{\frac{1}{2}} (1+|\lambda|)  + \cdots +\epsilon^{\frac{j}{2}} (1+|\lambda|)^j \right).
			\end{align*}
			By sending $ \epsilon \rightarrow 0 $ both side, we get our lemma.
		\end{proof}


\section{\textbf{Singular integral realization of $ \Psi $DO}}\label{decomposition of PSi}
 We now study the integral representations of the pseudo differnetail operator $\Psi_\sigma $.  By substituting the definition of  Helgason Fourier transform (\ref{defn:hft}) in \eqref{eqn:defn_of_T_sigmaf_sym_space}, it transforms to
		\begin{align*}\label{eqn:alternate_defn_of_T_sigmaf_sym_space}
			\Psi_\sigma f(x)& =  \int_{\R } \int_{K} \sigma(x,\lambda) \tilde{f}(\lambda,k)   e^{-(i\lambda+\rho)H(x^{-1}k)} |c(\lambda)|^{-2} d\lambda \, dk\\
			&=  \int_{G}  \int_{\R } \int_{K}  f(y)\, \sigma(x,\lambda) \, e^{-(i\lambda+\rho)H(x^{-1}k)}e^{(i\lambda -\rho) H(y^{-1}k)}  |c(\lambda)|^{-2}  dk\, d\lambda\,   dy.
		\end{align*}
			Then using a formula of the spherical function (\ref{eqn:identity-phi}),  we have
			\begin{equation}
				\Psi_\sigma f(x) =  \int_{G} f(y) \mathcal{K}(x,y) \,dy,
			\end{equation}
where
\begin{equation}\label{eqn:defn_of_K(x,y)}
				 \mathcal{K}(x,y) = \int_{\R} \sigma (x,\lambda) \phi_\lambda(y^{-1}x ) |c(\lambda)|^{-2} d\lambda.
			\end{equation}		

 \textbf{Decomposition of the $\Psi$DO : } Due to distinct behavior of spherical function near the origin and away from it, we now split $ \Psi_{\sigma} $ into two parts. The idea of such decomposition employed here was first introduced by Clerc and Stein \cite{Clerc Stein}, where they proved multiplier theorem for complex $ G $. Suppose $ \eta: \R \rightarrow [0,1] $ be a smooth even function supported on $ [1,\infty) $ such that $ \eta(t) =1 $ if $ |t| \geq 2 $. Let $  \eta^\circ(t) =1-\eta(t)$. Using the Cartan decomposition, we extend the functions $ \eta $ and $ \eta^\circ $ to  $ K $-biinvariant functions on $ G $, by
			\begin{equation}
				\eta(x)= \eta(x^+) \quad \text{ for all } x\in G. 
			\end{equation} 
		Now we decompose the  operator $\Psi_\sigma   $ as a sum of local and global parts as:
			\begin{equation}\label{eqn:decomp_of_the_main_kernel}
				 \Psi_\sigma f(x) = \Psi_\sigma^{loc}f(x)+ \Psi_\sigma^{glo}f(x) ,
			 \end{equation}
		 where 
		 \begin{equation}\label{eqn:defn_of_T_sigma_local}
		\Psi_\sigma^{loc}f(x)=	\int_{G}  f(y) \eta^\circ(y^{-1} x) \mathcal{K}(x,y) dy,
		 \end {equation} 
		 and 
		 \be\label{eqn:defn_of_T_sigma_global}
		 \Psi_\sigma^{glo}f(x)  =\int_{G}  f(y)  \eta(y^{-1} x) \mathcal{K}(x,y) dy.
		 \ee
In the upcoming two sections, we   separately  deal with the operators $ \Psi^{loc}_\sigma $ and $ \Psi_\sigma^{glo} $.
\section{\textbf{Analysis on the local part of $ \Psi_{\sigma} $}}\label{Analysis on the local part}
In the multiplier case, one can write the local part of the multiplier operator as convolution with a compactly supported $ K $-biinvariant function $ \mathcal{K}_m $ (say). Next they relate the convolution with $ \mathcal{K}_m $ to Euclidean multiplier and then the boundedness of multiplier on $ G/K $ follows from Marcinkiewicz multiplier theorem. Here, we point out a fundamental difference between the multiplier case and our situation.  In the case of multiplier operator, writing it as a convolution operator plays a crucial role. However in pseudo differential operator, one cannot use the theory of multiplier, due to an extra variable $ x $ in the symbol $ \sigma(x,\lambda) $.  To prove the boundedness of $ \Psi_{\sigma}^{loc} $, we used a generalized transference principle by Coifmann-Weiss. This principle helped us to establish a connection between the $ L^p $ boundedness of $  \Psi_{\sigma}^{loc}$ on $ G/K $ with Euclidean pseudo differential operator. We have the following result for the local part of $\Psi_\sigma$.

	\begin{theorem}
	\label{thm:Lp_boundedness_of_T_sigma_local}
		Suppose $ p\in  (1,\infty) $ and $\sigma$ satisfies the properties of Theorem \ref{thm:pdo_for_sym_space}.  Then there is a constant $ C> 0 $, such that  \begin{equation}\label{eqn Lp estimate of T sigma f }
			\| \Psi^{loc}_\sigma f\|_{L^p(\X)} \leq C\|f\|_{L^{p}(\X)}, \quad \text{ for all } f\in L^p(\X).
		\end{equation}
	\end{theorem}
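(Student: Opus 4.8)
The plan is to model $\Psi_\sigma^{loc}$ near the diagonal by a Euclidean pseudo differential operator and then to transfer the uniform boundedness of Theorem \ref{thm:pdo_for_family_of_symbols} to $\X$ through the generalized Coifman--Weiss principle of the Appendix. First I would note that the cut-off $\eta^\circ$ forces the kernel $\eta^\circ(y^{-1}x)\mathcal K(x,y)$ in \eqref{eqn:defn_of_T_sigma_local} to be supported where $(y^{-1}x)^+\le 2$, so only the values $\phi_\lambda(a_t)$ with $0\le t\le 2$ enter \eqref{eqn:defn_of_K(x,y)} and the Stanton--Tomas expansion of Theorem \ref{thm:localexpansion-phi} is valid throughout the support. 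Since we work with $\lambda\in\R$ in the local analysis, only the real-variable $\mathcal S^0$ bounds in \eqref{eqn:hypothesis_of_sigma} are used; the holomorphy of $\sigma$ in $S_p^\circ$ is reserved for the global part, which is why the local estimate holds for the full range $1<p<\infty$.

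Next I would substitute
$$\phi_\lambda(a_t)=c_0\left[\frac{t^{d-1}}{\Delta(t)}\right]^{1/2}\sum_{m=0}^{M} t^{2m}a_m(t)\,\mathcal{J}_{(d-2)/2+m}(\lambda t)+E_{M+1}(\lambda t)$$
into \eqref{eqn:defn_of_K(x,y)}. The leading term $\mathcal{J}_{(d-2)/2}(\lambda t)$ is, up to normalisation, the kernel of the radial Fourier transform on $\R^d$, while $|c(\lambda)|^{-2}$ matches the Plancherel density $|\lambda|^{d-1}$ by Lemma \ref{lemma:est_of_|c(-lambda)|^-2}. Consequently, after passing to geodesic normal coordinates, the principal part of $\Psi_\sigma^{loc}$ is a Euclidean radial pseudo differential operator whose symbol is assembled from $\sigma(x,\cdot)$, and the decay $(1+|\lambda|)^{-\alpha}$ under $\partial_\lambda^\alpha$ up to order $[\frac{d+1}{2}]+1$ supplies exactly the $\xi$-regularity demanded by \eqref{eqn:condn_on_symbol_family}. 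The terms with $m\ge1$ and the error $E_{M+1}$ carry extra powers of $t$; for $M$ large their contribution to $\mathcal K$ is a bounded, compactly supported kernel, so the corresponding operators are bounded on $L^p(\X)$ by Schur's test and may be discarded.

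For the principal term I would invoke the transference theorem of the Appendix. Integrating the $K$-variable out (it runs over the compact group $K$ and costs no derivatives) reduces matters to the one-parameter flow $s\mapsto xa_s$, along which the amenable group $A\cong\R$ acts; the two $s$-derivatives in \eqref{eqn:hypothesis_of_sigma} furnish precisely the base-point regularity that this transference consumes. The uniform $\mathcal S^0$ family estimate of Theorem \ref{thm:pdo_for_family_of_symbols} yields a single constant controlling all the frozen Euclidean operators independently of the parameter, and transferring this bound through the isometric action attached to the local model converts it into $\|\Psi_\sigma^{loc}f\|_{L^p(\X)}\le C\|f\|_{L^p(\X)}$ for every $1<p<\infty$.

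The principal obstacle is to organize the transference so that the single-direction, order-two regularity in the base point is genuinely sufficient: one must keep the discrepancy between $\phi_\lambda(a_t)$ and its Bessel model controlled uniformly in $x\in\X$, and one must route the transference through the amenable structure attached to the local part (the one-parameter group $A$ together with the compact $K$) rather than through $G$, which is not amenable. Reconciling the $d$-dimensional $\lambda$-count with the one-dimensional, order-two $s$-count --- that is, verifying \eqref{eqn:condn_on_symbol_family} for the transferred family and showing the $m\ge1$ and error operators are negligible --- is where the regularity budget of \eqref{eqn:hypothesis_of_sigma} is actually spent.
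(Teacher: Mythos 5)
Your proposal follows essentially the same route as the paper: the Stanton--Tomas expansion isolates the leading Bessel term (with the $m\ge 1$ and error terms absorbed into a kernel dominated uniformly in the base point by an $L^1$ function), and the principal part is transferred via the Coifman--Weiss principle along the amenable one-parameter group $A\cong\R$ after integrating out $K$ in the Cartan decomposition, reducing to the uniform family bound of Theorem \ref{thm:pdo_for_family_of_symbols}. The only cosmetic difference is that the paper's Euclidean model is the one-dimensional operator with symbols $a_z(s,y)=\mathcal F(\mathcal K_0(za_s,\cdot))(y)$ verified via the $\mathcal J_{(d-2)/2}$ integration-by-parts computation, rather than a $d$-dimensional radial operator in normal coordinates, but the substance of the argument is the same.
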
  

Proof of the theorem above will be a consequence of the following lemma. In fact next lemma allow us to apply Coifman-Weiss transference principle (see \eqref{eqn:req_est_for_pi}) in our setting. We will postpone the proof of following lemma   in the next subsection. Assuming this, we complete the proof of Theorem  \ref{thm:Lp_boundedness_of_T_sigma_local}.
\begin{lemma}\label{lemma:Coifman-Weiss_condition_our_case}
	 For $ z\in G $, let   $\mathcal R_s z:=za_s$, $ s\in \R $. Then we have the following inequality
	\begin{equation}\label{eqn:Coifman-Weiss_condn_satisified_by_K}
		\left(	\int_{\R} \left| \int_{\R} \mathcal{K}(\mathcal{R}_s z, \mathcal{R}_{-t} \mathcal{R}_s z ) \eta^\circ(a_t) |\Delta(t)|  g(s-t) dt\right|^{p} ds\right) ^{\frac{1}{p}}\leq C \left( \int_{\R}|g(t)|^p dt\right)^{\frac{1}{p}},
	\end{equation}
 for some $ C>0 $ is independent of $ z \in G$, and $ g\in L^p(\R) $.
\end{lemma}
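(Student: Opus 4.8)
The plan is to reduce \eqref{eqn:Coifman-Weiss_condn_satisified_by_K} to the boundedness of a \emph{family} of Euclidean pseudo differential operators and then invoke Theorem \ref{thm:pdo_for_family_of_symbols}. First I would simplify the kernel. With $x=\mathcal R_s z=za_s$ and $y=\mathcal R_{-t}\mathcal R_s z=za_{s-t}$, the element $y^{-1}x=a_{t-s}a_s=a_t$ lies in $A$, so by \eqref{eqn:defn_of_K(x,y)},
\bes
\mathcal K(\mathcal R_s z,\mathcal R_{-t}\mathcal R_s z)=\int_\R \sigma(za_s,\lambda)\,\phi_\lambda(a_t)\,|c(\lambda)|^{-2}\,d\lambda .
\ees
Writing $\eta^\circ(a_t)=\eta^\circ(t)$ and $\kappa_z(s,t):=\mathcal K(za_s,za_{s-t})\,\eta^\circ(t)\,|\Delta(t)|$, the inner integral in \eqref{eqn:Coifman-Weiss_condn_satisified_by_K} becomes the variable-coefficient convolution $T_z g(s)=\int_\R \kappa_z(s,t)\,g(s-t)\,dt$, and the asserted inequality is precisely $\|T_z g\|_{L^p(\R)}\le C\|g\|_{L^p(\R)}$ with $C$ independent of $z$.

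Next I would exhibit $T_z$ as a Euclidean $\Psi$DO on $\R$. Substituting the Fourier inversion $g(s-t)=\int_\R \mathcal F g(\xi)\,e^{2\pi i(s-t)\xi}\,d\xi$ and interchanging the order of integration gives $T_z=a_z(s,D)$ with symbol
\bes
a_z(s,\xi)=\int_\R \kappa_z(s,t)\,e^{-2\pi i t\xi}\,dt=\int_\R \sigma(za_s,\lambda)\,|c(\lambda)|^{-2}\,\Theta(\lambda,\xi)\,d\lambda,\qquad \Theta(\lambda,\xi):=\int_\R \phi_\lambda(a_t)\,\eta^\circ(t)\,|\Delta(t)|\,e^{-2\pi i t\xi}\,dt .
\ees
It then suffices to prove that $\{a_z:z\in G\}$ is a family lying in $\mathcal S^0(\R)$ with constants uniform in $z$, i.e. $|\partial_s^\beta\partial_\xi^\alpha a_z(s,\xi)|\le C_{\alpha,\beta}(1+|\xi|)^{-\alpha}$ with $C_{\alpha,\beta}$ independent of $z$; Theorem \ref{thm:pdo_for_family_of_symbols} (with $\Lambda=G$ and $n=1$) then yields the desired uniform $L^p$ bound. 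Since $\partial_s^\beta$ falls only on $\sigma(za_s,\lambda)$ and is controlled for $\beta=0,1,2$ by hypothesis \eqref{eqn:hypothesis_of_sigma}, the task reduces to the $\xi$-estimates $|\partial_\xi^\alpha a_z(s,\xi)|\le C_\alpha(1+|\xi|)^{-\alpha}$.

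For these I would feed the Stanton--Tomas local expansion (Theorem \ref{thm:localexpansion-phi}) into $\Theta(\lambda,\xi)$. Because $\eta^\circ$ is supported in $\{|t|\le 2\}$ (choosing $R_0\ge 2$), the $t$-integration sees only the small-$t$ regime, where $\phi_\lambda(a_t)=c_0\,[t^{d-1}/\Delta(t)]^{1/2}\sum_{m} t^{2m}a_m(t)\,\mathcal J_{(d-2)/2+m}(\lambda t)+E_{M+1}(\lambda t)$; the prefactor $[t^{d-1}/\Delta(t)]^{1/2}$ exactly offsets the singular factor $|\Delta(t)|$, so each amplitude is an even $C^\infty$ function and the $t=0$ behaviour $\kappa_z(s,t)\sim|t|^{d-1}$ only improves the $\xi$-decay. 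The Bessel profiles $\mathcal J_{(d-2)/2+m}(\lambda t)$ encode an oscillation of frequency $\approx\lambda$ in $t$, so that $\Theta(\lambda,\xi)$ is essentially localized to $\xi\approx\pm\lambda/2\pi$; combined with $|c(\lambda)|^{-2}\sim(1+|\lambda|)^{d-1}$ (Lemma \ref{lemma:est_of_|c(-lambda)|^-2}) and the bounds \eqref{eqn:hypothesis_of_sigma} on $\partial_\lambda^\alpha\sigma$ available up to order $[\tfrac{d+1}{2}]+1$, repeated integration by parts in $\lambda$ produces the decay $(1+|\xi|)^{-\alpha}$. Conceptually the expansion realizes the local operator as a radial $d$-dimensional Euclidean $\Psi$DO, with $|c(\lambda)|^{-2}\,d\lambda$ playing the role of the $d$-dimensional Plancherel density and $\mathcal J_{(d-2)/2}(\lambda t)$ that of the spherical average of a plane wave, which is why a H\"ormander--Mihlin count of $\approx d/2$ derivatives of $\sigma$ appears. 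The remainder $E_{M+1}$ is absorbed by taking $M$ large.

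I expect the main obstacle to be the $\lambda$-integral defining $a_z$: it is not absolutely convergent, since $|c(\lambda)|^{-2}$ grows like $(1+|\lambda|)^{d-1}$ while $\sigma$ is merely bounded, and every $t$-derivative used to extract $\xi$-decay brings down a further power of $\lambda$. Thus the required decay in $\xi$ is never pointwise in $\lambda$; it survives only after exploiting the oscillation of $\phi_\lambda(a_t)\,e^{-2\pi i t\xi}$ and integrating by parts in $\lambda$, which is exactly the purpose of the hypothesis that $\partial_\lambda^\alpha\sigma$ be controlled for $\alpha$ up to $[\tfrac{d+1}{2}]+1$. Additional care is needed at the transition $|\lambda t|\sim 1$ between the two regimes of the remainder estimate in Theorem \ref{thm:localexpansion-phi}, and in verifying that all constants remain uniform in $z\in G$, which holds because every bound on $\sigma(za_s,\lambda)$ and its derivatives in \eqref{eqn:hypothesis_of_sigma} is independent of the point $x=z$.
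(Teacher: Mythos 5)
Your proposal is correct and follows essentially the same route as the paper: reduce the inequality to the uniform $L^p$-boundedness of the family of Euclidean pseudo differential operators with symbols $a_z(s,\xi)=\int_\R \kappa_z(s,t)e^{-2\pi it\xi}\,dt$ (Theorem \ref{thm:pdo_for_family_of_symbols}), and verify the uniform $\mathcal S^0$ estimates by feeding the Stanton--Tomas expansion of $\phi_\lambda(a_t)$ into the kernel. The only part you leave at sketch level is the actual verification of $|\partial_s^\beta\partial_\xi^\alpha a_z(s,\xi)|\le C_{\alpha,\beta}(1+|\xi|)^{-\alpha}$, which the paper carries out by first splitting off (via a low-frequency cutoff $\varPhi$ and Proposition \ref{prop:expression_of_p(z,lambda)}) an error kernel $\zeta$ dominated uniformly in $z$ by a fixed $L^1$ function, and then, for the leading $\mathcal J_{(d-2)/2}$ term, using the subordination formula for $\mathcal J_0$ (for $d$ even; cosine for $d$ odd) to exhibit $\mathcal F(\mathcal K_0(za_s,\cdot))$ as the convolution of a Schwartz function with a function $h(za_s,\cdot)$ obeying symbol estimates --- this is the concrete realization of the ``integration by parts in $\lambda$'' you describe.
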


\begin{proof}[\textbf{Proof of Theorem \ref{thm:Lp_boundedness_of_T_sigma_local} assuming Lemma \ref{lemma:Coifman-Weiss_condition_our_case}:}] We will proceed with integral formula corresponding to Cartan decomposition  followed by Minkowski’s integral inequality to obtain
	 
	\begin{align*} 
		\left( \int_{\X} |\Psi_\sigma^{loc} f(x) | ^p dx\right)^{\frac{1}{p}} &= \left( \int_{G} \left|  \int_{G} f(y) \eta^\circ(y^{-1}x) \mathcal{K}(x,y) dy  \right|^{p}  dx \right)^{\frac{1}{p}}\\
		& = \left( \int_{G} \left|  \int_{G} f(xy) \eta^\circ(y^{-1}) \mathcal{K}(x,xy) dy  \right|^{p}dx  \right)^{\frac{1}{p}}\\
		& = \left( \int_{G} \left| \int_{K} \int_{\R} f(xka_t) \eta^\circ\left( (ka_t)^{-1} \right) \mathcal{K}(x, xk a_t) |\Delta(t)| dk dt    \right|^p dx \right)^{\frac{1}{p}}\\
		&\leq \int_{K} \left( \int_{G} \left|  \int_{\R} f(xka_t) \eta^\circ\left(a_t\right) \mathcal{K}(x, xk a_t) |\Delta(t)|  dt    \right|^p dx\right)^{\frac{1}{p}}dk\\
		& = \left( \int_{G}  \left|  \int_{\R} f(za_t) \eta^\circ\left(a_t\right) \mathcal{K}(z, z a_t) |\Delta(t)|  dt    \right|^p dz\right)^{\frac{1}{p}}\\
		& =  \left( \int_{G}  \left|  \int_{\R} \mathcal{R}_{t} f(z) \,   \eta^\circ( a_t) \mathcal{K}(z, z a_t) |\Delta(t)|  dt    \right|^p dz\right)^{\frac{1}{p}},
	\end{align*}		
	where $\mathcal{R} $  is the representation of $ \R  $, acting on the functions of $ G $, by 
	\bes \mathcal{R}_t f(x) =f(xa_t ).
	\ees 	
Now we apply the generalized Coifman-Weiss  transference principle (see Appendix \eqref{eqn:reprn_of_T_using_K}) and Lemma \ref{lemma:Coifman-Weiss_condition_our_case}, to conclude the theorem. 
\end{proof}
	

	\subsection{\text{Proof of Lemma \ref{lemma:Coifman-Weiss_condition_our_case}}}
	Before proceeding further, we remark that  we assume that $ \sigma(\cdot ,\lambda ) $ has rapid decay as $ \lambda \rightarrow \infty $.  To be precise, we  assume $ \sigma(\cdot, \lambda) $ is multiplied with a factor of the form $ e^{-\epsilon \lambda^2} ,$ where $ 0<\epsilon \leq 1 $.  Our estimates are uniform in $ \epsilon $. Once we prove suitable uniform estimates, standard limiting arguments allow us to pass to the general case. 
	Let $\mathcal{K}_1:G\times \R^+\rightarrow \C$ be a function defined by \begin{align*}
		\mathcal{K}_1(z,a_t) &=  \eta^\circ(a_t)\Delta(t) \mathcal{K}(z, z  a_{-t})\\
		& = \eta^\circ(a_t) \Delta(t)  \int_\R  \sigma(z,\lambda ) \phi_{\lambda}(a_t) |c(\lambda)|^{-2} d\lambda.
	\end{align*}

	We will prove Lemma \ref{lemma:Coifman-Weiss_condition_our_case} in the following steps:\\
	
	\textbf{Step I : Separation of the Kernel $ \mathcal{K}_1 $}. 
	
	 Suppose $ \varPhi $ is a smooth even function on $ \R $ with $ 0\leq \varPhi\leq 1 $; $ \varPhi=1 $ when $ |\lambda|>2$; $ \varPhi(\lambda)=0 $ when $ |\lambda|<1 $. Then we claim 	
	\begin{equation}\label{eqn:separation_of_kernel}
			\mathcal{K}_1(z a_s, a_t)    =   \mathcal{K}_0(za_s,t) + \zeta(za_s,t),
	\end{equation} 
 \text{ satisfying the followings: }\begin{enumerate}
 	\item[(i)]There is some constant $ C>0 $, such that
 	 \begin{equation}\label{eqn:condn_zeta_local_part}
 		 |\zeta(z, t)| \leq C\, \zeta_0(t) \in L^1(\R),\text{ for all   $ z \in G$} ,
 	\end{equation} \text{and}\\
 	\item [(ii)] $\begin{aligned}[t]
 		 \mathcal{K}_0(za_s,t)  =  C \eta^\circ(a_t )   \left( \frac{t^{d-1}}{\Delta(t)}\right)^{1/2} \int_{0}^\infty \Phi(\lambda)  \sigma(z a_s,\lambda ) \mathcal{J}_{(d-2)/2} (\lambda t) |c(\lambda)|^{-2} d\lambda,
 	\end{aligned}$
 \end{enumerate}

The proof of the above claim will be based on the local expansion of $ \phi_{\lambda} $ from Theorem  \ref{thm:localexpansion-phi} and idea of \cite{Stanton and Tomas}. First  we extend the result \cite[Proposition 4.1]{Stanton and Tomas} to a more general setting.
\begin{proposition}\label{prop:expression_of_p(z,lambda)}
	Let $ p  : G\times \R^+ \rightarrow \C $ be  a smooth  function satisfying the estimates \begin{equation}
		\begin{aligned}
			\partial_\lambda^\alpha p(z, 0) =0  \quad \text{ when $  0\leq \alpha \leq \left[\frac{d+1}{2}\right]+1  $},\\
			\left| \partial_\lambda^\alpha p(z, \lambda) \right| \leq C_\alpha (1+|\lambda |)^{-\alpha} \quad 0\leq \alpha \leq  \left[\frac{d+1}{2}\right]+1,
		\end{aligned}
	\end{equation}
	for all $ z \in G $ and the constants $ C_\alpha $'s  are independent of $ z\in G. $ Then there exists  functions $ e_z, e_0 \in L^1({G//K})$ such that 
	\begin{equation}\label{eqn:inversion_of_p(z,lambda)}
		\begin{aligned}
			\check p(z, a_t) \eta^\circ(a_t)    &:= \eta^\circ (a_t)  \int_{0}^{\infty} p(z,\lambda) \phi_{\lambda}(a_t) |c(\lambda)|^{-2} d\lambda \\
			&  =  C \eta^\circ (a_t)  \left(\frac{t^{d-1}}{\Delta(t)}\right)^{\frac{1}{2}} \int_{0}^\infty p(z,\lambda) \mathcal{J}_{(d-2)/2}(\lambda t )   |c(\lambda)|^{-2} d\lambda +e_z(t),
		\end{aligned}
	\end{equation}
	and \begin{equation}\label{eqn:est_of_e_z(t)}
		   | e_z(t) | \leq C \, e_0(t) \text{ for all } z\in G .
	\end{equation}
\end{proposition}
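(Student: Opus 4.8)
The plan is to establish \eqref{eqn:inversion_of_p(z,lambda)} by feeding the local expansion of the spherical function (Theorem \ref{thm:localexpansion-phi}) into the definition of $\check p(z,a_t)$ and then showing that every term other than the leading one is dominated, uniformly in $z$, by a single fixed function in $L^1(G//K)$. First I would observe that $\eta^\circ(a_t)$ is supported in $\{0\le t\le 2\}$, so after enlarging the cut-off parameter if necessary (so that $R_0\ge 2$) the expansion of Theorem \ref{thm:localexpansion-phi} is valid throughout the support. Fixing an integer $M>(d-1)/2$ and writing $\mu_m=(d-2)/2+m$, substitution gives
\bes
\eta^\circ(a_t)\check p(z,a_t)=\eta^\circ(a_t)\left[c_0\left(\frac{t^{d-1}}{\Delta(t)}\right)^{1/2}\sum_{m=0}^{M}t^{2m}a_m(t)\,I_m(z,t)+R_M(z,t)\right],
\ees
where $I_m(z,t)=\int_0^\infty p(z,\lambda)\,\mathcal J_{\mu_m}(\lambda t)\,|c(\lambda)|^{-2}\,d\lambda$ (understood as an oscillatory integral, convergent after integration by parts) and $R_M(z,t)=\int_0^\infty p(z,\lambda)\,E_{M+1}(\lambda t)\,|c(\lambda)|^{-2}\,d\lambda$. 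Since $a_0(t)=1$, the $m=0$ summand is precisely the main term on the right-hand side of \eqref{eqn:inversion_of_p(z,lambda)} with $C=c_0$; it therefore remains to absorb $\eta^\circ(a_t)$ times the sum over $1\le m\le M$ and the remainder $\eta^\circ(a_t)R_M(z,t)$ into $e_z(t)$.

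The remainder $R_M$ is disposed of by the explicit bounds on $E_{M+1}$. Splitting the $\lambda$-integral at $\lambda=1/t$ and using $|p(z,\lambda)|\le C_0$, the estimate $|c(\lambda)|^{-2}\le C(1+\lambda)^{d-1}$ from Lemma \ref{lemma:est_of_|c(-lambda)|^-2}, and the two regimes $|E_{M+1}(\lambda t)|\le C_M t^{2(M+1)}$ for $\lambda t\le 1$ and $|E_{M+1}(\lambda t)|\le C_M t^{2(M+1)}(\lambda t)^{-((d-1)/2+M+1)}$ for $\lambda t>1$, a direct computation yields $|R_M(z,t)|\le C\,t^{2(M+1)-d}$, with $C$ independent of $z$ (the choice $M>(d-1)/2$ guarantees convergence of the tail integral). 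Against the density $\Delta(t)\sim t^{d-1}$ near the origin this gives $t^{2M+1}$, which is integrable on $[0,2]$; hence $\eta^\circ R_M$ is dominated by an $L^1(G//K)$ function independent of $z$.

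The genuinely delicate terms are the intermediate integrals $I_m$ with $1\le m\le M$: for small $m$ the integrand $p(z,\lambda)\mathcal J_{\mu_m}(\lambda t)|c(\lambda)|^{-2}$ decays only like $\lambda^{(d-1)/2-m}$ at infinity and is not absolutely convergent. Here I would integrate by parts repeatedly in $\lambda$, writing $q(z,\lambda):=p(z,\lambda)|c(\lambda)|^{-2}$ and integrating against iterated primitives of $\lambda\mapsto\mathcal J_{\mu_m}(\lambda t)$ chosen to vanish at $+\infty$. The boundary contributions at $\lambda=0$ vanish because $\partial_\lambda^\alpha p(z,0)=0$ for $\alpha\le\left[\frac{d+1}{2}\right]+1$ (and $|c(\lambda)|^{-2}$ is smooth and vanishes at the origin), while those at $\lambda=+\infty$ vanish once enough steps have been taken; the decay hypothesis $|\partial_\lambda^\alpha p(z,\lambda)|\le C_\alpha(1+\lambda)^{-\alpha}$ combined with the derivative estimates for $|c(\lambda)|^{-2}$ from Lemma \ref{lemma:est_of_|c(-lambda)|^-2} renders the resulting integral absolutely convergent. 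Because exactly $\left[\frac{d+1}{2}\right]+1$ derivatives of $p$ are available, the number of integrations by parts is calibrated to this regularity, and one obtains $|I_m(z,t)|\le C\,t^{-\beta_m}$ with $\beta_m<2m+d$, uniformly in $z$. Multiplying by $t^{2m}(t^{d-1}/\Delta(t))^{1/2}$ and then by $\Delta(t)$ produces a function integrable near $0$, so these terms are likewise majorized by a fixed $L^1(G//K)$ function.

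Collecting the three contributions defines $e_z(t)$ together with a majorant $e_0(t)$, and since all the constants above depend only on the $z$-independent bounds $C_\alpha$ in the hypotheses, one gets $|e_z(t)|\le C\,e_0(t)$ with $e_0\in L^1(G//K)$, which is the assertion. I expect the main obstacle to be the integration-by-parts analysis of the intermediate Bessel integrals $I_m$: one must select antiderivatives of $\mathcal J_{\mu_m}(\lambda t)$ for which both endpoint terms are genuinely controlled, and must track the emerging powers of $t$ with enough precision to secure integrability against the density $\Delta(t)$, all while keeping every estimate free of the variable $z$.
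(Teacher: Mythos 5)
Your proposal is correct and follows essentially the same route as the paper: the paper's own proof simply substitutes the Stanton--Tomas local expansion of $\phi_\lambda$, declares the $m=0$ term to be the main term, and defers the $z$-uniform $L^1$ control of the remaining terms to the technique of Clerc--Stein/Stanton--Tomas Proposition 4.1. Your write-up just carries out those deferred estimates explicitly (direct bounds on the $E_{M+1}$ remainder, integration by parts for the non-absolutely-convergent intermediate Bessel integrals, with all constants depending only on the $z$-independent bounds $C_\alpha$), which is exactly the intended argument.
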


\begin{proof}[Proof of Proposition \ref{prop:expression_of_p(z,lambda)} ]
	 Applying Theorem \ref{thm:localexpansion-phi}, with $ M $  chosen to be $ N>(d+1)/2 $ and defining
	 \begin{equation}
	 	\begin{aligned}
	 		 e_z(t) =& \, C \eta^\circ (a_t)  \left(\frac{t^{d-1}}{\Delta(t)}\right)^{\frac{1}{2}} \sum_{m=0}^{N}  t^{2m} a_m(t)\int_{0}^\infty p(z,\lambda) \mathcal{J}_{(d-2)/2}(\lambda t )   |c(\lambda)|^{-2} d\lambda\\
	 		 &+  \eta^\circ (a_t)   \int_{0}^\infty p(z,\lambda)  E_{N+1}(\lambda t)   |c(\lambda)|^{-2} d\lambda,
	 	\end{aligned}
	 \end{equation}
 we get \eqref{eqn:inversion_of_p(z,lambda)}. Now one can complete the proof following the technique of \cite[Proposition 4.1]{Clerc Stein}. Indeed one   needs to utilize    the fact that the derivatives of $ p(z,\lambda) $ have bounds  independent of $ z $ and this will  eventually     lead us to  \eqref{eqn:est_of_e_z(t)}.
	\end{proof}
 
Now coming back to the proof of our claim  \eqref{eqn:separation_of_kernel},    we can write
 \begin{align*}
	\mathcal{K}_1(za_s,a_t) &=  \eta^\circ(a_t) \Delta(t) \int_\R \varPhi(\lambda) \sigma(za_s,\lambda ) \phi_{\lambda}(a_t) |c(\lambda)|^{-2} d\lambda\\
	&	\quad \quad + \eta^\circ(a_t)  \Delta(t) \int_\R  (1-\varPhi(\lambda))\sigma(za_s,\lambda ) \phi_{\lambda}(a_t) |c(\lambda)|^{-2} d\lambda.
\end{align*}
We observe that $ \varPhi(\lambda) \sigma(z a_s,\lambda) $ satisfy the hypotheses of  Proposition \ref{prop:expression_of_p(z,lambda)}; so we choose
\begin{align*}
	\zeta(z a_s,t) =\Delta(t) e_{z a_s}(t) + \eta^\circ(a_t) \int_\R  (1-\Phi(\lambda))\sigma(za_s,\lambda ) \phi_{\lambda}(a_t) |c(\lambda)|^{-2} d\lambda.
\end{align*}
Since $ |e_{z a_s}(t) |\leq C e_0(t) $ and $e_0\in L^1(G//K)$, so $ \Delta(t) e_{z a_s}(t) \in L^1(\R)$.
The second  term of $\zeta(z a_s,t) $ is bounded by $ \eta^\circ \cdot \int_{0}^2 |\sigma(za_s,\lambda )|c(\lambda)|^{-2} d\lambda  \leq C \, \eta^\circ$, ($ C  $ is independent of $ z $). Therefore the function $ \zeta $ satisfy the inequality \eqref{eqn:condn_zeta_local_part}, this also concludes our first step \eqref{eqn:separation_of_kernel}. \\

\textbf{Step II : Connection with Euclidean pseudo differential operator.}

Following \textbf{ Step I}, it is clear that to establish Lemma \ref{lemma:Coifman-Weiss_condition_our_case}; we need to prove $\mathcal{K}_0$ satisfies  \eqref{eqn:Coifman-Weiss_condn_satisified_by_K}.  We will do that by showing $ \{\mathcal{K}_0(z a_s, t) : z\in G \}$ are kernels of Euclidean pseudo differential operators corresponding to a family of symbols $ \{ a_z(s,y): z\in G \} $, belonging  to the symbol class $ \mathcal{S}^0. $ In fact, we will prove the following  

 \begin{equation}\label{eqn:condn_on_K_0}
	\left| \partial_s^\beta \partial_y^\alpha \int_{-\infty}^{\infty} e^{-2\pi ixy } \mathcal{K}_0(za_s,x)dx \right| \leq C_{\alpha,\beta } (1+|y|)^{-\alpha}  \quad \alpha, \beta =0,1,2,
\end{equation}
for all $z\in G$.   Assuming the inequality above, we will  complete the proof of  Lemma \ref{lemma:Coifman-Weiss_condition_our_case} below.

\noindent
\begin{proof}[\textbf{Proof of  Lemma \ref{lemma:Coifman-Weiss_condition_our_case} assuming \eqref{eqn:condn_on_K_0} : }]
	
For each $z\in G$, we define
\bes
a_z(s,y):=   \int_{-\infty}^{\infty} e^{-2\pi ixy } \mathcal{K}_0(za_s,x) dx, s, y\in \R.
\ees
We  observe that the constants $ C_{\alpha,\beta} $ in \eqref{eqn:condn_on_K_0} are independent of $ z\in G $. Hence from (\ref{eqn:condn_on_K_0}) it follows that the family of symbols $ \{a_z(s,y) : z \in G\} $ satisfies the hypothesis of Theorem \ref{thm:pdo_for_family_of_symbols} and hence (\ref{eqn:Coifman-Weiss_condn_satisified_by_K}) holds, which also concludes the Lemma  \ref{lemma:Coifman-Weiss_condition_our_case}.
\end{proof}

Now it only remains to prove that $\mathcal{K}_0$ satisfies  \eqref{eqn:condn_on_K_0}.  We recall the definition of $ \mathcal{K}_0 $ from \eqref{eqn:separation_of_kernel},
\begin{equation}
	 \mathcal{K}_0(za_s,t)  =  C \eta^\circ(a_t )   \left( \frac{t^{d-1}}{\Delta(t)}\right)^{1/2} \int_{0}^\infty \Phi(\lambda)  \sigma(z a_s,\lambda ) \mathcal{J}_{(d-2)/2} (\lambda t) |c(\lambda)|^{-2} d\lambda,
\end{equation}
 Next we will consider  separately the cases $ d $ odd and $ d $ even.

When $ d $ is even, we write
\begin{equation*}
	\mathcal{J}_{(d-2)/2}= C(z^{-1} \partial_z)^{(d-2)/2} \mathcal{J}_0(z),
\end{equation*}
and 
\begin{align*}
	\mathcal{J}_0 (\lambda t) =\frac{2}{\pi} \int_{\lambda}^{\infty} (\mu^2-\lambda^2)^{-1/2} \sin \mu t d\mu.
\end{align*}
Let $ q(z,\lambda) :=  (\partial_\lambda \cdot (1/\lambda)) ^{(d-2)/2} \left(  \Phi(\lambda) \sigma(z,\lambda) |c(\lambda)|^{-2}     \right)$. Then using integration by parts we have,
\begin{align*}
	\mathcal{K}_0(z a_s, t) &=C \eta^\circ(a_t) \left[ \frac{\Delta(t)}{t^{d-1}} \right]^{\frac{1}{2}}\, t \int_{\R} q(za_s,\lambda)  \mathcal{J}_0 (\lambda t) d\lambda \\
	& = C \eta^\circ(a_t) \left[ \frac{\Delta(t)}{t^{d-1}} \right]^{\frac{1}{2}}\, t \int_{\R}   \sin \mu t \int_0^\mu  q(za_s,\lambda)  (\mu^2-\lambda^2)^{-1/2} d\lambda \,d\mu \\
	& =  C \eta^\circ(a_t) \left[ \frac{\Delta(t)}{t^{d-1}} \right]^{\frac{1}{2}}\,  \int_{\R}   \cos \mu t  \frac{d}{d \mu }\int_0^\mu  q(za_s,\lambda)  (\mu^2-\lambda^2)^{-1/2} d\lambda\, d\mu.
\end{align*}

Let us define
\begin{equation}\label{eqn:defn_of_g(z,mu)}
	\begin{aligned}
		g(z,\mu) &= \int_{0}^\mu (\mu^2-\lambda^2)^{-1/2}  q(z,\lambda) \,d\lambda,\quad z\in G,  \mu\geq 0\\
		&= \frac{1}{2}   \int_{-1}^{1}  (1-\lambda^2)^{-1/2} q(z,\lambda \mu)\, d\lambda  \quad z\in G,  \mu>0.
	\end{aligned}
\end{equation}
and 
\begin{eqnarray*}
	h(z,\mu) = \left( \frac{\partial}{\partial\mu} g \right) (z,|\mu|) , \quad \mu \in \R.
\end{eqnarray*}
Then   we obtain
\begin{align*}
	\mathcal{K}_0(za_s,t)&=  -C \eta^\circ(a_t) \left[ \frac{\Delta(t)}{t^{d-1}} \right]^{1/2}\,  \int_0^\infty     \frac{d}{d\mu} \, g (za_s,\mu) \cos \mu t  \, d\mu\\
	&=   -C \eta^\circ(a_t)\left[ \frac{\Delta(t)}{t^{d-1}} \right]^{1/2}\,  \int_{-\infty}^\infty   h(za_s,\mu) e^{i\mu t} d\mu.
\end{align*}
Let $ \nu(t) =-{C \eta^\circ(a_t)} \left[ \frac{\Delta(t)}{t^{d-1}} \right]^{1/2}, \,  t \in \R$. Then \be\label{eqn:ftK0}
\mathcal{F}(\mathcal{K}_0(za_s,\cdot) )(\mu) =\left( \mathcal{F}(\nu) \ast_{\R} h(za_s,\cdot)\right)(\mu).
\ee
First we claim that  \begin{equation}\label{eqn:est_required_for_h}
	\sup_{\mu \in \R} \left( \left| \partial_s^\beta h(za_s, \mu)\right| +\left|  (1+\mu)\partial_s^\beta \partial_\mu h(za_s, \mu)\right|\right) <C_\beta\quad \text{ for all } \beta,
\end{equation}
where the constants $ C_\beta $ are independent of $ z\in G $.  From  \eqref{eqn:defn_of_g(z,mu)} we have
\begin{align*}
	h(za_s, \mu) = \frac{1}{2}   \int_{-1}^{1}  (1-\lambda^2)^{-1/2} \lambda \frac{\partial}{\partial (\lambda \mu)}q(z,\lambda \mu)\, d\lambda \quad \text{ for $ \mu >0 $}.
\end{align*} 
Then using (\ref{eqn:hypothesis_of_sigma})  and ($\ref{lemma:est_of_|c(-lambda)|^-2}$) we get
\begin{equation}
	\left| \frac{\partial^\beta}{\partial s^\beta}\frac{\partial^l}{\partial \lambda^l} q(za_s,\lambda)\right|  \leq C_{\beta,l}(1+|\lambda|)^{1-l} \quad  \text{for all } \beta, l.
\end{equation}
Hence \begin{align*}
	|\frac{\partial^\beta}{\partial s^\beta} h(za_s,\mu)| &\leq C_\beta \int_{0}^1 (1-\lambda^2)^{-1/2} \lambda \,d\lambda\\
	& \leq C_\beta,
\end{align*}
Also,  \begin{align*}
	|(1+\mu)\frac{\partial^\beta}{\partial s^\beta}  \frac{\partial}{\partial \mu}  h(za_s,\mu)| &\leq C_\beta \int_{0}^1 (1-\lambda^2)^{-1/2} \frac{\lambda^2}{(1+|\lambda \mu| )}  d\lambda + C_\beta |\mu| \int_{0}^1 (1-\lambda^2)^{-1/2} \frac{\lambda^2}{(1+|\lambda \mu| )}  d\lambda \\
	&\leq C_\beta \int_{0}^1 (1-\lambda^2)^{-1/2}  {\lambda^2}  d\lambda+ C_\beta |\mu| \int_{0}^1 (1-\lambda^2)^{-1/2} \frac{\lambda}{| \mu| }  d\lambda \\
	&\leq C_\beta.
\end{align*}
Similarly we can prove that 
\bes
\left|  (1+\mu)^\alpha \partial_s^\beta \partial_\mu^\alpha h(za_s, \mu)\right|\leq C_{\alpha, \beta} \, \text{ for all } \alpha, \beta.
\ees
Therefore, from (\ref{eqn:ftK0}), we conclude that $\mathcal{K}_0$ satisfies (\ref{eqn:condn_on_K_0}) using the fact that $ \mathcal{F}(\nu) $ is a Schwartz function.

When $ d $ is odd, we  write 
\bes \mathcal{J}_{(d-2)/2} (z) =C (z^{-1} \partial_z)^{(d-1)/2} (\cos z).\ees
After integration by parts, we see that it suffices to prove  $  ( \partial_\lambda\cdot (1/\lambda))^{(d-1)/2} (\Phi(\lambda)\sigma(za_s,t) |c(\lambda)|^{-2})$  satisfies \eqref{eqn:condn_on_K_0}, which follows from the estimates \eqref{estimate of |c(-lambda)|^-2} on $ |c(\lambda)|^{-2} $ and the hypothesis on $ \sigma $.


\section{\textbf{Global analysis of the pseudo differential operator $ \Psi_{\sigma} $}}\label{analysis on global part}

In the previous section, we saw how $ \mathcal{K}_0 $ behaved like a kernel of Euclidean pseudo differential operators, and using the transference method we got a  bound for  $ L^p $ operator norm of  the local part of  $ \Psi_{\sigma}$. But the analysis on the global part changes drastically as the local and global behavior of $ \phi_{\lambda} $   entirely different.   
In multiplier theory the author in \cite{Ionescu 2000} used a transference theorem for convolution operator which is comparable to the Herz majorizing principle. For higher rank  case  the author used the same principle  to estimate the $ L^p $ norm of multiplier operator (see \cite[Lemma 4.3]{Ionescu 2002}). Clearly such tools are not applicable in our setting. 

In this section we will prove the $ L^p $ boundedness of $ \Psi_{\sigma}^{glo} $. Here we will use the expansion of  spherical function away from the origin (Proposition \ref{propn:global_expansion_of_phi_lambda}), and will see the global analysis of $ \Psi_{\sigma} $ has no Euclidean analogue.   The broad strokes of our approach in this section follow that of \cite{Ionescu 2000}. Also Theorem \ref{thm:pdo_for_sym_space} will be derived as a consequence of following theorem and Theorem \ref{thm:Lp_boundedness_of_T_sigma_local}. Before stating the main result in this section, let us recall the definition of  $ \Psi^{ {glo}}_{\sigma}$, the global part of the operator $ \Psi_\sigma $ from \eqref{eqn:defn_of_T_sigma_global},
 \begin{equation}
	  \Psi^{ {glo}}_{\sigma}f(x) =\int_{\X} f(y)\eta(y^{-1}x) \mathcal{K}(x,y) \, dy ,
\end{equation}
where 
\bes
 \mathcal{K}(x,y) = \int_{\R} \sigma (x,\lambda) \phi_\lambda(y^{-1}x ) |c(\lambda)|^{-2} d\lambda.
\ees

 Then we have the following:
 \begin{theorem}\label{thm:Lp_boundedness_of_T_sigma_global}
Suppose  $ p\in(1,2)\cup(2,\infty) $. Then there is a constant $C>0$ such that \be \label{eqn:est_of_Lp_norm_of_Tsigma^glo_f}\|\Psi^{glo}_{\sigma}f\|_{L^p(\X)}\leq C \|f\|_{L^p(\X)},
\ee
 for all $ f\in L^p(\X) $. 
 \end{theorem}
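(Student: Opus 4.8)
The plan is to reduce everything to a pointwise kernel estimate that is \emph{uniform in $x$}, and then to use that uniformity to dominate $\Psi^{glo}_\sigma$ by a single positive $K$-biinvariant convolution operator, whose $L^p$-boundedness can be read off from the Herz majorizing principle. As in the local analysis I would first regularize, replacing $\sigma(x,\lambda)$ by $\sigma(x,\lambda)e^{-\epsilon\lambda^2}$ with $0<\epsilon\le 1$, keeping all bounds uniform in $\epsilon$ so that a limiting argument recovers the general case. Writing $t=|y^{-1}x|$ for the Cartan radial part of $y^{-1}x$ and inserting the global expansion \eqref{eqn:phi-lambda-assymp-expansion} of $\phi_\lambda(a_t)$ into \eqref{eqn:defn_of_K(x,y)}, I would use $|c(\lambda)|^{-2}=c(\lambda)^{-1}c(-\lambda)^{-1}$ together with the evenness of $\sigma(x,\cdot)$ and the substitution $\lambda\mapsto-\lambda$ to collapse the two terms of the expansion into one oscillatory integral,
\begin{equation*}
\mathcal{K}(x,y)=2\,e^{-\rho t}\int_{\R}\sigma(x,\lambda)\,\frac{1+a(\lambda,t)}{c(-\lambda)}\,e^{i\lambda t}\,d\lambda .
\end{equation*}

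Next I would shift the contour from $\R$ to the line $\Im\lambda=\rho_p$. This is legitimate because $\sigma(x,\cdot)$ is holomorphic on $S_p^\circ$, $c(-\lambda)^{-1}$ is holomorphic on $\{\Im\lambda\ge 0\}$ by \eqref{est: cminusla}, and $a(\lambda,t)$ is controlled on $0\le\Im\lambda\le\rho+\tfrac1{10}$ by \eqref{eqn:est_for_a(lambda,r)}, while the Gaussian factor provides decay as $\Re\lambda\to\pm\infty$. The shift converts the oscillation into the decay $e^{i\lambda t}=e^{i\xi t}e^{-\rho_p t}$ on $\lambda=\xi+i\rho_p$. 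By Lemma \ref{lemma:estimate_of_sigma_(lambda)_(c(-lambda)_e(-epsilo^2_lamda))} and \eqref{eqn:est_for_a(lambda,r)} the amplitude $A:=\sigma\,(1+a)/c(-\cdot)$ obeys $|\partial_\lambda^j A|\le C_j(1+|\lambda|)^{(d-1)/2-j}$ with constants independent of $x$, so integrating by parts $N=[\tfrac{d+1}{2}]+1$ times in $\xi$ — exactly the order of regularity granted by \eqref{eqn:hypothesis_of_sigma} — renders the integrand absolutely integrable and produces the uniform bound
\begin{equation*}
|\mathcal{K}(x,y)|\le C\,e^{-(\rho+\rho_p)t}\,(1+t)^{-N},\qquad t=|y^{-1}x|\ge 1 .
\end{equation*}

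Because this bound does not see $x$, setting $\Xi(a_t):=\eta(t)\,e^{-(\rho+\rho_p)t}(1+t)^{-N}$ gives the pointwise domination $|\Psi^{glo}_\sigma f(x)|\le(|f|\ast\Xi)(x)$ by a positive $K$-biinvariant convolutor. For $1<p<2$ I would then invoke the Herz majorizing principle, by which boundedness of convolution with $\Xi$ on $L^p(\X)$ follows from the finiteness of $\int_G\Xi(g)\,\phi_{i\rho_p}(g)\,dg$; using the polar formula \eqref{polar} with $\Delta(t)\sim e^{2\rho t}$ and the leading asymptotics $\phi_{i\rho_p}(a_t)\sim C\,e^{(\rho_p-\rho)t}$ coming from \eqref{eqn:phi-lambda-assymp-expansion}, the three exponentials cancel and only $\int_1^\infty(1+t)^{-N}\,dt<\infty$ remains, which holds since $N\ge 2$. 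The range $2<p<\infty$ follows by duality, as $\Xi$ is symmetric and $\rho_p=\rho_{p'}$. Combined with Theorem \ref{thm:Lp_boundedness_of_T_sigma_local}, this yields Theorem \ref{thm:pdo_for_sym_space}.

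The main obstacle is exactly the feature that separates us from the multiplier theory: since $\sigma$ depends on $x$, the operator $\Psi^{glo}_\sigma$ is not a convolution, so neither the transference argument of \cite{Ionescu 2000} nor the Herz principle applies to it directly. What rescues the argument is that every constant in the kernel estimate is uniform in $x$ — which is precisely why the inequalities \eqref{eqn:hypothesis_of_sigma} are imposed with $x$-independent bounds — and this is what permits the passage to the positive convolutor $\Xi$. A second, quantitative difficulty is that $\mathcal{K}(x,y)$ multiplied by the Jacobian $\Delta(t)$ grows like $e^{(\rho-\rho_p)t}$, so any naive reduction to a one-dimensional convolution on $A\cong\R$ via Young's inequality must fail; the decay $e^{-(\rho+\rho_p)t}$ gained from the contour shift is consumed exactly against $\Delta(t)$ and the growth of $\phi_{i\rho_p}$, leaving only the polynomial margin $(1+t)^{-N}$ to force convergence. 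Verifying the Herz criterion at this borderline — equivalently, carrying out the underlying $\ol N A K$ estimate through Lemma \ref{lemma:expression_of_v-a _r}, the integrability \eqref{eqn:L1_norm_P(v-) (1+epsilon)_is_finite}, and the dilation identity \eqref{eqn:dialation_formula_for_N-} — is the technical heart of the proof.
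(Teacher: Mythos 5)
Your argument is correct, but it is genuinely different from the paper's, so let me compare. You extract a pointwise bound $|\mathcal K(x,y)|\le C e^{-(\rho+\rho_p)t}(1+t)^{-N}$, $t=(y^{-1}x)^+\ge 1$, \emph{uniform in $x$} — and indeed the expansion \eqref{eqn:phi-lambda-assymp-expansion}, the shift to $\Im\lambda=\rho_p$ and $N=[\frac{d+1}{2}]+1$ integrations by parts give exactly this, because the bounds \eqref{eqn:hypothesis_of_sigma}, \eqref{est: cminusla}, \eqref{eqn:est_for_a(lambda,r)} are $x$- and $t$-uniform and $N>\frac{d+1}{2}$ makes $\int(1+|\xi|)^{(d-1)/2-N}d\xi$ converge — and then you majorize $\Psi^{glo}_\sigma$ by the positive $K$-biinvariant convolutor $\Xi$, for which the exponentials in $\int_G\Xi\,\phi_{i\rho_p}$ cancel and $N\ge 2$ closes the Herz criterion. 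The paper never produces a pointwise kernel bound: it rewrites $\Psi^{glo}_\sigma=\mathfrak E^-+\mathfrak E^+$ on the group $\ol N A$ according to the sign of $t-s$, tests against $\mathfrak h\in L^{p'}(\ol NA)$, shifts the contour to a line depending on $[\ol w a_{\pm r}]^+$, and closes the bilinear estimate with H\"older in the $\ol N$-slices together with Lemma \ref{lemma:expression_of_v-a _r} and \eqref{eqn:L1_norm_P(v-) (1+epsilon)_is_finite}. Your route is shorter and is essentially the classical Clerc--Stein/Stanton--Tomas treatment of the global part of a multiplier, which works here precisely because the symbol estimates hold uniformly in $x$ up to the boundary of $S_p$; the paper's heavier $\ol NA$ argument is the one that would survive if $\sigma$ were allowed singularities at $\pm i\rho_p$ (where your integral $\int_G\Xi\,\phi_{i\rho_p}\,dg$ diverges), which is the regime Ionescu's method was designed for and which the authors flag as an open problem. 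Two small caveats: the paper's remark that Herz-type transference ``is not applicable'' refers to applying it to $\Psi^{glo}_\sigma$ itself, which is not a convolution — domination by a convolution, as you do, is not thereby excluded; and your closing sentence equating the Herz verification with the $\ol N A K$ estimate overstates the connection, since once the uniform kernel bound is in hand the Herz integral is a one-line computation and none of Lemma \ref{lemma:expression_of_v-a _r}, \eqref{eqn:L1_norm_P(v-) (1+epsilon)_is_finite}, or \eqref{eqn:dialation_formula_for_N-} is needed.
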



Suppose $ \chi^+ $ and $ \chi^- $ be the characteristic functions of  the intervals $ [0,\infty) $ and $ (-\infty, 0) $ respectively.  {{Next we define the following operators on the $ \ol N A $ group,}}
\begin{equation}\label{eqn_defn_of_E^-}
	\begin{aligned}
	 \mathfrak{E}^-{\mathfrak f}(\ol n a_t) &=  \int_{\ol N} \int_{\R} \mathfrak f(\ol ma_s )  \eta(\delta_{-s}({\ol m}^{-1}\ol n ) a_{t-s}) \bigg(\int_{ \R}\sigma(\ol  n a_t, \lambda)   \phi_\lambda(\delta_{-s}({\ol m}^{-1}\ol n ) a_{t-s})  |c(\lambda)|^{-2}\, d\lambda\bigg)\\
	 &\cdot  \chi^-(t-s)\,e^{2\rho s}  |c(\lambda)|^{-2}\, d\lambda\,  ds\,     d{\ol  m},
\end{aligned}
\end{equation}
and 
\begin{equation}\label{eqn_defn_of_E^+}
	\begin{aligned}
	 \mathfrak{E}^+\mathfrak f(\ol n a_t) &=  \int_{\ol N} \int_{\R} \mathfrak f(\ol ma_s )  \eta(\delta_{-s}({\ol m}^{-1}\ol n ) a_{t-s}) \bigg(\int_{ \R}\sigma(\ol  n a_t, \lambda)   \phi_\lambda(\delta_{-s}({\ol m}^{-1}\ol n ) a_{t-s})  |c(\lambda)|^{-2}\, d\lambda\bigg)\\
	 &\cdot  \chi^+(t-s)\,e^{2\rho s}  |c(\lambda)|^{-2}\, d\lambda\,  ds\,     d{\ol  m}.
\end{aligned}
\end{equation} 
Equipped with the expression of  above, we aim to prove the  $ L^p $ norm estimates of $ \mathfrak{E}^{\pm} $ in  $ {\ol N}A$ group. The reason for this is that,  
 \begin{equation}
	\Psi_{\sigma}^{glo}\mathfrak f(\cdot)= \mathfrak{E}^-\mathfrak f(\cdot) +  \mathfrak{E}^+ \mathfrak f (\cdot), \quad \text{ in the $ \ol N A $ group},
\end{equation}
and so the $ L^p(\ol N A) $ operator norm of  $\mathfrak{E}^{-} +\mathfrak{E}^{+}   $ dominates the operator norm of $ \Psi_{\sigma}^{glo} $,  { $||| \Psi_{\sigma}^{glo}|||_{L^p(\ol N A)} $, which in turn equals to   $||| \Psi_{\sigma}^{glo}|||_{L^p(\X)} $.} Thus the required estimate  of $ \Psi_{\sigma}^{glo} $ of desired form \eqref{eqn:est_of_Lp_norm_of_Tsigma^glo_f} will follow, if we show that $ \mathfrak{E}^{\pm} $ maps $ L^{p}(\ol N A) $ boundedly to itself with operator norm bounded by constant. So in the reminder of this section we focus on the boundedness of $ \mathfrak{E}^{\pm} $.

Before proceeding  further we recall  that an we may assume that  $ \sigma(\cdot, \lambda)$ has exponential  decay as $ \lambda \rightarrow \infty $. In fact, we assume  $ \sigma(\cdot, \lambda) $ is multiplied with a factor of the form  $ e^{-\epsilon\lambda^2} $, where $  0 <\epsilon\leq1 $, and then sending $ \epsilon  $ towards zero, we will get the required estimates. 
Let us summarize this discussion by writing: suppose $\mathfrak{f}, \mathfrak{h}$ be two compactly supported functions on the group $ \ol N A $, and let us denote
\begin{equation}\label{eqn:defn_of_E^pm}
	\begin{aligned}
		\left\langle\mathfrak{E}^{\pm}_\epsilon \mathfrak{f}, \mathfrak{h} \right\rangle   :=& \int_{\ol N} \int_{\ol N} \int_{\R }\int_{ \R}\int_{\R}  \mathfrak{f}(\overline{m}a_s)\mathfrak{h}(\overline{n}a_t)  \eta (\delta_{-s}(\overline{m}^{-1}\overline{n})a_{t-s})   \sigma(\overline{n}a_t,\lambda) \phi_\lambda(\delta_{-s}(\overline{m}^{-1}\overline{n})a_{t-s})   \\ 
		& \cdot   \chi^{\pm}(t-s)  e^{2\rho(t+s)} e^{-\epsilon \lambda^2}|c(\lambda)|^{-2}  \,  d\lambda\,  dt\,  ds\, d\ol{n} \,d\ol m.
	\end{aligned}
\end{equation}
Then we aim to prove 
\begin{equation}\label{eqn:req_est_E^pm_e}
	 \lim \limits_{\epsilon \rightarrow 0} \left|\left\langle \mathfrak{E}^{\pm}_{\epsilon} \mathfrak{f,h}\right\rangle \right| \leq C \|\mathfrak{f}\|_{L^p(\ol N A)} \|\mathfrak h\|_{L^{p'}(\ol N A)},
\end{equation}
for some constant $ C>0 $ and $ p'=p/(p-1 )$ is the conjugate exponent of $ p $. Now we proceed to prove \eqref{eqn:req_est_E^pm_e}. By a change of variable \eqref{eqn:defn_of_E^pm} transforms to,
\begin{equation} \label{eqn:equiv_defn_E^pm}
	\begin{aligned}
		\left\langle\mathfrak{E}^{\pm}_\epsilon \mathfrak{f}, \mathfrak{h} \right\rangle   =& \int_{\ol N} \int_{\ol N} \int_{\R }\int_{ \R}\int_{\R}  \mathfrak{f}(\overline{m}a_s)\mathfrak{h}(\ol {m}\,\overline{v}a_t) \eta(\delta_{-s}(\overline{v})a_{t-s})         \sigma(\ol {m}\,\overline{v}a_t,\lambda) \phi_\lambda(\delta_{-s}(\overline{v})a_{t-s})    \\
		& \cdot \chi^{\pm}(t-s) e^{2\rho(t+s)}\,   e^{-\epsilon \lambda^2}|c(\lambda)|^{-2}d\lambda\,  dt \, ds\, d\ol{v} \,d\ol m.
	\end{aligned}
\end{equation}
\subsection{\textbf{ $ L^p$ operator norm estimate of  $ \mathfrak{E} ^-$}}  \hfil

First we prove the estimate \eqref{eqn:req_est_E^pm_e} for $ \mathfrak{E}^- $. Here we will see how the holomorphic extension property of $ \sigma(\cdot,\lambda) $ corresponds to an exponential decay. By writing $  t=s-r$ and using \eqref{eqn:dialation_formula_for_N-}, we get from \eqref{eqn:equiv_defn_E^pm}
\begin{equation}\label{eqn:final_expression_of_E^-}
\begin{aligned}
		\left\langle\mathfrak{E}^{-}_\epsilon \mathfrak{f}, \mathfrak{h} \right\rangle = &\int_{r=0}^\infty  \int_{\ol N} \int_{\ol N} \int_{\R }\int_{ \R} \mathfrak f(\overline{m}a_s)\mathfrak h(\overline{m} \, \delta_s(\ol w) \,a_{s-r})  \eta(\ol w \,a_{-r})  \sigma(\overline{m} \, \delta_s(\ol w)\, a_{s-r},\lambda)     \\
	&\cdot  \,\phi_\lambda(\ol w \,a_{-r})   e^{2\rho(s-r)}\,  e^{-\epsilon \lambda^2}|c(\lambda)|^{-2} d\lambda  \, ds\, d\ol{w} \,d\ol m\,  dr.
\end{aligned}
\end{equation}
Now we will use Harish-chandra expansion of the spherical function $ \phi_{\lambda}$. So we substitute the expansion \eqref{eqn:phi-lambda-assymp-expansion} into \eqref{eqn:final_expression_of_E^-} and notice the integrand is $ W $-invariant. One has 

\begin{equation}\label{eqn:I^{1,epsilon}=I^1_{1}+I^1_{2}}
	\begin{aligned}
		\left\langle\mathfrak{E}^{-}_\epsilon \mathfrak{f}, \mathfrak{h} \right\rangle =&2  \int_{r=0}^\infty \int_{\ol N} \int_{\ol N} \int_{\R }\int_{ \R} \mathfrak f(\overline{m}a_s)\mathfrak h(\overline{m} \, \delta_s(\ol w) a_{s-r})   \eta(\ol w \,a_{-r})  \sigma(\overline{m} \, \delta_s(\ol w) a_{s-r},\lambda)\\
		&\cdot    e^{2\rho(s-r)}  \,e^{(i\lambda-\rho)[\ol w a_{-r}]^+}   \frac{e^{-\epsilon \lambda^2}}{c(-\lambda)} d\lambda  \, ds\, d\ol{w} \,d\ol m\,  dr \\
		+&2 \int_{r=0}^\infty  \int_{\ol N} \int_{\ol N} \int_{\R }\int_{ \R} \mathfrak f(\overline{m}a_s)\mathfrak h(\overline{m} \, \delta_s(\ol w) \,a_{s-r})    \eta(\ol w \,a_{-r}) \sigma(\overline{m} \, \delta_s(\ol w) a_{s-r},\lambda)   \\ 
		&\cdot   e^{2\rho(s-r)}\,e^{(i\lambda-\rho-2)[\ol w a_{-r}]^+}a(\lambda,[\ol w a_{-r}]^+)  \frac{e^{-\epsilon \lambda^2}}{c(-\lambda)} 
		\, d\lambda  \, ds\, d\ol{w} \,d\ol m\,  dr\\
		&=	\left\langle\mathfrak{E}^{-}_{0,\epsilon} \mathfrak{f}, \mathfrak{h} \right\rangle +	\left\langle\mathfrak{E}^{-}_{e,\epsilon} \mathfrak{f}, \mathfrak{h} \right\rangle.
	\end{aligned}
\end{equation}
We recall the function $ \lambda\rightarrow a(\lambda,\cdot) $ satisfy   favorable symbol-type estimate (only for rank one case). Moreover, by Leibniz rule one has    for all $ j\in [0,N] $
		\begin{equation}\label{eqn:est_of_sigma*a*c^{-1}}
		\lim_{\epsilon \rightarrow 0} \left| \frac{\partial^j}{\partial \lambda^j} \left( \frac{\sigma(\ol n a,\lambda)\, a(\lambda,r)e^{-\epsilon \lambda^2}}{c(-\lambda)}\right)\right| \leq \frac{C_j}{(1+|\lambda|)^{j-\frac{(d-1)}{2}}}  
	\end{equation}
for all $ \lambda \in \C$  with $ 0 \leq \Im \lambda \leq \rho_p $, $ r\geq1 $ and $ \ol n a \in \ol N A  $. Therefore the required estimate of $ \left\langle\mathfrak{E}^{-}_{e,\epsilon} \mathfrak{f}, \mathfrak{h} \right\rangle $, induced by the error term $ a(\lambda,[\ol w a_{-r}]^+) $, follows similarly as of  $ \left\langle\mathfrak{E}^{-}_{0,\epsilon} \mathfrak{f}, \mathfrak{h} \right\rangle $. So we only need to prove the lemma below.

 \begin{lemma}\label{lemma:est_of_I^{1,epsilon}_{1}}
	We have  \begin{equation}\label{eqn:est_of_E_0^-}
		\lim_{\epsilon\rightarrow 0}\left| \left\langle\mathfrak{E}^{-}_{0,\epsilon} \mathfrak{f}, \mathfrak{h} \right\rangle\right|	 \leq C \| \mathfrak f\|_{L^p(\ol N A )} \,  \| \mathfrak{ h} \|_{L^{p'}(\ol N A)}.
	\end{equation}
\end{lemma}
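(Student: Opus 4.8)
The plan is to use the holomorphic extension of $\sigma(\cdot,\lambda)$ on the strip $S_p^\circ$ to turn the oscillatory inner $\lambda$-integral in the principal term $\mathfrak E^-_{0,\epsilon}$ of \eqref{eqn:final_expression_of_E^-} into a rapidly decaying kernel, and then to collapse the remaining fourfold integral to a single convergent integral over $\ol N$ by H\"older's inequality and the dilation structure of $\ol N A$. Throughout I keep the regularizing factor $e^{-\epsilon\lambda^2}$, prove bounds uniform in $0<\epsilon\le1$, and send $\epsilon\to0$ only at the end; I treat $p\in(1,2)$ and obtain $p>2$ afterwards by duality. Write $K=K(\ol w,r):=[\ol w a_{-r}]^+\ge0$, which is $\ge1$ on the support of $\eta(\ol w a_{-r})$, and isolate the inner integral $\mathcal L_\epsilon(x;\ol w,r)=\int_\R\sigma(x,\lambda)\,e^{(i\lambda-\rho)K}\,e^{-\epsilon\lambda^2}c(-\lambda)^{-1}\,d\lambda$, where $x=\overline m\,\delta_s(\ol w)a_{s-r}$.

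First I shift the contour of $\mathcal L_\epsilon$ from $\R$ to $\R+i\rho_p$. This is legitimate because $\lambda\mapsto\sigma(x,\lambda)$ is holomorphic on $S_p^\circ$ and continuous on $S_p$, the function $\lambda\mapsto c(-\lambda)^{-1}$ is holomorphic on $\{\Im\lambda\ge0\}$, and the Gaussian kills the two vertical sides as $\Re\lambda\to\pm\infty$. The shift produces exactly the gain $e^{-\rho_p K}$ --- this is the mechanism turning holomorphy into exponential decay --- leaving $\mathcal L_\epsilon=e^{-(\rho+\rho_p)K}\int_\R\Theta_\epsilon(x,\lambda)\,e^{i\lambda K}\,d\lambda$ with $\Theta_\epsilon(x,\lambda)=\sigma(x,\lambda+i\rho_p)\,e^{-\epsilon(\lambda+i\rho_p)^2}c(-(\lambda+i\rho_p))^{-1}$. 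On this line Lemma \ref{lemma:estimate_of_sigma_(lambda)_(c(-lambda)_e(-epsilo^2_lamda))} (combining \eqref{eqn:hypothesis_of_sigma} with \eqref{est: cminusla}) gives $|\partial_\lambda^j\Theta_\epsilon|\lesssim(1+|\lambda|)^{(d-1)/2-j}$ for $0\le j\le M:=[\tfrac{d+1}{2}]+1$, uniformly in $x$ and $\epsilon$. Integrating by parts $M$ times in $\lambda$ and using $K\ge1$ then yields the $x$-independent bound $\limsup_{\epsilon\to0}|\mathcal L_\epsilon(x;\ol w,r)|\lesssim e^{-(\rho+\rho_p)K}(1+K)^{-M}$.

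Next I insert this bound into the $\mathfrak E^-_{0,\epsilon}$ term and estimate $\eta\le1$, reducing the claim to an estimate for $\int_0^\infty\!\int_{\ol N}\!\int_{\ol N}\!\int_\R|\mathfrak f(\overline m a_s)|\,|\mathfrak h(\overline m\delta_s(\ol w)a_{s-r})|\,e^{2\rho(s-r)}e^{-(\rho+\rho_p)K}(1+K)^{-M}\,ds\,d\ol w\,d\overline m\,dr$. Setting $F(\overline m,s)=|\mathfrak f(\overline m a_s)|e^{2\rho s/p}$ and $H(\ol n,t)=|\mathfrak h(\ol n a_t)|e^{2\rho t/p'}$, one has $\|F\|_{L^p}=\|\mathfrak f\|_{L^p(\X)}$ and $\|H\|_{L^{p'}}=\|\mathfrak h\|_{L^{p'}(\X)}$, and the $A$-weights telescope via $e^{-2\rho s/p}e^{-2\rho(s-r)/p'}e^{2\rho(s-r)}=e^{-2\rho r/p}$. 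For fixed $(\ol w,r)$, H\"older in $(\overline m,s)$ followed by the measure preserving change $\ol n=\overline m\delta_s(\ol w)$, $t=s-r$ makes the $H$-factor integrate to exactly $\|H\|_{p'}$, independently of $(\ol w,r)$. The whole estimate thus reduces to the finiteness of $I:=\int_0^\infty\!\int_{\ol N}e^{-2\rho r/p}e^{-(\rho+\rho_p)K}(1+K)^{-M}\,d\ol w\,dr$.

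Finally, substituting $\ol w=\delta_{-r}(\ol u)$ (so $d\ol w=e^{2\rho r}d\ol u$ by \eqref{eqn:dialation_formula_for_N-}) and using $\delta_{-r}(\ol u)a_{-r}=a_{-r}\ol u$, so that $K=[a_{-r}\ol u]^+=[\ol u^{-1}a_r]^+=r+H(\ol u^{-1})+E'$ with $0\le E'\le2e^{-2r}$ and $H(\ol u^{-1})\ge0$ by \eqref{eqn:H(v)-positive} and Lemma \ref{lemma:expression_of_v-a _r}, the $r$-exponentials combine to $e^{2\rho(1-2/p)r}$, integrable since $p<2$, while (using inversion invariance of the Haar measure on $\ol N$) the $\ol N$-integral is dominated by $\int_{\ol N}P(\ol u)^{1+\rho_p/\rho}\,d\ol u$, finite by \eqref{eqn:L1_norm_P(v-) (1+epsilon)_is_finite} with $\epsilon_0=\rho_p/\rho>0$. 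Hence $I<\infty$, which proves \eqref{eqn:est_of_E_0^-}; the range $p>2$ follows by duality, since the adjoint of $\mathfrak E^-$ has the same structure with $p'<2$. \emph{The main obstacle is the first half, namely the contour shift and the integration by parts:} since $\sigma\,c(-\lambda)^{-1}$ in fact grows like $(1+|\lambda|)^{(d-1)/2}$, the $\lambda$-integral is not absolutely convergent, and one must balance the holomorphy-induced shift, the oscillation $e^{i\lambda K}$, and exactly $[\tfrac{d+1}{2}]+1$ derivatives to extract decay; the argument closes only because the gained rate $\rho_p$ is matched to the weight $e^{2\rho(s-r)}$ in precisely the way that renders the $r$-integral convergent for $p<2$.
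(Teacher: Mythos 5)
Your proof is correct and follows essentially the same route as the paper's: shift the $\lambda$-contour to the top of the strip to convert holomorphy of $\sigma$ into the decay $e^{-2\rho[\ol w a_{-r}]^+/p}$, integrate by parts $\left[\frac{d+1}{2}\right]+1$ times using the combined bound of Lemma \ref{lemma:estimate_of_sigma_(lambda)_(c(-lambda)_e(-epsilo^2_lamda))}, apply H\"older's inequality with the weights $e^{2\rho s/p}$, and reduce everything to $\int_{\ol N}P(\ol u)^{1+\epsilon_0}\,d\ol u<\infty$ together with the expansion $[\ol u^{-1}a_r]^+=r+H(\ol u^{-1})+E$ and the convergent factor $e^{2\rho(1-2/p)r}$. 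The only cosmetic differences are that the paper shifts to $\Im\lambda=\rho_p-\rho/[\ol w a_{-r}]^+$ (the same gain up to the harmless constant $e^{\rho}$) and handles the negative exponent in $[\ol w a_{-r}]^+$ via the evenness of the Abel transform, where you instead use the dilation substitution $\ol w=\delta_{-r}(\ol u)$ combined with $[g]^+=[g^{-1}]^+$.
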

\begin{remark}\begin{enumerate}
		\item 	 We observe that, we have not used the holomorphic property of the symbol yet. In this lemma we will start using the holomorphic condition, and it will be clear how it   corresponds to an exponential decay.
\item   {	We will need the explicit expression of $ [\ol w a_{r}]^+ $, which is only available for $ r\geq 0 $, so we will use the evenness of Abel transform, whenever is $ r $ is negative.} 
\end{enumerate}
\end{remark}

\begin{proof}[Proof of Lemma \ref{lemma:est_of_I^{1,epsilon}_{1}}] 
	 
	 We begin by moving  the integration with respect to $ \lambda $ in \eqref{eqn:I^{1,epsilon}=I^1_{1}+I^1_{2}}  from $ \R $ to the line $ \R+ i (\rho_p-\rho/ [\ol w a_{-r}]^+) $, to get
	 
	 \begin{align*}
	 	\left\langle\mathfrak{E}^{-}_{0,\epsilon} \mathfrak{f}, \mathfrak{h} \right\rangle=  & C \int_{r=0}^\infty  \int_{\ol N} \int_{\ol N} \int_{\R }  \mathfrak f(\overline{m}a_s) \mathfrak h (\overline{m}  \, \delta_s(\ol w) \,a_{s-r}) \eta(\ol w a_{{-r}}) \\ 
	 	& \hspace*{-1.9cm}\cdot   \int_\R \vartheta_\epsilon \left(\overline{m} \, \delta_s(\ol w)a_{s-r},\lambda+i(\rho_p-\frac{\rho}{[\ol w a_{-r}]^+})\right) \,    e^{\left(i\lambda-\frac{2\rho}{p}+\frac{\rho}{[\ol w a_{-r}]^+}\right)[\ol w a_{-r}]^+}  d\lambda \,
	 	e^{2\rho(s-r)}\, ds\, d\ol{w} \, d\ol m\,  dr,
	 \end{align*}
 where 
 \be\label{eqn:sigma_epsilon} \vartheta_\epsilon(x,\lambda):=\dfrac{\sigma(x,\lambda)}{c(-\lambda)} e^{-\epsilon \lambda^2}.
 \ee 
 In order to effectively harness the oscillation of $ e^{i\lambda [\ol w a_{-r}]^+} $, we use integration by parts $ l\,(>\frac{d+1}{2}) $ times to obtain, 
	 
	 \begin{equation}\label{eqn:int_byparts_of_sigma_{epsilon}*e^{i lambda}}
	 	\begin{aligned}
	 		\int_\R \vartheta_\epsilon \left(\overline{m} \, \delta_s(\ol w)a_{s-r},\lambda+i(\rho_p-\frac{\rho}{[\ol w a_{-r}]^+})\right) \,e^{i\lambda[\ol w a_{-r}]^+}    d\lambda \\=  C \int_\R \frac{\partial^{l}}{\partial\lambda^{l}}\vartheta_\epsilon\left(\overline{m} \, \delta_s(\ol w)a_{s-r},\lambda+i(\rho_p-\frac{\rho}{[\ol w a_{-r}]^+})\right)
	 		\cdot \frac{e^{i\lambda[\ol w a_{-r}]^+}}{([\ol w a_{-r}]^+)^{l}}  d\lambda.
	 	\end{aligned}
	 \end{equation}
	 Now we define 
	 \begin{equation}\label{eqn:def_of_F_and_H}
	 	\begin{aligned}
	 		\mathfrak{F}(s) =& \left[ \int_{\ol N} |\mathfrak f(\ol m a_s)|^p d\ol m \right]^{\frac{1}{p}},\\
	 		\mathfrak{H}(s)= & \left[ \int_{\ol N} |\mathfrak{ h}(\ol m a_s)|^{p'} d\ol m \right]^{\frac{1}{p'}}.
	 	\end{aligned}
	 \end{equation}
After applying the H\"older's inequality, substituting \eqref{eqn:def_of_F_and_H}  gives  us the absolute value  of $	\left\langle\mathfrak{E}^{-}_{0,\epsilon} \mathfrak{f}, \mathfrak{h} \right\rangle$ dominated by

\begin{equation}
	\begin{aligned}
		&C \int_{r=0}^{\infty} \int_{\R}\mathfrak F (s) \mathfrak H(s-r) \int_{\ol N} \eta (\ol w a_{-r})\frac{ e^{(-\frac{2}{p}+\frac{1}{ [\ol w a_{-r}]^+})\rho[\ol w a_{-r}]^+}}{([\ol w a_{-r}]^+)^{l}} \,  \\
		&\hspace*{2cm}\cdot  \int_\R \|\frac{\partial^{l}}{\partial\lambda^{l}}\vartheta_\epsilon\left((\cdot),\lambda+i(\rho_p-\frac{\rho}{[\ol w a_{-r}]^+})\right)\|_{\infty} \, d\lambda\, d \ol w \, e^{2\rho(s-r)}\,  ds \,dr\\
		=&C \int_{r=0}^{\infty} \int_{\R}\mathfrak F(s) \mathfrak H(s-r) \int_{\ol N}  \eta (\ol w a_{r}) \frac{ e^{(-\frac{2}{p}+\frac{1}{ [\ol w a_r]^+})\rho[\ol w a_{r}]^+}}{([\ol w a_{r}]^+)^{l}} \,   \\
		&\hspace*{2cm}\cdot  \int_\R \|\frac{\partial^{l}}{\partial\lambda^{l}}\vartheta_\epsilon\left((\cdot),\lambda+i(\rho_p-\frac{\rho}{[\ol w a_{r}]^+})\right)\|_{\infty}\,  d\lambda \, d\ol w \,e^{2\rho s}\, ds \, dr,
	\end{aligned}
\end{equation}
where in the last line we invoked the evenness of Abel transform to change $ [w a_{-r}]^+ $ to  $[ w a_r]^+ $. We recall $ \eta $ is a  function supported in $ [1,\infty) $. Using this, Lemma \ref{lemma:estimate_of_sigma_(lambda)_(c(-lambda)_e(-epsilo^2_lamda))} and substituting the expression of $ [w a_r]^+ $ \eqref{eqn:expression_of_[va_r]}, yields  the following
\begin{equation*}
	\begin{aligned}
		&\lim_{\epsilon\rightarrow 0} \left| 	\left\langle\mathfrak{E}^{-}_{0,\epsilon} \mathfrak{f}, \mathfrak{h} \right\rangle \right| \\&\leq C   \int_{r=0}^{1} \left(\int_{\R}\mathfrak{F}(s) e^{\frac{2\rho}{p}s} \mathfrak{H}(s-r)e^{\frac{2\rho}{p'}(s-r)}  ds \right)\int_{\ol N}{ e^{-\frac{2}{p}\rho( H(\ol w) )}}\, d\ol w    \int_\R \frac{1}{(1+|\lambda|)^{l-{\frac{d-1}{2}}}}  d\lambda \,   dr\\
		&+ \int_{r=1}^{\infty} \left(\int_{\R}\mathfrak{F}(s) e^{\frac{2\rho}{p}s} \mathfrak{H}(s-r)e^{\frac{2\rho}{p'}(s-r)}  ds \right)\int_{\ol N}{ e^{-\frac{2}{p}\rho( H(\ol w) )}}\, d\ol w  \int_\R \frac{1}{(1+|\lambda|)^{l-{\frac{d-1}{2}}}}  d\lambda \,   \frac{e^{(\frac{2}{p'}-\frac{2}{p})\rho 
				r}}{{r^{l}} } dr.
	\end{aligned}
\end{equation*}
Finally we can reach \eqref{eqn:est_of_E_0^-} simply by an application of H\"older's inequality and \eqref{eqn:L1_norm_P(v-) (1+epsilon)_is_finite}.

\end{proof}


\subsection{\textbf{$ L^p $ operator norm estimate of  $ \mathfrak{E} ^+$}}\hfil

It remains to prove the operator norm estimate of $ \mathfrak{E}^+ $. Recalling the definition of $\left\langle\mathfrak{E}^{+}_{\epsilon} \mathfrak{f}, \mathfrak{h} \right\rangle$ from \eqref{eqn:equiv_defn_E^pm}, and after some changes of variables we have

\begin{equation} \label{eqn:equiv_defn_E^pm}
	\begin{aligned}
		\left\langle\mathfrak{E}^{+}_\epsilon \mathfrak{f}, \mathfrak{h} \right\rangle   =& \int_{\ol N} \int_{\ol N} \int_{\R }\int_{ \R}\int_{\R}  \mathfrak{f}(\overline{m}a_s)\mathfrak{h}(\ol {m}\,\overline{v}a_t) \eta(\delta_{-s}(\overline{v})a_{t-s})         \sigma(\ol {m}\,\overline{v}a_t,\lambda) \phi_\lambda(\delta_{-s}(\overline{v})a_{t-s})    \\
		& \cdot \chi^{+}(t-s) e^{2\rho(t+s)}\,   e^{-\epsilon \lambda^2}|c(\lambda)|^{-2}d\lambda\,  dt \, ds\, d\ol{v} \,d\ol m\\
		=& \int_{r=0}^\infty\int_{\ol  N}\int_{\ol N} \int_\R \int_\R \mathfrak f(\overline{m}a_s)\mathfrak h(\overline{m} \, \delta_s(\ol w) \,a_{s+r})  \eta(\ol w a_{r})\sigma(\overline{m}\, \delta_s{(\ol w)}\, a_{s+r},\lambda) \,\phi_\lambda(\ol w \,a_{r})       \\
		&\cdot   e^{2\rho(s+r)}\, e^{-\epsilon \lambda^2}|c(\lambda)|^{-2}d\lambda \, d\ol{w} \,d\ol m\,  ds\, dr.
	\end{aligned}
\end{equation}
We substitute Harish-chandra series expansion of $ \phi_{\lambda} $ \eqref{eqn:phi-lambda-assymp-expansion} into the expression above to get,
\begin{eqnarray}
	\left\langle\mathfrak{E}^{+}_\epsilon \mathfrak{f}, \mathfrak{h} \right\rangle   =  \left\langle\mathfrak{E}^{+}_{0,\epsilon} \mathfrak{f}, \mathfrak{h} \right\rangle   +\left\langle\mathfrak{E}^{+}_{e,\epsilon} \mathfrak{f}, \mathfrak{h} \right\rangle,   
\end{eqnarray}
where 
\begin{align}
	\begin{split}\label{eqn:final_defn_of_E_0^+}
 \left\langle\mathfrak{E}^{+}_{0,\epsilon} \mathfrak{f}, \mathfrak{h} \right\rangle =&2 \int_{r=0}^\infty \int_{\ol  N}\int_{\ol N} \int_\R \int_\R \mathfrak f(\overline{m}a_s)\mathfrak{h}(\overline{m} \, \delta_s(\ol w) \,a_{s+r}) \eta( \ol w a_r) \vartheta_{\epsilon}(\overline{m} \, \delta_s(\ol w)\, a_{s+r},\lambda) \,\\
 &\cdot e^{2\rho(s+r)}\, e^{(i\lambda-\rho)[\ol w a_{r}]^+}   \, d\lambda \, ds \,d\ol{w} \,d\ol  m\,  dr,
\end{split}\\
\begin{split}
 \left\langle\mathfrak{E}^{+}_{e,\epsilon} \mathfrak{f}, \mathfrak{h} \right\rangle,   =&2 \int_{r=0}^\infty\int_{\ol  N}\int_{\ol N} \int_\R \int_\R \mathfrak {f}(\overline{m}a_s) \mathfrak{h}(\overline{m} \, \delta_s(\ol w) \,a_{s+r}) \eta( \ol w a_r) \vartheta_{\epsilon}(\overline{m} \, \delta_s(\ol w)\, a_{s+r},\lambda) \,    \\
 & \cdot    e^{2\rho(s+r)}\, e^{(i\lambda-\rho-2)[\ol w a_{r}]^+}a(\lambda,[\ol w a_{r}]^+) \, d\lambda  \,ds \,d\ol{w} \,d\ol m\,  dr.
\end{split}
\end{align}
Now from \eqref{eqn:est_of_sigma*a*c^{-1}} and the discussion proceeding it, it's clear  \eqref{eqn:req_est_E^pm_e} will follow from the lemma below. The following lemma will also complete the proof of  Theorem \ref{thm:Lp_boundedness_of_T_sigma_global} for $ p\in (1,2) $.
\begin{lemma}\label{lemma:est_of_E_0^+}
	We have  \begin{equation}\label{eqn:est_of_E_0^+}
		\lim_{\epsilon\rightarrow 0}\left| \left\langle\mathfrak{E}^{+}_{0,\epsilon} \mathfrak{f}, \mathfrak{h} \right\rangle\right|	 \leq C \| \mathfrak f\|_{L^p(\ol N A )} \,  \| \mathfrak{ h} \|_{L^{p'}(\ol N A)}.
	\end{equation}
\end{lemma}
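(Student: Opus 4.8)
The plan is to follow the blueprint of the $\mathfrak{E}^-$ argument, exploiting the fact that in \eqref{eqn:final_defn_of_E_0^+} the displacement $r$ is already nonnegative. Starting from the expression for $\langle\mathfrak{E}^{+}_{0,\epsilon}\mathfrak f,\mathfrak h\rangle$, I would first fix $\ol w,\ol m,s,r$ and shift the contour of the inner $\lambda$-integral from $\R$ to the horizontal line $\R + i(\rho_p - \rho/[\ol w a_r]^+)$. This shift is legitimate because $\lambda\mapsto\vartheta_\epsilon(x,\lambda)$ from \eqref{eqn:sigma_epsilon} is holomorphic on the strip $0\le\Im\lambda\le\rho_p$: here the holomorphic extension of $\sigma(x,\cdot)$ on $S_p^\circ$ and the holomorphy of $c(-\lambda)^{-1}$ combine, while the regularizing factor $e^{-\epsilon\lambda^2}$ provides the decay needed to discard the horizontal pieces of the contour. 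Under this shift the phase $e^{(i\lambda-\rho)[\ol w a_r]^+}$ turns into $e^{i\lambda[\ol w a_r]^+}\,e^{-\frac{2\rho}{p}[\ol w a_r]^+ + \rho}$; this is the step that converts the holomorphy hypothesis into genuine exponential decay in the distance $[\ol w a_r]^+$.

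Next I would integrate by parts $l$ times in $\lambda$, with $l>(d+1)/2$ a fixed integer for which the hypotheses on $\sigma$ provide the required derivative bounds, in order to harvest the oscillation $e^{i\lambda[\ol w a_r]^+}$. This produces the gain $([\ol w a_r]^+)^{-l}$ and replaces $\vartheta_\epsilon$ by $\partial_\lambda^l\vartheta_\epsilon$, which by Lemma \ref{lemma:estimate_of_sigma_(lambda)_(c(-lambda)_e(-epsilo^2_lamda))} is bounded, after passing to the limit $\epsilon\to0$ and uniformly in the base point, by $C(1+|\lambda|)^{-(l-(d-1)/2)}$; since $l>(d+1)/2$ this is integrable in $\lambda$. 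I would then apply Hölder's inequality in the $\ol m$-variable, using the unimodularity of $\ol N$ to absorb the right translation by $\delta_s(\ol w)$ in the argument of $\mathfrak h$, so that the $\ol m$-integral is dominated by $\mathfrak F(s)\,\mathfrak H(s+r)$ with $\mathfrak F,\mathfrak H$ as in \eqref{eqn:def_of_F_and_H}.

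At this point the structural simplification over $\mathfrak{E}^-$ appears: because $r\ge 0$, I can insert the explicit expansion $[\ol w a_r]^+ = r + H(\ol w) + E(\ol w,r)$ from \eqref{eqn:expression_of_[va_r]} directly, with no recourse to the evenness of the Abel transform. Regrouping the exponentials, the Haar weight $e^{2\rho(s+r)}$ of $\ol N A$ combines with the decay $e^{-\frac{2\rho}{p}[\ol w a_r]^+}$ to produce exactly the $L^p$-normalized weight $e^{\frac{2\rho}{p}s}$ attached to $\mathfrak F(s)$, the $L^{p'}$-normalized weight $e^{\frac{2\rho}{p'}(s+r)}$ attached to $\mathfrak H(s+r)$, and the factor $e^{-\frac{2\rho}{p}H(\ol w)}$ for the $\ol w$-integral. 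The remaining $r$-dependent exponential cancels identically (using $2-\tfrac2p=\tfrac{2}{p'}$), so, unlike the $\mathfrak{E}^-$ case, no exponential decay in $r$ survives and the sole $r$-decay is the polynomial factor $([\ol w a_r]^+)^{-l}$.

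To finish, I would split the $r$-integral at $r=1$. Since $\eta$ is supported where $[\ol w a_r]^+\ge 1$, on $0\le r\le 1$ I bound $([\ol w a_r]^+)^{-l}\le 1$ and integrate over the finite range, while on $r\ge 1$ the inequality $[\ol w a_r]^+\ge r$ gives $([\ol w a_r]^+)^{-l}\le r^{-l}$, which is integrable as $l>1$. For each fixed $r$, Hölder's inequality in $s$ bounds $\int_\R \mathfrak F(s)e^{\frac{2\rho}{p}s}\,\mathfrak H(s+r)e^{\frac{2\rho}{p'}(s+r)}\,ds$ by $\|\mathfrak f\|_{L^p(\ol N A)}\|\mathfrak h\|_{L^{p'}(\ol N A)}$, and the $\ol w$-integral $\int_{\ol N}e^{-\frac{2\rho}{p}H(\ol w)}\,d\ol w=\int_{\ol N}P(\ol w)^{2/p}\,d\ol w$ is finite by \eqref{eqn:L1_norm_P(v-) (1+epsilon)_is_finite} precisely because $2/p>1$, which is the point where the restriction $p<2$ is used. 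Letting $\epsilon\to0$ then yields \eqref{eqn:est_of_E_0^+}. I expect the main obstacle to be exactly the absence of any exponential decay in $r$: convergence rests entirely on the polynomial gain $r^{-l}$ from integration by parts together with the $\eta$-support removing the singularity at $r=0$, and some additional care is needed to ensure the shifted contour stays inside the strip $0\le\Im\lambda\le\rho_p$ when $[\ol w a_r]^+$ is close to its minimum.
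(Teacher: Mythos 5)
Your proposal is correct and follows essentially the same route as the paper's proof: contour shift to $\R+i(\rho_p-\rho/[\ol w a_r]^+)$, integration by parts $l>(d+1)/2$ times combined with Lemma \ref{lemma:estimate_of_sigma_(lambda)_(c(-lambda)_e(-epsilo^2_lamda))}, H\"older's inequality to reduce to $\mathfrak F(s)\mathfrak H(s+r)$, direct substitution of $[\ol w a_r]^+=r+H(\ol w)+E(\ol w,r)$, a split of the $r$-integral at $r=1$, and \eqref{eqn:L1_norm_P(v-) (1+epsilon)_is_finite} for the $\ol N$-integral. Your observations that no exponential decay in $r$ survives (in contrast to $\mathfrak{E}^-$) and that the evenness of the Abel transform is not needed here are accurate refinements of what the paper does implicitly.
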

\begin{proof}
	 By moving  the integration with respect to $ \lambda $ in \eqref{eqn:final_defn_of_E_0^+}  from $ \R $ to the line $ \R+ i (\rho_p-\rho/ [\ol w a_{-r}]^+) $, it follows
	 \begin{equation*}\begin{split}
	 		\left\langle\mathfrak{E}^{+}_{0,\epsilon} \mathfrak{f}, \mathfrak{h} \right\rangle&= \int_{r=0}^\infty\int_{\ol  N}\int_{\ol N} \int_\R  \mathfrak{f}(\overline{m}a_s)\mathfrak{h}(\overline{m} \, \delta_s(\ol w) \,a_{s+r})  \eta( \ol w a_r)\\
	 		&\hspace*{-1.2cm}\cdot   \left(\int_{\R}\vartheta_{\epsilon}(\overline{ m} \, \delta_s (\ol w)\, a_{s+r},\lambda+i(\rho_p -\frac{\rho}{[\ol w a_{r}]^+})) \,e^{(i\lambda-\frac{2\rho}{p} +\frac{\rho}{[\ol w a_{r}]^+})[\ol w a_{r}]^+}   \, d\lambda\right) e^{2\rho(s+r)}  ds\,d\ol{m} \,d\ol w\,  dr\,  .
	 	\end{split}
	 \end{equation*}
 Now by using integration by parts and Lemma  \ref{lemma:estimate_of_sigma_(lambda)_(c(-lambda)_e(-epsilo^2_lamda))}, we get
 
 \begin{align*}
 	\lim\limits_{\epsilon \rightarrow 0}\left| \int_{\R}\vartheta_{\epsilon}(\overline{ m} \, \delta_s (\ol w)\, a_{s+r},\lambda+i (\rho_p-\frac{\rho}{[\ol w a_{r}]^+})) \,e^{i\lambda [\ol w a_{r}]^+}   \, d\lambda\right| & \leq \frac{C }{\left( [\ol w a_r]^+ \right)^{l}}\int_{\R} \frac{d\lambda}{(1+|\lambda|)^{l-\frac{d-1}{2}}}.
 \end{align*}
By taking $ l> \frac{d+1}{2} ,$  and substituting the expression $ [\ol w a_r]^+ $, it follows

\begin{equation}\nonumber\begin{aligned}
		\lim\limits_{\epsilon \rightarrow 0}	&\left| \left\langle\mathfrak{E}^{+}_{0,\epsilon} \mathfrak{f}, \mathfrak{h} \right\rangle\right|\\
		&\leq C \int_{r=0}^1 \int_{\R} \int_{\ol N}  \mathfrak{F}(s)\, e^{\frac{2\rho s}{p}}\, \mathfrak{H}(s+r)  \, e^{\frac{2\rho( s+r)}{p'}} \eta (\ol w a_r)\frac{e^{-\frac{2}{p}\rho H(\ol w)}  }{(r+ H(\ol w)+E(\ol w,r ))^{l}}d\ol w  \, ds\, dr\\
		&  + C \int_{r=1}^\infty \int_{\R} \int_{\ol N}  \mathfrak{F}(s)\, e^{\frac{2\rho s}{p}}\, \mathfrak{H}(s+r)  \, e^{\frac{2\rho( s+r)}{p'}} \eta(\ol w a_r) \frac{e^{-\frac{2}{p}\rho H(\ol w)}  }{(r+ H(\ol w)+E(\ol w,r ))^{l} }\, d\ol w  \, ds\, dr\\
		& \leq C  \| \mathfrak f\|_{L^p(\ol N A)} \,  \|\mathfrak  h\|_{L^{p'}(\ol N A)}. 
	\end{aligned}
\end{equation}
In the last inequality, we used H\"older's inequality along with \eqref{eqn:L1_norm_P(v-) (1+epsilon)_is_finite}. This completes the proof of the lemma and consequently  the proof of Theorem \ref{thm:pdo_for_sym_space}.

\end{proof}

\begin{remark}
	\begin{enumerate}
		\item  The proof of Lemma \ref{lemma:est_of_I^{1,epsilon}_{1}} and \ref{lemma:est_of_E_0^+}, for $ p>2 $ case is similar in spirit. Indeed one   needs to utilize  the fact that $ |(2/p-1) |=(1-2/p') $ for all $ p>2 $. Then those two lemmas (Lemma \ref{lemma:est_of_E_0^+} and \ref{lemma:est_of_I^{1,epsilon}_{1}}) will give us  Theorem \ref{thm:Lp_boundedness_of_T_sigma_global} and since Theorem \ref{thm:Lp_boundedness_of_T_sigma_local} is true for all $  p\in (1,\infty)$, we will finally have Theorem \ref{thm:pdo_for_sym_space} for all $p\in (1,2)\cup (2,\infty) $.
		\item 
		We observe from the proof of the global part, it follows that if we assume only
		\begin{equation}\label{eqn:hypothesis_of_sigma-global}
			\left| \frac{\partial^\alpha}{\partial \lambda ^\alpha} \sigma (x,\lambda)\right| \leq  {C_{\alpha}}{(1+|\lambda| )^{-\alpha}},
		\end{equation}
		for all $ \alpha=0,1,...\left[\frac{d+1}{2}\right]+1 $, $ x\in G$ and $ \lambda \in S_p $,
		in addition with holomorphicity condition on $S_p^\circ$, etc.		
		then the operator $ \Psi_\sigma^{glo} $ extends to a bounded operator on $ L^p{(\mathbb X)} $ to itself. That is, we do not need derivative condition on $x$ variable. The derivative condition on $x$ variable is required only to conclude the boundedness of the local part  $\Psi_\sigma^{loc}$ which follows from the boundedness of Euclidean pseudo differential operators. 
		\item  It is a natural question and perhaps an interesting problem to ask for $L^p$ boundedness of $\Psi_\sigma$  on $\X$ if $ \sigma $ is allowed to have singularities on boundary. We recall that on $\R^n$ this is not possible.
		\item Suppose $ \sigma:\X\times \C\rightarrow \C $ be a smooth function such that for each $x\in\X$, $\lambda\mapsto \sigma(x, \lambda)$ is an even, holomorphic function on $S_p^\circ$ and  satisfies \eqref{eqn:hypothesis_of_sigma} for some  $p\in (1, 2)$, that is \begin{equation}\label{eqn:hypothesis_of_sigma_on_L2}
			\left| \frac{\partial^\beta}{{\partial s}^\beta}\frac{\partial^\alpha}{\partial \lambda ^\alpha} \sigma (xa_s,\lambda)\right| \leq  {C_{\alpha,\beta}}{(1+|\lambda| )^{-\alpha}},
		\end{equation}
		for all $ \alpha=0,1,\cdots, \left[\frac{d+1}{2}\right]+1 ,\beta=0,1,2 $, $ x\in G, s\in \R$ and $ \lambda \in S_p $ for some  $p\in (1, 2)$.			
		Then the operator $ \Psi_\sigma $ extends to a bounded operator on $ L^p{(\mathbb X)} $ to itself and on $ L^{p'}(\X) $ to itself. By interpolation, the operator $ \Psi_\sigma $ is bounded on $ L^2 (\X)$ to itself.  Therefore, if $ \lambda\rightarrow \sigma(x,\lambda) $ is holomorphic and satisfy the inequalities \eqref{eqn:hypothesis_of_sigma-global}  on a strip, then $ \Psi_{\sigma} $ is bounded on $ L^2(\X) $. But the condition on the symbol $\sigma$ above is not natural  for  the $ L^2 $ boundedness of $ \Psi_\sigma $.  In the next section, we prove the $ L^2 $ boundedness of $ \Psi_\sigma$ in the $ K $-biinvariant setting,  assuming only smoothness condition (without holomorphicity) of the symbol $ \sigma $ on the real line. 
	\end{enumerate}
\end{remark}


\section{\textbf{$ L^2 $ boundedness for  symmetric spaces  in $ K $ -biinvariant cases}}\label{L2 theorems proof}

In this section we shall prove the $L^2$ boundedness of pseudo differential operator associated with a symbol $\sigma$ in the $K$-biinvariant setting. We recall that 
 $ \X=G/K $ is a rank one symmetric space with dim $ \X =d$. 
\begin{proof}[\textbf{Proof of the Theorem \ref{thm_for_L2_cap_Lp_to_Lp_K_biinv}}]: Let $ \eta:[0, \infty)\rightarrow [0, 1]$ be a smooth cut off function supported in $[1, \infty)$ and equal to $1$ on $[2, \infty)$.  Using  Proposition \ref{propn:global_expansion_of_phi_lambda}, (\ref{prop:abelslice}) and evenness of  $ \lambda\mapsto \sigma(\cdot, \lambda) $ we  write,
\begin{align*}
	\eta(t) \Psi_\sigma f(a_t) e^{\rho t }& =2\int_{\R} \eta(t) \frac{\sigma(t,\lambda)}{c(-\lambda)} e^{i\lambda t}  \, \what {f} (\lambda) d\lambda + 2\int_{\R} \eta(t) \frac{\sigma(t,\lambda) a(t,\lambda)}{c(-\lambda)} e^{i\lambda t} \, \what {f} (\lambda) d\lambda \\
	& =2\int_{\R} \eta(t) \frac{\sigma(t,\lambda)}{c(-\lambda)} e^{i\lambda t}  \, \mathcal F \mathcal Af (\lambda) d\lambda + 2\int_{\R} \eta(t) \frac{\sigma(t,\lambda) a(t,\lambda)}{c(-\lambda)} e^{i\lambda t} \, \mathcal F \mathcal Af (\lambda) d\lambda \\
	&= \tilde{\Psi}_{a_1} (\mathcal{A} f)(t) + \tilde{\Psi}_{a_2}(\mathcal{A}f )(t),
\end{align*}
where \begin{equation}\label{defn:a_1}
	 a_1(t,\lambda) =2 \eta(t) \frac{\sigma(t,\lambda)}{c(-\lambda)} \, \text{ and }\,  a_2(t,\lambda) =2 \eta(t) \frac{\sigma(t,\lambda)a(t,\lambda)}{c(-\lambda)},
\end{equation}
and $\tilde{\Psi}_{a_1}, \tilde{\Psi}_{a_2}$ are the Euclidean pseudo differential operator associated with symbols $a_1, a_2$ respectively.
Now from \eqref{eqn:condn_of_sigma_for_real_rank_one_L2 case},\eqref{eqn:est_for_a(lambda,r)} and (\ref{est: cminusla})  it follows that $ a_1, a_2: \R\times \R\rightarrow \C  $ are smooth functions  and satisfies the following inequalities:
\begin{align}
	 	\left|\frac{\partial^\beta}{{\partial t}^\beta} \, \frac{\partial^\alpha}{{\partial \lambda}^\alpha} a_{i}(t,\lambda) \right| \leq C_{\alpha,\beta} (1+|\lambda|)^{-\alpha}, 
\end{align}
for all $ \lambda \in \R$, $ \alpha ,\beta \in \Z^+ $ and $ i=1,2. $
Using the results of pseudo differential operators in Euclidean spaces  (Theorem \ref{thm:pdo_on_Rn}), it follows that $  \tilde{\Psi}_{a_i} $ extends to a bounded operator from $ L^2(\R)$ to itself.  Therefore for $ f \in C_c^\infty(G//K) $,
\begin{align}\label{eqn:ineq_||T_sigma_f||_2_leq_||Af||_2}
	\left(\int_{2}^{\infty} |\Psi_\sigma (f)(a_t)|^2 e^{2\rho t} dt\right)^{\frac{1}{2}} &\leq \left(\int_{1}^{\infty}|\eta(t) \Psi_\sigma (f)(a_t)|^2 e^{2\rho t} dt\right)^{\frac{1}{2}} 
	\leq  C \|\mathcal{A}f\|_{L^{2}(\R)} .
\end{align}
Now  we choose $ M> 0 ,$ such that  $ |c(\lambda)|^{-2} \geq 1 $ for all $ |\lambda| \geq M $. Then using the Euclidean Plancherel theorem and (\ref{prop:abelslice}), we have  \begin{align*}
	\|\mathcal{A}f\|_{L^{2}(\R)}^2 &=  \int_{\R}|\what f(\lambda)|^2 \, d\lambda  =  2 \int_{0}^M|\what f(\lambda)|^2 \, d\lambda  + 2 \int_{M}^\infty|\what f(\lambda)|^2 \, d\lambda \\
	& \leq C\|f\|_{L^p(\X)}^2 +\int_{M}^\infty|\what f(\lambda)|^2 |c(\lambda)|^{-2} \, d\lambda \\
	& \leq C \left(\|f\|_{L^p(\X)}^2 + \|f\|_{L^2(\X)}^2\right),
\end{align*}
where we used the fact \[   |\what f(\lambda) | \leq \|f\|_{L^p(\X)} \|\phi_{0}\|_{L^{p'}(\X)}, \] for all  $ \lambda \in \R   $  (see \cite[Propostion 2.1]{PRS}).
Hence  we get 
\begin{equation}
\|\mathcal{A}f\|_{L^{2}(\R)} \leq  C \left(\|f\|_{L^p(\X)} + \|f\|_{L^2(\X)}\right).
\end{equation}
Therefore from \eqref{eqn:ineq_||T_sigma_f||_2_leq_||Af||_2}  we have, 
\begin{equation}\label {eqn:est_||T_sigmaf||_2_leq_||f||_p+||f||_2_away_from_0}
	\left(\int_{2}^{\infty} |\Psi_\sigma (f)(a_t)|^2 e^{2\rho t} dt\right)^{\frac{1}{2}} \leq  C \left(\|f\|_{L^p(\X)} + \|f\|_{L^2(\X)}\right).
\end{equation}
Next from (\ref{eqn:defn_of_Tsigmaf_for_K_biinv_case_real_sym_space}), (\ref{eqn:condn_of_sigma_for_real_rank_one_L2 case})  and Lemma \ref{estimate of |c(-lambda)|^-2}  we get
\begin{align}\label{eqn:est_T_sigma(at)_near_t=0}
	|\Psi_\sigma(f)(a_t) | \leq  \|f\|_{L^{p}(\X)}\| \|{\phi_0}\|_{L^{p'}(\X|)} \int_{\R}| \sigma(t,\lambda)| \, |c(\lambda)|^{-2} d\lambda \leq C \|f\|_{L^{p}(\X)}.
\end{align}
  Therefore from \eqref {eqn:est_||T_sigmaf||_2_leq_||f||_p+||f||_2_away_from_0} and \eqref{eqn:est_T_sigma(at)_near_t=0} we get
\begin{equation*}
\|\Psi_\sigma f\|_{L^2(\X)} \leq  C \left(\|f\|_{L^p(\X)} + \|f\|_{L^2(\X)}\right) \text{ for any } 1\leq p<2.
\end{equation*}
\end{proof}

\begin{proof} [\textbf{Proof of Theorem \ref{thm:pdo_real_sym_space_K_biinv_2<p<3}:}]
Let $ \eta:[0, \infty)\rightarrow [0, 1]$ be a smooth cut off function supported in $[1, \infty)$ and equal to $1$ on $[2, \infty)$. 	Suppose $ 2<q<3 $.  Then from \eqref{eqn:defn_of_Tsigmaf_for_K_biinv_case_real_sym_space}, (\ref{prop:abelslice}) and   Proposition \ref{propn:global_expansion_of_phi_lambda} we have 
\begin{equation}\label{eqn: expression-pgreat2}
	\begin{aligned}
	\eta(t) \Psi_\sigma (f)(a_t) e^{\frac{2\rho}{q}t}& = e^{(\frac{2}{q}-1)\rho t}\int_\R \frac{2\eta(t) \sigma(t,\lambda)}{c(-\lambda)} e^{i\lambda t} \mathcal F \mathcal{A}f (\lambda) d\lambda\\
	 &+ e^{(\frac{2}{q}-1)\rho t}\int_\R \frac{2\eta(t) \sigma(t,\lambda) e^{-2t } a(t,\lambda))}{c(-\lambda)} e^{i\lambda t} \mathcal F \mathcal{A}f(\lambda) d\lambda\\
	 & = e^{(\frac{2}{q}-1)\rho t} {\tilde \Psi_{a_1}(\mathcal{A}f)(t)} +e^{(\frac{2}{q}-1)\rho t} {\tilde \Psi_{a_2}(\mathcal{A}f)(t)},
\end{aligned}
\end{equation}
where $a_1, a_2$ are defined in (\ref{defn:a_1}) and $\tilde{\Psi}_{a_1}, \tilde{\Psi}_{a_2}$ are the Euclidean pseudo differential operator associated with symbols $a_1, a_2$ respectively.
Since $ q>2 $, from above it follows that   \begin{eqnarray*}
\left(\int_{1}^{\infty}|\eta(t) \Psi_\sigma (f)(a_t)|^q e^{2\rho t} dt\right)^{\frac{1}{q}} \leq \|  {\tilde \Psi_{\sigma_1}(\mathcal{A}f)}\|_{L^q(\R)}+   \| {\tilde \Psi_{\sigma_2}(\mathcal{A}f)}\|_{L^q(\R)}
\end{eqnarray*}
 Applying the result for pseudo differential operator in Euclidean case (Theorem \ref{thm:pdo_on_Rn} ) on $\tilde{\Psi}_{a_1} $ and $\tilde{\Psi}_{a_2} $ we get $\tilde{\Psi}_{a_i} :L^q(\R) \rightarrow L^{q}(\R) $ are bounded for $ i=1,2 $, and so
\begin{align}\label{eqn:||T_sigma_f||_p_leq_||Af||_p_away_zero}
	 \left(\int_{2}^{\infty} |\Psi_\sigma (f)(a_t)|^q e^{2\rho t} dt\right)^{\frac{1}{q}} &\leq \left(\int_{1}^{\infty}|\eta(t) \Psi_\sigma (f)(a_t)|^q e^{2\rho t} dt\right)^{\frac{1}{q}} 
	 \leq  C \|\mathcal{A}f\|_{L^{q}(\R)} 
\end{align}
Since $2<q<3$, we put  $ r=q $  and $p=q'$ in \eqref{eqn:result_of_Sk_ray_and_RP_sarkar}, to get
\begin{equation}\label{eqn:mapping-abel}
	 \left(  \int_{\R} |\mathcal{A}f(a_t)|^q   dt \right)^{\frac{1}{q}} \leq  C\|f\|_{L^{q'}(\X)}.
\end{equation}
Therefore from  \eqref{eqn:||T_sigma_f||_p_leq_||Af||_p_away_zero} and (\ref{eqn:mapping-abel}) we get,
\begin{equation}\label{ineq estimate of  integration of T sigma p global}
 \left(\int_{2}^{\infty} |\Psi_\sigma (f)(a_t)|^q e^{2\rho t} dt\right)^{\frac{1}{q}} \leq  C \|f\|_{L^{q'}(\X)}.
\end{equation}
Also from \eqref{eqn:defn_of_Tsigmaf_for_K_biinv_case_real_sym_space}, (\ref{eqn:condn_of_sigma_for_real_rank_one_L2 case}) and  (\ref{estimate of |c(-lambda)|^-2}) we have,
\begin{align*}
	 |\Psi_\sigma(f)(a_t) | \leq  \|f\|_{L^{q'}(\X)} \|{\phi_0}\|_{L^q(\X)} \int_{\R}| \sigma(t,\lambda)| \, |c(\lambda)|^{-2} d\lambda \leq C_p \|f\|_{L^{q'}(\X)}.
\end{align*}
This implies
\begin{align}\label{eqn:||T_sigma_f||_p_leq_||Af||_p_near_zero}
	\left(\int_{0}^{2} |\Psi_\sigma (f)(a_t)|^q e^{2\rho t} dt\right)^{\frac{1}{q}} \leq C \|f\|_{L^{q'}(\X)}.
\end{align}
Finally from \eqref{ineq estimate of  integration of T sigma p global} and  \eqref{eqn:||T_sigma_f||_p_leq_||Af||_p_near_zero} we have for $ 2<q<3 $,
\begin{equation}\label{eqn:est_||T_sigmaf||_p_from_Lp'}
	\|\Psi_\sigma (f)\|_{L^q(\X)} \leq C \|f\|_{L^{q'} (\X)}  .
\end{equation}
Now if we take $ 3/2<p<2 $, then $p=q'$ for some $ 2<q<3 $. So from \eqref{eqn:est_||T_sigmaf||_p_from_Lp'}  we have,
\begin{equation}
	 	\|\Psi_\sigma (f)\|_{L^{p'}(\X)} \leq C \|f\|_{L^{p} (\X)}  .
\end{equation}
Also  from Theorem \ref{thm:pdo_for_sym_space} we have \begin{equation}
	 	\|\Psi_\sigma (f)\|_{L^{p}(\X)} \leq C \|f\|_{L^{p} (\X)}  .
\end{equation}
Thus  by interpolation it follows that for all $ r$ with $ p\leq r\leq p' $  there is a $ C>0  $ such that 
\begin{equation}\label{est:est_||T-sigmaf||_q}
		\|\Psi_\sigma (f)\|_{L^{r}(\X)} \leq C \|f\|_{L^{p} (\X)}  .
\end{equation}
In particular $r=2$ gives,,
\begin{equation}
	 	\|\Psi_\sigma (f)\|_{L^{2}(\X)} \leq C \|f\|_{L^{p} (\X)}  .
\end{equation}
\end{proof}
From the proof of this theorem the following corollary follows:
\begin{corollary}
Let $\frac{3}{2}<p<2$ and $\sigma$ a symbol satisfies condition of Theorem \ref{thm:pdo_real_sym_space_K_biinv_2<p<3}. Then 
\bes
\|\Psi_\sigma(f)\|_{L^r(\X)}\leq C \|f\|_{L^{p}(\X)},
\ees
for $p\leq r\leq p'$.
\end{corollary}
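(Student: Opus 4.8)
The plan is to obtain the corollary by interpolating two endpoint estimates for the single linear operator $\Psi_\sigma$, both regarded as maps out of the fixed domain space $L^p(\X)$. Indeed, the inequality asserted here is precisely the intermediate bound \eqref{est:est_||T-sigmaf||_q} already produced in the course of proving Theorem \ref{thm:pdo_real_sym_space_K_biinv_2<p<3}, so no genuinely new analysis is required; the proof merely isolates that interpolation step.

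First I would record the diagonal endpoint. Since $\frac{3}{2}<p<2$ and $\sigma$ satisfies the hypotheses of Theorem \ref{thm:pdo_real_sym_space_K_biinv_2<p<3} --- which demand both more $\lambda$-derivatives and faster decay $(1+|\lambda|)^{-\alpha-d-1}$ than the condition \eqref{eqn:hypothesis_of_sigma} of Theorem \ref{thm:pdo_for_sym_space} --- the hypotheses of Theorem \ref{thm:pdo_for_sym_space} hold a fortiori (this is exactly the reduction already invoked inside the proof of Theorem \ref{thm:pdo_real_sym_space_K_biinv_2<p<3}). Hence Theorem \ref{thm:pdo_for_sym_space} gives
\bes
\|\Psi_\sigma f\|_{L^{p}(\X)} \leq C \|f\|_{L^{p}(\X)}.
\ees
For the off-diagonal endpoint I would invoke the principal conclusion established in the proof of Theorem \ref{thm:pdo_real_sym_space_K_biinv_2<p<3}, namely $\|\Psi_\sigma f\|_{L^{p'}(\X)} \leq C \|f\|_{L^{p}(\X)}$, which was obtained there through the slice-projection identity \eqref{prop:abelslice} for the Abel transform, the boundedness of the associated Euclidean pseudo differential operators (Theorem \ref{thm:pdo_on_Rn}), and the Ray--Sarkar estimate (Theorem \ref{thm:Ray-Sarkar}).

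With these two bounds in hand, I would apply the Riesz--Thorin interpolation theorem to $\Psi_\sigma$, freezing the domain exponent at $p$ and letting only the target exponent vary. Taking target exponents $q_0=p$ and $q_1=p'$, the interpolated exponent $r$ determined by $\tfrac{1}{r}=\tfrac{1-\theta}{p}+\tfrac{\theta}{p'}$ ranges over the whole interval $[p,p']$ as $\theta$ runs over $[0,1]$, which yields
\bes
\|\Psi_\sigma f\|_{L^{r}(\X)} \leq C \|f\|_{L^{p}(\X)}, \qquad p\leq r\leq p'.
\ees

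The only point requiring any attention --- and the nearest thing to an obstacle --- is the admissibility of interpolating on the target exponent alone with the domain held fixed; but this is merely the Riesz--Thorin theorem in the degenerate case $p_0=p_1=p$, so nothing substantive remains. All of the hard harmonic analysis (the $L^p\to L^{p'}$ bound via the Abel transform and Euclidean pseudo differential theory, together with the $L^p\to L^p$ bound of Theorem \ref{thm:pdo_for_sym_space}) has already been discharged, and the specialization $r=2$ recovers the $L^2$ statement of Theorem \ref{thm:pdo_real_sym_space_K_biinv_2<p<3} as a particular case.
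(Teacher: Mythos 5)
Your proposal is correct and coincides with the paper's own argument: the corollary is exactly the intermediate estimate \eqref{est:est_||T-sigmaf||_q} obtained in the proof of Theorem \ref{thm:pdo_real_sym_space_K_biinv_2<p<3} by interpolating the $L^p\to L^{p'}$ bound established there with the $L^p\to L^p$ bound from Theorem \ref{thm:pdo_for_sym_space}. Nothing further is needed.
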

In the proof of last two theorems,  we had to multiply the smooth cut off function $\eta$ (in first step) because we have the estimate of $a(t, \lambda)$ when $t$ is away from origin (see Proposition \ref{propn:global_expansion_of_phi_lambda}). \\

\noindent{\bf Complex symmetric space:} Let $\X$ be an arbitrary rank complex symmetric space. The elementary spherical functions on complex symmetric space $\X$ are given by the expression 
\begin{equation}\label{eqn:defn_of_phi_lambda(H)_in_complex_case}
	\phi_\lambda(H) =c(\lambda) \frac{\sum_{s\in W } \det (s) e^{ -is\lambda(H)}}{\phi(H)}, \, \text{ for } H\in \mathfrak{a}, \lambda \in \mathfrak{a}^*,
\end{equation}
where $ \phi{(H)} $ is defined by the formula
\begin{equation}\label{eqn:defn_of_phi(H)_complex_case}
	\phi(H) =\sum_{s\in W} \det (s) e^{s\rho(H)},\, \text{ for } H\in \mathfrak{a},
\end{equation}
(see \cite[p. 907 and p. 910] {cowling2001}). 
 It is known that the Haar measure on $ A $ corresponding to the polar decomposition of  $G  $ is given by $ \phi(H)^2 dH $ for $ H\in 
\mathfrak{a} $.

Thus the spherical transform of a suitable $K$-biinvariant function $f$ is given by \begin{equation}\label{eqn:defn_of_whatf_in_complex_lie_gorup}
	\what f(\lambda )= \int_{\mathfrak{a}} f(H) \phi_\lambda(H) \phi(H)^2 dH, \,{\text{ for }}\lambda \in \mathfrak{a}^*.
\end{equation}  
We define the $K$-biinvariant pseudo differential operator $\Psi_\sigma$ associated to $ \sigma $ by,
\begin{align}\label{defn:Tsigmacomplex}
	\Psi_\sigma (f)(H) &= \int_{\mathfrak{a}^*} \what f(\lambda) \sigma(H,\lambda) \phi_\lambda(H) |c(\lambda)|^{-2} d\lambda.
	\end{align}

Now we shall prove Theorem \ref{thm:pdo_complex-case}:

\begin{proof}[\textbf{Proof of the Theorem \ref{thm:pdo_complex-case}:}]
	From \eqref{eqn:defn_of_whatf_in_complex_lie_gorup} and \eqref{eqn:defn_of_phi_lambda(H)_in_complex_case} we have
	\begin{align*}\label{eqn:defn_of_whatf_in_complex_lie_gorup}
		\what f(\lambda )&=c(\lambda)\sum_{s\in W} \det(s)\int_{\mathfrak{a}} f(H)  \phi(H)  e^{-is\lambda(H)}   dH\\
		&=  c(\lambda)\sum_{s\in W} \det(s) \int_{\mathfrak{a}} f(H)  \phi(H) e^{-is\lambda(H)}dH\\
		&=  c(\lambda)\sum_{s\in W} \det(s) \mathcal F g(s \lambda), \, \text{ for } \lambda \in \mathfrak{a}^*,
	\end{align*} 
	where the function $ g  $  on $ \mathfrak{a} $ is defined as \begin{equation*}
		g(H) =f(H) \phi(H) , \,  \text{ for }H\in \mathfrak{a},
	\end{equation*}
	and $ \mathcal F g $ denotes the Euclidean Fourier transform of $ g $.  Since $ \phi $ satisfies
	 \bes \phi(sH) = \det (s) \phi(H),
	 \ees for $ H\in \mathfrak{a}, s\in W$, we have \begin{equation}
		g(sH) =\det(s)g(H),\, \text{ for } H\in \mathfrak{a}, s\in W.
	\end{equation}
Therefore,   \begin{equation}\label{eqn:antisymmetrytildeg}
		\mathcal F g(s\lambda) =\det (s) \,\mathcal Fg(\lambda), \, \text{ for } \lambda \in \mathfrak{a}^*, s\in W.
	\end{equation}
	Consequently the spherical transform of $ f $ can be written as
	\begin{equation}
		\what f(\lambda)=c(\lambda) |W| \, \mathcal F g(\lambda), \, \text{ for } \lambda \in \mathfrak{a}^*.
	\end{equation}
	Now from the definition (\ref{defn:Tsigmacomplex}) of $ \Psi_\sigma $ we get,
	\begin{align*}
		\Psi_\sigma (f)(H) &= \int_{\mathfrak{a}^*} \what f(\lambda) \,\sigma(H,\lambda) \phi_\lambda(H) |c(\lambda)|^{-2} d\lambda\\
&= \int_{\mathfrak{a}^*} \what f(\lambda)\, \sigma(H,\lambda) \phi_{-\lambda}(H) |c(\lambda)|^{-2} d\lambda\\		
		& =|W| \int_{\mathfrak{a}^*}  \mathcal F g(\lambda) c(\lambda)  \sigma(H,\lambda) c(-\lambda)  \frac{\sum_{s\in W } \det (s) e^{ is\lambda(H)}}{\phi(H)} |c(\lambda)|^{-2} d\lambda\\
		& =\frac{|W|}{\phi(H)} \sum_{s\in W } \det (s) \int_{\mathfrak{a}^*}  \mathcal F g(\lambda)  \sigma(H,\lambda)  e^{ is\lambda(H)}  d\lambda.
	\end{align*}
Now by the change of variable $\lambda\mapsto s^{-1}\lambda$ and (\ref{eqn:antisymmetrytildeg}) we get
\bes
\phi(H)\Psi_\sigma (f)(H)=|W|^2\int_{\mathfrak{a}^*}\mathcal F g(\lambda)   \sigma(H,\lambda)  e^{ i\lambda(H)}  d\lambda.
\ees 
That is,  that \begin{equation}
		\Psi_\sigma (f)( H) \, \phi(H) =|W|^2\widetilde \Psi_{\sigma}(g)(H),
	\end{equation}
where $\widetilde{T}_\sigma$ is the Euclidean pseudo differential operator associated with the symbol $\sigma(H, \lambda)$.
Using Calder\'on and Vaillancourt theorem \cite[Theorem 5.1]{Tosio76}, we have $ \widetilde{\Psi}_{\sigma}: L^2\rightarrow L^2 $ bounded.  Therefore we have
	\begin{equation}\nonumber
		\int_{\mathfrak{a}}|\Psi_\sigma(f)(H)|^2 \phi(H)^2dH = |W|^4\int_{\mathfrak{a}}| \widetilde{\Psi}_{\sigma}(g)(H)| ^2 dH \leq C \int_{\mathfrak{a}} |g(H)|^2 dH = \int_{\mathfrak{a}} |f(H)|^2 \phi(H)^2 dH.
	\end{equation} 
	This shows that 
	\bes
	\|\Psi_\sigma(f)\|_{L^2(G//K)} \leq C\, \|f\|_{L^2(G//K)}.
	\ees
	This completes the proof.
\end{proof}

\section{appendix}\label{appendix}

\subsection{Coifman-Weiss transference method} (\cite{coifman 73}, \cite{Coifman 77}):
		Suppose $ {G} $ is a locally compact group satisfying the following property: Given a compact subset  $ B $ of $ G $ and $ \epsilon >0  $, there exists an open neighborhood $ V $ of identity having finite measure such that \begin{equation*}
			 \frac{\mu\left( VB^{-1}\right)}{\mu (V)} \leq 1+\epsilon,
		\end{equation*}
where $ \mu $ is a left Haar measure. Any locally compact abelian group satisfies this property. Let $ \mathcal{R} $ be a representation of $ G $ acting on functions on a $ \sigma $ finite measure space $ \mathcal{X} $ satisfying, for some $ p\in [1,\infty] $ 
\begin{equation}
	 \int_{\mathcal{X} } |(\mathcal{R}_u f) (x)|^p d\mu(x) \leq C   \int_{\mathcal{X} } |f (x)|^p d\mu(x),
\end{equation}
where $ C $ is independent of $ f \in L^p(\mathcal{X}) $ and $ u\in G $. Consider the  transformation of  the following form
\begin{equation}\label{eqn:reprn_of_T_using_K}
	 \mathcal{T} f(x) = \int_{G}   \mathfrak{K} (x,R_u x,u ) f(\mathcal{R}_u x) d\mu(u),
\end{equation}
			where $ \mathfrak{K} (x,y,u) $ is a measurable function  on $ \mathcal{X} \times \mathcal{X} \times G $, which is zero if $ u $ does not belong to a compact set $ B\subset G $. Moreover for each $ x \in \mathcal{X}   $, the kernel 
			\begin{equation*}
				 \mathfrak{K}_x (u,v) = \mathfrak{K}(\mathcal{R}_v x, \mathcal{R}_{u^{-1}}\mathcal{R}_v x, u)=\mathfrak{K}(\mathcal{R}_v x, \mathcal{R}_{u^{-1}v}x, u)
			\end{equation*}
		satisfies
		\begin{equation}\label{eqn:req_est_for_pi}
			 \left( \int_G \left|  \int_G  \mathfrak{K}_x(v,u) g (u^{-1}v ) d\mu(u) \right|^p d\mu(v)  \right)^{\frac{1}{p}} \leq A \left(\int_G  \left| g(u)\right|^p d\mu(u)\right)^{\frac{1}{p}},	
		 \end{equation}
	 where $ A $ is independent of $ x \in \mathcal{X} $ and $ g\in L^p(G) $. Then  $ \mathcal{T} $  is a bounded operator on $ L^p(\mathcal{X}) $ with norm not exceeding $ A $ (\cite[(2.7)]{Coifman 77}):
	 \begin{equation}\label{eqn:Lp_est_for_Pi}
	  \left(\int_{\mathcal{X}}  \left| \mathcal{T} f(x)\right|^p dx\right)^{\frac{1}{p}}	  \leq A \left(\int_{\mathcal{X} } \left| f(x)\right|^p dx\right)^{\frac{1}{p}}.
	 \end{equation}
 \section{Open Problem}
 	It will be interesting  to get an exact analogue of $L^2$ boundedness of pseudo differential operators on rank one symmetric spaces and also $L^p$ boundedness of pseudo differential operators on (arbitrary rank) symmetric spaces of noncompact type. This will be taken as our future project.

\noindent{\bf Acknowledgement:} We are thankful to Mithun Bhowmik, Rahul Garg and Pratyoosh Kumar for many helpful discussions on the problem. 

\end{document}